\numberwithin{equation}{section}
\theoremstyle{plain}
\newtheorem{theorem}{Theorem}
\numberwithin{theorem}{section}
\newtheorem{lemma}[theorem]{Lemma}
\newtheorem{proposition}[theorem]{Proposition}
\theoremstyle{definition}
\newtheorem{definition}[theorem]{Definition}
\newtheorem{remark}[theorem]{Remark}
\newtheorem{assumption}[theorem]{Assumption}
\renewcommand\d{\partial}
\newcommand\bs[1]{\boldsymbol{#1}}
\newcommand\dd{\mathrm{d}}
\newcommand\Pb{\mathbb{P}}
\newcommand\Eb{\mathbb{E}}
\newcommand\wtilde{\widetilde}
\newcommand\what{\widehat}
\newcommand\N{\mathbb{N}}
\newcommand\R{\mathbb{R}}
\newcommand\cS{\mathcal{S}}
\newcommand\cX{\mathcal{X}}
\newcommand\cF{\mathcal{F}}
\newcommand\cV{\mathcal{V}}
\newcommand\bL{\mathbf{L}}
\newcommand\bK{\mathbf{K}}
\newcommand\bI{\mathbf{I}}
\newcommand\bE{\mathbf{E}}
\newcommand\bA{\mathbf{A}}
\renewcommand{\(}{\left(}
\renewcommand{\)}{\right)}
\renewcommand{\[}{\left[}
\renewcommand{\]}{\right]}
\begin{document}

\title{Bootstrap Percolation in Random Graphs of Unbounded Rank}

\author{
Nils Detering\thanks{
    Heinrich Heine University D\"usseldorf.
    (E-mail: \href{mailto:nils.detering@hhu.de}{nils.detering@hhu.de}).
}
\and
Jimin Lin\thanks{
    University of California, Santa Barbara.
    (E-mail: \href{mailto:jiminlin@pstat.ucsb.edu}{jiminlin@pstat.ucsb.edu}).
}
}
\date{}
\maketitle

\begin{abstract}
Bootstrap percolation in (random) graphs is a contagion dynamic among a set of vertices with certain threshold levels. The process is started by a set of initially infected vertices, and an initially uninfected vertex with threshold $k$ gets infected as soon as the number of its infected neighbors reaches $k$. This process has been studied extensively in \textit{rank one} models. These models can generate random graphs with heavy-tailed degree sequences but they are not capable of generating networks with a flexible stochastic block structure. In this paper, we treat a class of random graphs of unbounded rank that can generate flexible stochastic block structures. Our main result determines the limit in probability of the final fraction of infected vertices from the fixed point of a non-linear operator defined on a suitable function space. We propose a neural network based algorithm to calculate this fixed point efficiently. We further derive criteria based on the Fr\'echet derivative of the operator that allow one to determine whether small infections spread through the entire graph or rather stay local.
\end{abstract}

\section{Introduction}
\textit{Bootstrap percolation} models the spread of some activation or infection among a set of vertices. It has been studied on structures such as trees, (random) graphs, and lattices, among others. Common to all of these studies is the specification of a local rule according to which the infection spreads locally between neighbors, and one is then interested in understanding the process on a global level. The most classical setting is possibly that of $k$-threshold percolation, where the process starts with a set of initially infected vertices and subsequently a vertex gets infected as soon as $k$ of its neighbors are infected. Bootstrap percolation has been used to study the demagnetization in magnetic materials (\cite{chalupa_bootstrap_1979}), impulses in brain neuronal networks (\cite{tlusty_remarks_2009}, \cite{goltsev_stochastic_2010}), storage failure in computer networks (\cite{kirkpatrick_percolation_2002}), contagion in financial networks (\cite{Cont2016,Detering2016}), and the spreading of disease (\cite{HURD202175}).

Bootstrap percolation in random graphs was first investigated in \cite{balogh_bootstrap_2007} and \cite{fontes_bootstrap_2008} for the configuration model where a phase transition of the $k$-threshold percolation in a $d$-regular graph was derived. More general results for graphs with arbitrary degree distributions were obtained in \cite{baxter_bootstrap_2010} and \cite{amini_bootstrap_2010}. Bootstrap percolation in the Erd\H{o}s R\'enyi $G_{n,p}$ model was explored comprehensively in \cite{janson_bootstrap_2012}. The authors of \cite{turova_bootstrap_2015} found a narrower critical window for phase transition when a one-dimensional lattice was added to the $G_{n,p}$ graph. A variant of bootstrap percolation in the $G_{n,p}$ model that considered the synchronous and asynchronous percolation processes with inhibitory and excitatory vertices was studied recently in \cite{einarsson_bootstrap_2019}. For bootstrap percolation in the inhomogeneous Chung-Lu random graph model with a power-law degree distribution, a threshold function for the number of initially infected vertices has been derived in \cite{amini_bootstrap_2014}. The threshold function ensures that a positive fraction of vertices becomes infected. The authors of \cite{fountoulakis2018phase} determine weight sequences for which such a critical phenomenon occurs. The size of the final set of infected vertices triggered by a fixed proportion of initially infected vertices is determined in \cite{articleAmini}. In a directed Chung-Lu model with heterogeneous thresholds and initial infections, the paper \cite{meyer-brandis_bootstrap_2019} studies the size of the final set of infected vertices and, in some cases, lower bounds that do not depend on the magnitude of the initial infection. For random geometric graphs, in \cite{bradonjic_bootstrap_2014}, a critical point of the first order phase transition of bootstrap percolation is determined and in  \cite{falgas2016bootstrap} situations are studied where local infection spreads globally. The work \cite{janson_modified_2019} presents a modified non-monotone bootstrap percolation which allows infected vertices to recover. The influence of the underlying geometry of inhomogeneous geometric graphs on the percolation speed was studied in the recent work \cite{koch_bootstrap_2016}.

A majority of the random graph models mentioned above, including the Erd\H{o}s R\'enyi model, the Chung-Lu model, and certain geometric models, are \textit{rank one} random graphs. They can generate heterogeneous degrees while still being very tractable in analytical terms. They are a subset of a more general model introduced in \cite{bollobas_phase_2007}. In this general model, every vertex has a type $x\in \cS$ from some type space $\cS$, and the probability of connection between two vertices of types $x$ and $y$ is given by $\kappa (x,y) /n$, where $\kappa : \cS\times \cS\rightarrow \mathbb{R}_+$ is called the kernel function. The rank one models are those specifications where $\kappa$ is of the special form $\kappa (x,y) = \phi (x) \phi (y)$ for some function $\phi : \cS\rightarrow \mathbb{R}_+$, and variants thereof. The name originates from the fact that, in this case, for a graph with $n$ vertices and types $x_i, i\in [n]$, the matrix of connection probabilities has rank one and is given by $\Phi \cdot \Phi^T$ where $\Phi=(\phi (x_1), \dots ,\phi(x_n))^T$. Here $A^T$ denotes the transpose of the vector (matrix) of $A$. For the random graph models of rank one, it turns out that the final set of infected vertices can actually be determined by solving an one-dimensional fixed point equation (\cite{articleAmini,meyer-brandis_bootstrap_2019}). In \cite{Detering2018} a model with finite rank $K$ has been proposed that maintains the multiplicative structure of the connection probabilities in the rank one models. In this case, the resulting fixed point equations are multidimensional. In \cite{torrisi2022bootstrap} a model with rank $2$ has been considered in a setting where the number of edges is of order larger than $n$.

Despite the capacity to capture inhomogeneity, a shortcoming of the finite rank models is their inability to build a flexible block model structure. However, it is well known that in most real networks one observes a block structure where often the geographical location determines the membership in a certain block. One example is the worldwide interbank lending network. The intra-country connection between core banks and local banks can be large in some countries, but the inter-country connection between the core banks and foreign local banks is usually much smaller. Another example is social networks where the probability of connections between individuals strongly depends on the location of residence. To alleviate this neglect of the observed structural phenomena, one straightforward approach is to adopt a more flexible kernel function for generating the connection probability, which, in the matrix analogy, corresponds to using a matrix with unbounded rank for the connection probabilities.

In this paper, we therefore study a bootstrap percolation process in inhomogeneous random graphs with connection probabilities described by a general kernel function $\kappa$. It turns out that in this general case the analytic treatment becomes inherently infinite dimensional in contrast to all special cases treated earlier. This leads to new technical challenges that require a new set of approximation results. We derive a space $\mathcal{F}_b$ of real-valued functions defined on $\cS$ and a non-linear operator $\Psi_{\kappa}: \mathcal{F}_b \rightarrow \mathcal{F}_b$ that allow us to determine the limit in probability of the result of the bootstrap percolation process for large $n$. More precisely, if $\hat{f} \in \mathcal{F}_b$ is the pointwise least fixed point of the operator $\Psi_{\kappa}$ (i.e. $\Psi_{\kappa}(\hat{f})=\hat{f}$) and the function $\hat{f}$ is continuous, then, for large $n$, the final number of infected vertices is close to $n \int_{\cS} \hat{f} \dd\mu $ with high probability, where $\mu$ is a measure on $\cS$ that describes the type distribution of the vertices. In most cases, when $\kappa$ satisfies some Lipschitz condition, we can actually show that a least fixed point exists and that it is continuous. In the next step, we then find that the Fr\'echet derivative of $\Psi_{\kappa}$ at the origin (the function that is constantly equal to zero) determines whether small infections spread to a large part of the graph or stay local. We provide an algorithm that effectively determines the fixed point $\hat{f}$ and we provide an extensive numerical case study.

The content of this paper is organized as follows. In Section \ref{sec:con}, we introduce the random graph and outline our main results. In Section \ref{sec:main_proof} we explain our proof strategy and provide the thorough proofs of our main results, Theorem~\ref{thm:fp} and Theorem~\ref{thm:resilience}. In Section~\ref{case:study} we present our algorithm to determine the least fixed point of the operator $\Psi_{\kappa}$. We also perform an extensive case study that shows that our asymptotic results hold already for fairly small networks. In Appendix~\ref{sec:dis}, we study systems with only finitely many types. These systems serve as the tool in our proof of the general setting. In Appendix~\ref{sec:aux}, we collect proofs of several auxiliary results.

\section{Model and results} \label{sec:con}

{\em Random graph model:}
For each $n \in \mathbb{N}$, we consider a vertex set $[n] = \{1, 2, \dots, n\}$. Each vertex $i \in [n]$ is assigned a deterministic parameter $s_i (n)\in \cS$. This parameter is called the vertex type and it takes values in a compact metric space $\cS$. The type is a vertex characteristic that will determine its local connectivity properties. Examples of vertex types in a social network could be gender, nationality, or location of residence, to name a few. Let $\bm{s} (n)=(s_1 (n), \dots, s_n (n))$ be the vector of the types of all vertices. We shall often omit $n$ and just write $\bm{s}$ and $s_i$ for $i\in [n]$. We consider directed random graphs without self-loops. The connection probability $p_{ij} = p_{ij}(n)$ between two vertices $i, j \in [n]$ depends now on their types $s_i$ and $s_j$ and is specified by some non-negative and Borel measurable kernel function: $\kappa:\cS^2 \rightarrow \mathbb{R}$: a directed edge between vertex $i$ and $j$ ($i\neq j$) is present with probability 
$$p_{ij} = \min\{1, \kappa(s_i, s_j)/n\}.$$
Further, let the event that an edge is present be independent of the presence of all other edges. 

{\em Bootstrap percolation:}
In addition to the parameter $s_i$ we assign to each vertex $i\in [n]$ a second deterministic parameter $r_i(n) \in \mathbb{N}_0$. 
The parameter $r_i(n)$ represents a threshold level of vertex $i$, and determines in the percolation process how many neighbors of the vertex $i$ need to be infected for the vertex $i$ to become infected itself. Denote by $\bm{r} (n)=(r_1(n),\dots,r_n (n))$ the vector of thresholds. 

More precisely, let $\mathbb{D}_0$ be the set of initially infected vertices, i.e. $\mathbb{D}_0:=\{ i\in [n] \mid r_i(n)=0 \}$. The set $\mathbb{D}_0$ starts the process. Let $N(i):=\{ j \in [n] \mid E_{ji}=1\}$ denote the set of neighbors with a directed edge pointing to vertex $i$, where $E_{ji}$ is a binary indicator of an edge from $j$ to $i$. Then, in the first generation, those vertices get infected whose number of edges from vertices in $\mathbb{D}_0$ reaches or exceeds their threshold, i.e. $\mathbb{D}_1 = \{i \in [n]: |\mathbb{D}_0 \cap N(i)| \ge r_i(n)\}$.
As the infection continues to spread, the infected vertices in the $m$-th generation, where $m \in \mathbb{N}$, are given by
\begin{align}
    \mathbb{D}_m = \{i \in [n]: |\mathbb{D}_{m-1} \cap N(i)| \ge r_i(n)\}.
\end{align}
Since there are $n$ vertices in total, the spread certainly comes to an end after at most $n$ steps and $\mathbb{D}_{M} = \mathbb{D}_{M+1}=\dots $ for some $M\leq n$. We denote by $\mathbb{D}_{n}$ the final set of infected vertices. Note that given the realization of the random graph, the bootstrap percolation process is deterministic. In Figure~\ref{fig:percolation} we exemplify the bootstrap percolation process for a graph with $n=10$ vertices. In this example graph, $2$ vertices are initially infected and all remaining vertices have a threshold equal to $2$. The graph is a random sample arising from a kernel that we specify in our case study in Section~\ref{case:study}.

\begin{figure}[t]
\begin{center}
\includegraphics[clip, width=0.8 \linewidth]{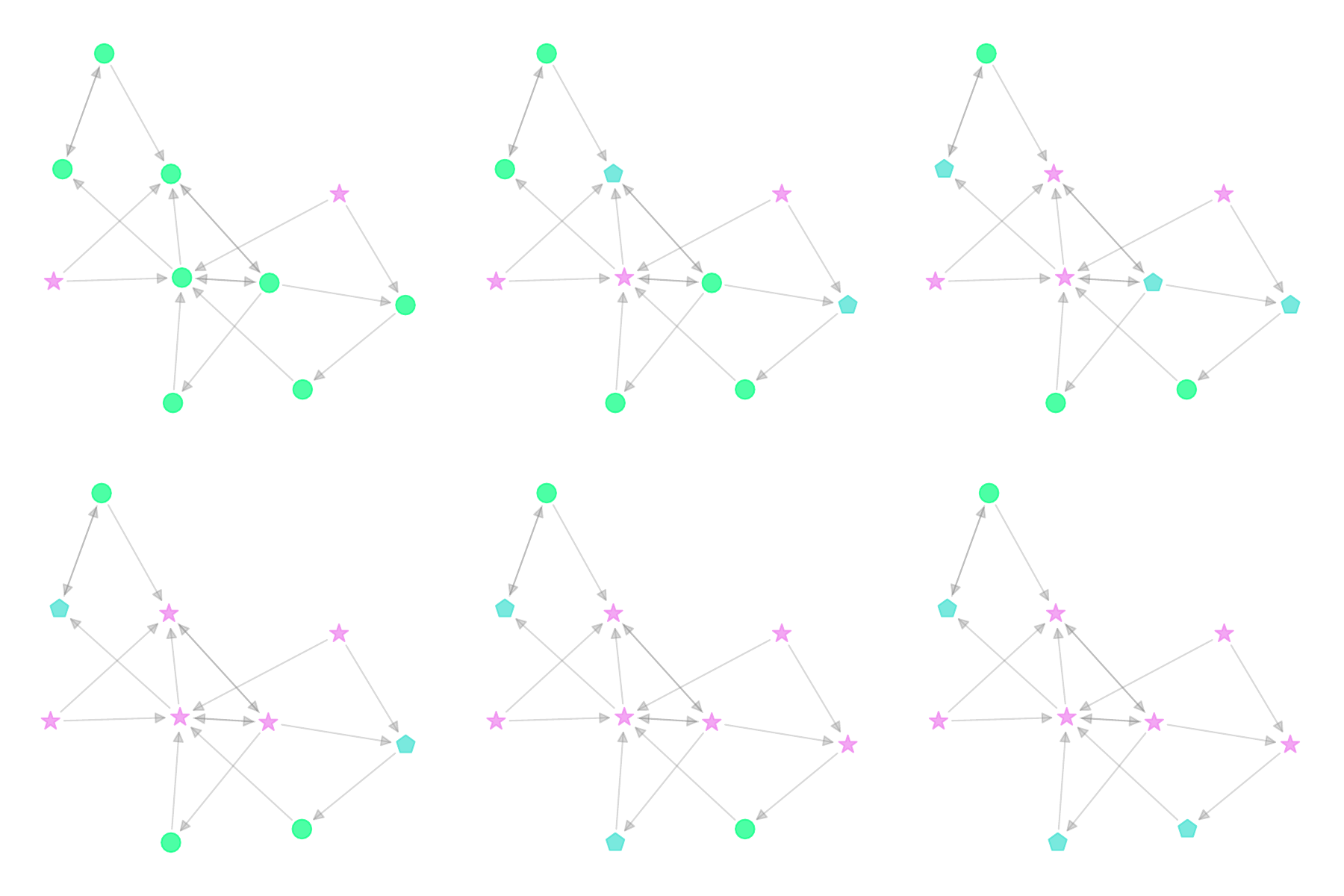}
\caption{
An example of a bootstrap percolation process on a directed graph with $n=10$ vertices, each having threshold $2$. The process evolves over six rounds before terminating, as no further infections occur. The rounds are arranged from left to right and from top to bottom. Green circular vertices have not yet received any incoming edge from an infected vertex; blue pentagonal vertices have received exactly one incoming edge from an infected vertex; and violet star vertices are infected, either initially or during the process.
}
\label{fig:percolation}
\end{center}
\end{figure}
In this paper we investigate the fraction $n^{-1}\abs{\mathbb{D}_{n}}$ of infected vertices as $n\rightarrow \infty$, in the random graph described above. This requires that we have a sequence of random graphs parameterized by $n$. Recall that for fixed $n$, all relevant information about the probabilistic structure of the skeleton of the graph is encoded in $\kappa$ and $\bm{s} (n)$ while the additional information required for the bootstrap percolation process is encoded in $\bm{r} (n)$. A sequence of random graphs, together with their percolation parameters, can then be derived by specifying $\kappa$ and a sequence of vectors $(\bm{s} (n),\bm{r} (n))_{n\in \mathbb{N}}$. We shall pose a regularity condition that ensures that the proportion of vertices of each type and threshold converges as $n\rightarrow \infty$.

\begin{assumption}\label{ass:regularity}For each $n$ and for given $k\in \mathbb{N}_0$ and Borel set $A \subset \cS$, let $U_k^{(n)}(A) = \{i \in [n]: r_i(n)=k, s_i(n) \in A\}$ be the set of vertices with threshold equal to $k$ and type in $A$. Let further $\nu^{(n)}(k,A) := |U_k^{(n)}(A)| / n$ be their proportion. We shall assume that there exists a measure $\nu$ on $\mathbb{N}_0 \times \cS$ such that for each fixed $k$, the measure $\nu(k,\cdot)$ is a Borel measure on $\cS$ and that for every $k\in \mathbb{N}_0$ and Borel set $A$ it holds that:
\begin{align} \label{eq:nu_init}
\nu^{(n)}(k,A) \xrightarrow[n\rightarrow \infty]{} \nu (k,A).
\end{align} 
We denote by $\mu$ the marginal measure of $\nu$ on $\cS$, and by $\eta_k$ the Radon-Nikodym derivative of $\nu (k, \cdot)$ against $\mu$ on $\cS$, that is, for every Borel set $A$,
\begin{align} \label{eq:margin}
\mu(A)=\sum_{k=0}^{\infty} \nu(k,A), 
&& \nu(k, A) = \int_{A} \eta_k(s) \dd \mu(s).
\end{align}
\end{assumption}
One way to ensure that Assumption~\ref{ass:regularity} is satisfied is by starting with a measure $\nu$ on $\mathbb{N}_0 \times \cS$ and assigning types and thresholds to vertices i.i.d. from this distribution. Then, the strong law of large numbers ensures that condition \eqref{eq:margin} holds. We provide examples in Section \ref{case:study}.
Let the vector $\cV:=(\kappa,\nu, \bm{s} (n),\bm{r} (n))_{n\in \mathbb{N}}$ collect all information about the vertex sequence and denote the random graph with $n$ vertices derived from this sequence by $G(n,\cV)$. The random graph model used here is an adapted version of the model proposed in \cite{bollobas_phase_2007}, enriched by the threshold values $\bm{r}(n)$, which are crucial parameters for the process we study. While the actual \textit{final set of infected vertices} $\mathbb{D}_n$ depends on the realization of the random graph $G(n,\cV)$, we will see that under Assumption~\ref{ass:regularity} for large $n$, a law of large numbers holds, and we can determine the limit in probability of $n^{-1}|\mathbb{D}_n|$ for $n\rightarrow \infty$ purely in terms of $\kappa$, $\nu$ and $\cS$.

\subsection{Results}
In this section we present the main results of this paper about the final number of infected vertices at the end of the bootstrap percolation process. We pose the following assumption throughout this paper. 

\begin{assumption} \label{ass:kernel}
We assume that the kernel $\kappa$ is continuous on the space $\cS\times \cS$, equipped with the product topology.
\end{assumption}

We provide specifications for $\kappa$ in Section \ref{case:study}. In light of Assumption~\ref{ass:kernel}, note that by the Tychonoff theorem, it follows from compactness of $\cS$ that $\cS \times \cS$ is compact with respect to the product topology, and then in particular that $\kappa$ is bounded on $\cS \times \cS$. We let in the following $M_{\kappa}$ be the bound of $\kappa$. Our main result will show that under Assumption~\ref{ass:regularity} and \ref{ass:kernel} the final proportion of infected vertices $n^{-1}|\mathbb{D}_n| $ converges in probability to a quantity that can explicitly be determined by solving a fixed point equation in a function space.
\begin{remark}
It would be interesting to allow for discontinuous $\kappa$ or a state space $\cS$ that is not compact. This more general model setup has the advantage that $\kappa$ is not necessarily bounded and that one may generate random graphs in which vertices of some type have very large degrees. However, without Assumption~\ref{ass:kernel} it appears that even the existence of a least fixed point of the related operators cannot be guaranteed, except for some special cases. 
From a practical perspective, when fitting the model to data, a bounded $\kappa$ may not be that restrictive, as one may choose a sufficiently large cutoff based on the largest observed degree. However, some effects of the process might only show when the tails of the distribution are fully included. In Section~\ref{case:study} we provide a numerical case study with two different unbounded kernels which suggests that the results extend to many unbounded kernels. Analyzing the precise growth conditions on $\kappa$ under which results hold for unbounded kernels seems very interesting and likely involves more advanced functional analytic techniques. We leave this for future research. Since allowing for bounded but possibly discontinuous $\kappa$ does not significantly enrich the theoretical treatment of the percolation process that we consider, we restrict to a setup that keeps the presentation simpler. 
\end{remark}
We now introduce the operators that will be important for the analysis of the bootstrap percolation process.

{\em Related operators:}
Let $\cF_b$ be the set of bounded, non-negative, and Borel measurable functions on $\cS$,
\begin{align} \label{eq:Fb_def}
\cF_b:=\{f:\cS \rightarrow \mathbb{R}_+, \text{bounded, Borel}\}.
\end{align} 
We equip $\cF_b$ with the supremum norm $\norm{\cdot}_{\infty}$, defined by $\norm{f}_{\infty}:=\sup_{s\in \cS} \abs{f(s)}$, which turns $\cF_b$ into a metric space with metric given by $d(f_1,f_2)=\norm{f_1(s)-f_2(s)}_{\infty}$. We stress that $\cF_b$ is actually not a vector space but only a convex cone because it contains only the non-negative functions. Although $\cF_b$ is not a vector space we will still use the norm symbol $\norm{\cdot}_{\infty}$ which causes no problem because $\cF_b\subset \{ f:\cS \rightarrow \mathbb{R}, \text{bounded}\}$, which is a Banach space. We further define 
\begin{align} \label{eq:F1_def}
\cF_1:=\{f:\cS \rightarrow [0,1], \text{Borel}\}\subset \cF_b,
\end{align}
the subset of those functions that are bounded by $1$, and the two constant functions $\cF_b \ni \mathbf{0}: \cS \rightarrow \{0\}$ and $\cF_b \ni \mathbf{1}: \cS \rightarrow \{1\}$. 

For a Borel measurable kernel function $\kappa:\cS^2 \rightarrow \mathbb{R}$, we define the operators $\Lambda_{\kappa}:\cF_b\rightarrow \cF_b , P_{\kappa}^k:\cF_b\rightarrow \cF_1$ and $\Psi_{\kappa}:\cF_b\rightarrow \cF_1$ by
\begin{equation}\label{eq:operator}
    \begin{aligned} 
        \Lambda_{\kappa} [f](\cdot) 
            &:= \int_{s \in \cS} \kappa(s, \cdot) f(s) \dd \mu(s), \\
        P_{\kappa}^k [f](\cdot) 
            &:= \frac{\(\Lambda_{\kappa} [f](\cdot)\)^k}{k !} e^{-\Lambda_{\kappa} [f](\cdot)}, \\
        \Psi_{\kappa} [f](\cdot) 
            &:= \sum_{k=0}^\infty \eta_k(\cdot)\(1- \sum_{k'=0}^{k-1}P_{\kappa}^{k'} [f](\cdot)\),
    \end{aligned}
\end{equation}
where $\eta_k$ is the Radon-Nikodym derivative defined in \eqref{eq:margin} and the convention $0^0 = 1$ is used. We use bold centered dot notation with bracket $[\boldsymbol{\cdot}]$ for arguments that are function-valued $f \in \cF_b$, and centered dot notation with parentheses $(\cdot)$ for arguments of type $s \in \cS$. In the following two lemmas, we state some important properties of the operators defined above. For $f, g \in \cF_b$, we say that $f \le g$ if and only if $f(s) \le g(s)$ for all $s \in \cS$.

\begin{lemma}[Monotonicity]\label{prop:psi_mono} 
For $f,g \in \cF_b$ with $f \le g$ it holds that $\Psi_{\kappa} f \le \Psi_{\kappa} g$.
\end{lemma}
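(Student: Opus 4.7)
The plan is to reduce monotonicity of $\Psi_\kappa$ to the well-known fact that Poisson tail probabilities are monotone in the mean. First I observe that for each $s\in\cS$, the expression $1-\sum_{k'=0}^{k-1} P_\kappa^{k'}[f](s)$ is exactly the probability $\Pb(N_{\Lambda_\kappa[f](s)} \ge k)$ where $N_\lambda$ denotes a Poisson random variable with mean $\lambda \ge 0$. Since $\eta_k \ge 0$, the operator $\Psi_\kappa$ can thus be written as a nonnegative combination of such tail probabilities, and monotonicity will follow once I show (i) that $\Lambda_\kappa$ itself is monotone and (ii) that each Poisson tail probability is monotone in the Poisson mean.

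For (i), recall that $\kappa \ge 0$ by assumption. Hence if $f \le g$, then pointwise $\kappa(s,\cdot)f(s) \le \kappa(s,\cdot)g(s)$, and integrating against the nonnegative Borel measure $\mu$ yields $\Lambda_\kappa[f] \le \Lambda_\kappa[g]$.

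For (ii), I would give a direct calculus argument. Setting $Q_k(\lambda) := 1 - \sum_{k'=0}^{k-1}\tfrac{\lambda^{k'}}{k'!}e^{-\lambda}$ for $k \ge 1$, a telescoping computation of $\tfrac{d}{d\lambda}[\tfrac{\lambda^{k'}}{k'!}e^{-\lambda}] = \tfrac{\lambda^{k'-1}}{(k'-1)!}e^{-\lambda} - \tfrac{\lambda^{k'}}{k'!}e^{-\lambda}$ (with the $k'=0$ term handled separately) gives
\begin{equation}
Q_k'(\lambda) \;=\; \frac{\lambda^{k-1}}{(k-1)!}\,e^{-\lambda} \;\ge\; 0,
\end{equation}
while $Q_0 \equiv 1$ is trivially nondecreasing. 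Combining (i) and (ii), for each fixed $s \in \cS$ and each $k \in \mathbb{N}_0$ the inequality $\Lambda_\kappa[f](s) \le \Lambda_\kappa[g](s)$ yields $1-\sum_{k'=0}^{k-1} P_\kappa^{k'}[f](s) \le 1-\sum_{k'=0}^{k-1} P_\kappa^{k'}[g](s)$.

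Finally, multiplying by $\eta_k(s) \ge 0$ and summing over $k \ge 0$ preserves the inequality, giving $\Psi_\kappa[f](s) \le \Psi_\kappa[g](s)$ for all $s \in \cS$, which is the claim. No step is really the main obstacle here; the only mild subtlety is the (routine) termwise differentiation of the Poisson c.d.f., but everything else is just nonnegativity of $\kappa$ and $\eta_k$ together with monotonicity of the integral.
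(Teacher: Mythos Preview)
Your proof is correct and follows essentially the same approach as the paper: both rewrite $\Psi_\kappa$ via the Poisson tail representation $\sum_k \eta_k\, \Pb(\mathrm{Poi}(\Lambda_\kappa[f]) \ge k)$, observe that $\Lambda_\kappa$ is monotone since $\kappa \ge 0$, and then use monotonicity of the Poisson tail in its mean. The only difference is that you supply the explicit telescoping-derivative verification of the tail monotonicity, whereas the paper simply invokes it as a known fact.
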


To state the next lemma we define subsets of $\cF_b$ and $\cF_1$ defined in Equation~\eqref{eq:Fb_def} and \eqref{eq:F1_def} that consist of continuous functions: For $\star \in \{b,1\}$, define
\begin{equation} \label{eq:Fc_def}
\begin{aligned}
\cF_\star^c
    &:=\{f\in \cF_\star: f \text{ is continuous}\} \subset \cF_\star, \\
\cF_\star^{Lip}
    &:= \{f \in \cF_\star: f \text{ is Lipschitz with constant } L_{\kappa} (1+M_{\kappa})M_{\kappa} e^{M_{\kappa}}\} \subset \cF_\star^c.
\end{aligned}
\end{equation}

\begin{lemma}[Continuous image of $\Psi_{\kappa}$]\label{prop:psi_lipschitz}
For all $f \in\cF_b$, it holds $\Lambda_{\kappa} [f] \in \cF_b^{c}$, $P_{\kappa}^k [f]\in \cF_1^{c}$ and $\Psi_{\kappa} [f]\in \cF_1^{c}$. Moreover, if in addition $\kappa$ is Lipschitz continuous with constant $L_{\kappa}$, then for all $f\in\cF_b$, it holds $\Lambda_{\kappa} [f] \in \cF_b^{Lip}$, $P_{\kappa}^k [f]\in \cF_1^{Lip}$ and $\Psi_{\kappa} [f]\in \cF_1^{Lip}$.
\end{lemma}

We will see that if a least fixed point of the operator $\Psi_{\kappa}$ exists, then it allows us to determine the limit in probability of the final proportion of infected vertices. By fixed point, we mean a function $f\in\cF_b$ such that $f = \Psi_{\kappa} f$. With a Lipschitz condition on $\kappa$, we can prove the existence of a least fixed point of $\Psi_{\kappa}$:
\begin{lemma}[Existence of the least fixed point]\label{prop:fp_exist}
Let $\kappa$ be Lipschitz continuous with constant $L_{\kappa}$ and $\cF_1^{Lip}$ as defined in Equation~\eqref{eq:Fc_def}. Let further $\mathcal{H} \subset \cF_1^{Lip}$ be the set defined by 
\begin{align} \label{eq:fp}
\mathcal{H}:= \{f \in \cF_1^{Lip} : f = \Psi_{\kappa} f\}.
\end{align}
Then $\mathcal{H}\neq \emptyset$ and $\mathcal{H}$ includes all the fixed points. Moreover, there exists a function $\eta_0 \leq \hat{f}\in \mathcal{H}$ such that $\hat{f}\leq h$ for every $h \in \mathcal{H}$. We call $\hat{f}$ the \textit{least fixed point}. Clearly, it then also holds that $\int_{\cS} \hat{f} \dd \mu =\min_{f \in \mathcal{H}} \big\{ \int_{\cS} f \dd \mu \big\}$.
\end{lemma}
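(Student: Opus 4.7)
The plan is to construct $\hat{f}$ via monotone iteration from $\mathbf{0}$, following the standard Knaster--Tarski strategy on the convex cone $\cF_1$ equipped with the pointwise order. First I would observe that $\Psi_{\kappa}[\mathbf{0}] = \eta_0$: indeed $\Lambda_{\kappa}[\mathbf{0}] \equiv 0$, so $P_{\kappa}^{0}[\mathbf{0}] \equiv 1$ and $P_{\kappa}^{k'}[\mathbf{0}] \equiv 0$ for $k' \geq 1$, whence only the $k=0$ summand in the definition of $\Psi_{\kappa}$ survives. In particular $\Psi_{\kappa}[\mathbf{0}] \geq \mathbf{0}$, so setting $f_0 := \mathbf{0}$ and $f_{n+1} := \Psi_{\kappa}[f_n]$, the monotonicity result (Lemma~\ref{prop:psi_mono}) gives, by induction, that the sequence $(f_n)_{n\in\N}$ is pointwise non-decreasing. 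It is also bounded above by $\mathbf{1}$ since $\Psi_{\kappa}$ maps into $\cF_1$. Therefore $\hat{f}(s) := \lim_{n\to\infty} f_n(s)$ exists for every $s\in\cS$ and takes values in $[0,1]$.

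Next I would verify that $\hat{f}$ inherits the regularity of the iterates and that it is indeed a fixed point. By Lemma~\ref{prop:psi_lipschitz}, for each $n \geq 1$ the function $f_n$ lies in $\cF_1^{Lip}$ with the common Lipschitz constant $L_{\kappa}(1+M_{\kappa})M_{\kappa}e^{M_{\kappa}}$; this property is preserved under pointwise limits with a uniform constant, so $\hat{f}\in\cF_1^{Lip}$. To pass the limit through $\Psi_{\kappa}$, I would apply monotone convergence to $\Lambda_{\kappa}[f_n](s)=\int \kappa(t,s)f_n(t)\,\dd\mu(t)$ to get pointwise convergence $\Lambda_{\kappa}[f_n] \to \Lambda_{\kappa}[\hat{f}]$. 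Continuity of $x \mapsto x^{k'}e^{-x}/k'!$ then yields $P_{\kappa}^{k'}[f_n] \to P_{\kappa}^{k'}[\hat{f}]$ pointwise. Finally, since $\eta_k(\cdot)\bigl(1-\sum_{k'=0}^{k-1}P_{\kappa}^{k'}[f_n](\cdot)\bigr)$ is dominated by $\eta_k(\cdot)$ and $\sum_k \eta_k \equiv 1$ (recall $\mu = \sum_k \nu(k,\cdot)$), dominated convergence lets me exchange limit and infinite sum, giving $\Psi_{\kappa}[f_n](s) \to \Psi_{\kappa}[\hat{f}](s)$. Combining with $f_{n+1} = \Psi_{\kappa}[f_n] \to \hat{f}$ shows $\hat{f} = \Psi_{\kappa}[\hat{f}]$, i.e.\ $\hat{f}\in\mathcal{H}$, and the relation $\hat{f}=\Psi_{\kappa}[\hat{f}]\geq\Psi_{\kappa}[\mathbf{0}]=\eta_0$ gives the announced lower bound.

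For minimality, I would take any $h\in\mathcal{H}$. Since $h\geq\mathbf{0}=f_0$, iterated application of Lemma~\ref{prop:psi_mono} together with $h=\Psi_{\kappa}^n[h]$ yields $h\geq \Psi_{\kappa}^n[\mathbf{0}]=f_n$ for every $n$, hence $h\geq\hat{f}$ upon taking $n\to\infty$. This shows both that $\hat{f}$ is the minimal element of $\mathcal{H}$ and that $\int_{\cS}\hat{f}\,\dd\mu=\min_{f\in\mathcal{H}}\int_{\cS} f\,\dd\mu$. To see that every $h\in\mathcal{H}$ is continuous (in fact Lipschitz with the stated constant), I would simply use that $h=\Psi_{\kappa}[h]$ and apply Lemma~\ref{prop:psi_lipschitz} once more. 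The claim that $\mathcal{H}$ contains \emph{all} fixed points in the larger cone $\cF_b$ then reduces to checking that any $f\in\cF_b$ with $f=\Psi_{\kappa}[f]$ automatically satisfies $f\in\cF_1^{Lip}$, which is immediate from Lemma~\ref{prop:psi_lipschitz} since the right-hand side lies in $\cF_1^{Lip}$.

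The step I expect to require the most care is the interchange of limit and operator needed to conclude that $\hat{f}$ is genuinely a fixed point. The integral inside $\Lambda_{\kappa}$, the composition with $x\mapsto x^{k'}e^{-x}/k'!$, and the infinite sum defining $\Psi_{\kappa}$ all have to be handled in turn; boundedness of $\kappa$ by $M_{\kappa}$ (guaranteed by Assumption~\ref{ass:kernel} and compactness of $\cS$) and the integrability $\sum_k \eta_k \equiv 1$ are what make monotone and dominated convergence applicable. Everything else (monotonicity of the iteration, preservation of the Lipschitz constant under pointwise limits, the Knaster--Tarski-style minimality argument) is essentially routine once Lemmas~\ref{prop:psi_mono} and \ref{prop:psi_lipschitz} are in hand.
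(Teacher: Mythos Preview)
Your argument is correct, but it follows a genuinely different route from the paper's. You build $\hat{f}$ constructively as the pointwise limit of the iterates $\Psi_{\kappa}^n[\mathbf{0}]$ and then push the limit through the operator via monotone and dominated convergence; minimality is obtained by the standard induction $h\geq f_n$ for any fixed point $h$. The paper instead invokes the abstract Knaster--Tarski theorem directly: it checks that $(\cF_1^{Lip},\leq)$ is a complete lattice by proving that the pointwise supremum and infimum over an arbitrary family of functions sharing the common Lipschitz constant $L_{\kappa}(1+M_{\kappa})M_{\kappa}e^{M_{\kappa}}$ again satisfy that bound, and then concludes existence of a least fixed point from order-preservation of $\Psi_{\kappa}$ alone, without ever passing a limit through the operator.

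The trade-off is clear. Your approach is more constructive and yields the iteration scheme that the paper later exploits numerically anyway; the price is the three-layer limit exchange ($\Lambda_{\kappa}$, then $P_{\kappa}^{k'}$, then the sum over $k$), which you handle correctly but which requires spelling out the domination $\sum_k\eta_k\equiv 1$. The paper's approach avoids all of this analysis at the cost of the lattice verification---and that verification is precisely where the Lipschitz hypothesis on $\kappa$ earns its keep, since mere continuity would not guarantee that suprema over uncountable families remain measurable, let alone continuous. Both arguments ultimately lean on Lemma~\ref{prop:psi_lipschitz} for the same reason (to confine all fixed points to a class stable under the relevant limiting operation), but they exploit it differently: you use the \emph{uniform} Lipschitz constant to pass to the limit of a sequence, while the paper uses it to close $\cF_1^{Lip}$ under arbitrary pointwise sup and inf.
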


\begin{remark}\label{rank1:remark}
Let us consider the special case of rank one models, a particular popular example being the Norros-Reittu model \cite{Norros_Reittu_2006}. Assume that the kernel $\kappa:\cS^2 \rightarrow \mathbb{R}$ has multiplicative form, i.e. $\kappa (s,s')= \phi (s)\phi (s')$ with $\phi: \cS\rightarrow \mathbb{R}$ continuous. From the definition of $\Lambda_{\kappa} [f], P_{\kappa}^k [f]$ and $\Psi_{\kappa} [f]$, it then follows that the fixed point equation $f = \Psi_{\kappa} [f]$ can be written as $$f(s') 
= g\Bigl(s',\phi(s') \int_{s\in \mathcal S} \phi(s)\, f(s)\,\dd\mu(s)\Bigr)$$ for monotonically increasing $g : \mathbb{R}_+ \rightarrow [0,1]$ given by $$g(s',z)=\sum_{k=0}^\infty \eta_k (s') \(1- \sum_{k'=0}^{k-1}\frac{z^k}{k !} e^{-z}\).$$ Any fixed point must then be of the form $f_C(s) = g\bigl(s,\phi(s)\, C\bigr)$ with the constant $C := \int_{s \in \mathcal S} \phi(s)\, f(s)\,\dd\mu(s)$. But for $f_C$ to satisfy the fixed point condition, it must hold that $C = \int_{s\in \mathcal S} \phi(s)\, f_C(s)\,\dd\mu(s)=\int_{s\in \mathcal S} \phi(s)\, g\bigl(s,\phi(s)\, C\bigr)\,\dd\mu(s)$. We therefore obtain an one-dimensional map $$F(C) := \int_{s \in \mathcal S} \phi(s)\, g\bigl(s,\phi(s)\, C\bigr)\,\dd\mu(s)$$ and fixed point condition $C=F(C)$. Once we find a fixed point $\hat{C}$ for $F$, the fixed point $\hat{f}$ can be obtained via $\hat{f} (s)= g\bigl(s,\phi(s)\, \hat{C}\bigr)$, and conversely any fixed point arises in that way. 

This consideration includes the simplest example of the homogeneous and sparse Erd\H{o}s--R\'enyi graph with $c>0$ fixed and $\kappa (s,s')=c=\phi (s)\phi (s')$ with $\phi (s)=\sqrt{c}$. In this case, $\eta_k$ can be considered constant as the connection probability does not depend on the types, and $$g(s',z)=g(z)=\sum_{k=0}^\infty \eta_k \(1- \sum_{k'=0}^{k-1}\frac{z^k}{k !} e^{-z}\).$$ Any fixed point $\hat{f}$ is then of the form $\hat{f} (s)= g\bigl(\sqrt{c}\, \hat{C}\bigr)$ and thus is a constant function. This recovers the well known result about bootstrap percolation in this model, see \cite{janson_bootstrap_2012}, where an one-dimensional fixed point equation is derived.
\end{remark}
The proof of Lemma~\ref{prop:fp_exist} relies on an argument that requires us to find a subset $\mathcal{H}_1 \subset \cF_b$ of measurable functions that contains all fixed points of $\Psi_{\kappa}$ (if any) and which is such that for any subset $\mathcal{H}_2 \subset  \mathcal{H}_1$ the pointwise supremum (respectively infimum) of the functions in $\mathcal{H}_2$ is a function in $\mathcal{H}_1$. Because the supremum of measurable functions is not measurable in general, we need to choose a continuity property that is stable under taking the supremum. Without the assumption that $\kappa$ is Lipschitz continuous, it is therefore not possible to show the existence of a fixed point, except for some special cases where $\kappa$ is either of a particular structural form, or the thresholds of the vertices are all either $0$ or $1$. 

Next, we shall specify the Fr\'echet derivative on a convex cone. Just as the Jacobian represents all partial and directional derivatives of a map from $\mathbb{R}^n$ to $\mathbb{R}^m$, the Fr\'echet derivative does the same for functions between infinite-dimensional spaces. It is the linear operator that provides the best first-order approximation of a function near a point and encodes all directional derivatives at once.
Recall that $\cF_b$ is a convex cone such that $a f_1 + b f_2 \in \cF_b$ for $f_1, f_2 \in \cF_b$ and any non-negative scalar $a,b\geq 0$. Though the Fr\'echet derivative is usually defined on a vector space, it can be defined in the same spirit on $\cF_b$. Let $C(\cF_b,\cF_b)$ be the space of continuous operators from $\cF_b$ to itself and 
$$L^+ (\cF_b):=\{ L\in C(\cF_b,\cF_b) | L(a f_1 + b f_2)=aL(f_1) + b L(f_2)  \text{ for all } a,b\in \R_+ , f_1,f_2\in  \cF_b \}.$$
By definition $L^+ (\cF_b)$ is closed under linear combinations with non-negative scalar. For an operator $\Phi: \cF_b \rightarrow \cF_b$, we call $D\Phi: \cF_b \rightarrow L^+(\cF_b)$ the Fr\'echet derivative of $\Phi$ in $L^+(\cF_b)$ if for all $f,h\in \cF_b$ it holds 
\begin{equation}
\lim_{\norm{h}_{\infty}\rightarrow 0} \frac{\norm{\Phi(f+h) -\Phi(f)  -D \Phi f [h] }_{\infty}}{\norm{h}_{\infty}}=0.
\end{equation}
\begin{lemma}[Fr\'echet derivative] \label{prop:gd}
The operator $\Psi_{\kappa} : \cF_b \rightarrow \cF_b$ is Fr\'echet differentiable at every point $f\in \cF_b$ and the derivative $D \Psi_{\kappa}: \cF_b \rightarrow L^+(\cF_b)$, is given by 
\begin{equation}\label{eq:derivative}
D \Psi_{\kappa}f [\boldsymbol{\cdot}](\cdot) = \Lambda_{\kappa} [\boldsymbol{\cdot}]  (\cdot) V[f](\cdot)
\end{equation}
where $V[f](\cdot) := \sum_{k=1}^{\infty} \eta_k(\cdot)  P_{\kappa}^{k-1}[f] (\cdot)$.
\end{lemma}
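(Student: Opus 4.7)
The plan is to rewrite $\Psi_\kappa$ as a composition with the Poisson survival function and then apply one-variable Taylor's theorem. Introduce $g_j(x):=x^j e^{-x}/j!$ and $F_k(x):=\sum_{j=k}^\infty g_j(x)$, the survival function of a Poisson$(x)$ variable. Since $\sum_{k'=0}^\infty P_\kappa^{k'}[f](\cdot)=1$, the definition in \eqref{eq:operator} rearranges pointwise to
\begin{equation}
\Psi_\kappa[f](\cdot)=\sum_{k=0}^\infty \eta_k(\cdot)\,F_k\bigl(\Lambda_\kappa[f](\cdot)\bigr).
\end{equation}
A telescoping based on $g_j'(x)=g_{j-1}(x)-g_j(x)$ for $j\geq 1$ and $g_0'=-g_0$ gives $F_0\equiv 1$ and $F_k'=g_{k-1}$ for $k\geq 1$. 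Since $P_\kappa^{k-1}[f]=g_{k-1}(\Lambda_\kappa[f])$, the claimed derivative $\Lambda_\kappa[h]\,V[f]$ is exactly what a formal term-by-term differentiation through the sum, together with the chain rule applied to the linear operator $\Lambda_\kappa$, predicts.

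The technical core is a remainder estimate uniform in both $k$ and $s\in\cS$. The crucial observation is that $g_j(x)\in[0,1]$ for every $j\geq 0$ and every $x\geq 0$, because $\sum_{j\geq 0}g_j(x)=1$ is a series of non-negative terms. With the convention $g_{-1}\equiv 0$, this yields $|F_k''(x)|=|g_{k-1}'(x)|\leq g_{k-2}(x)+g_{k-1}(x)\leq 2$ for all $k\geq 0$ and $x\geq 0$. Fix $f,h\in\cF_b$; linearity gives $\Lambda_\kappa[f+h]=\Lambda_\kappa[f]+\Lambda_\kappa[h]$, and $\kappa\geq 0$ ensures $\Lambda_\kappa[f](s),\Lambda_\kappa[h](s)\geq 0$. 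Taylor's theorem with Lagrange remainder applied to $F_k$ on $\R_+$ then gives, for every $k$ and every $s$,
\begin{equation}
\bigl|F_k(\Lambda_\kappa[f+h](s))-F_k(\Lambda_\kappa[f](s))-F_k'(\Lambda_\kappa[f](s))\,\Lambda_\kappa[h](s)\bigr|\leq\bigl(\Lambda_\kappa[h](s)\bigr)^2.
\end{equation}

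Multiplying by $\eta_k(s)$ and summing over $k$---permissible because each of the three pointwise series $\sum_k\eta_k F_k(\Lambda_\kappa[f+h])$, $\sum_k\eta_k F_k(\Lambda_\kappa[f])$ and $\sum_k\eta_k F_k'(\Lambda_\kappa[f])\Lambda_\kappa[h]$ is dominated by $\eta_k(s)\cdot\mathrm{const}$ and $\sum_{k\geq 0}\eta_k(s)=1$ $\mu$-a.e.\ by \eqref{eq:margin}---the left-hand side collapses to $|\Psi_\kappa[f+h](s)-\Psi_\kappa[f](s)-\Lambda_\kappa[h](s)V[f](s)|$, while the right-hand side sums to $(\Lambda_\kappa[h](s))^2$. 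Combined with $\|\Lambda_\kappa[h]\|_\infty\leq M_\kappa\mu(\cS)\|h\|_\infty$ and taking the supremum in $s$,
\begin{equation}
\bigl\|\Psi_\kappa[f+h]-\Psi_\kappa[f]-\Lambda_\kappa[h]\,V[f]\bigr\|_\infty\leq M_\kappa^2\mu(\cS)^2\,\|h\|_\infty^2=o(\|h\|_\infty),
\end{equation}
which is the Fréchet differentiability statement at the arbitrary point $f\in\cF_b$.

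It remains to verify that $h\mapsto\Lambda_\kappa[h]V[f]$ lies in $L^+(\cF_b)$: non-negativity as well as additivity and positive-homogeneity in $h$ follow from the linearity of $\Lambda_\kappa$ together with $\kappa,V[f]\geq 0$; continuity comes from $\|\Lambda_\kappa[h]V[f]\|_\infty\leq \|V[f]\|_\infty M_\kappa\mu(\cS)\|h\|_\infty$ with $\|V[f]\|_\infty\leq\sup_s\sum_{k\geq 1}\eta_k(s)\leq 1$. The only step that is not entirely routine is pinning down the uniform-in-$k$ bound on $F_k''$; once that is in hand, everything else is a standard one-variable Taylor estimate combined with the $L^1$-normalization $\sum_k\eta_k=1$ that legitimizes swapping the summation in $k$ past the Taylor expansion.
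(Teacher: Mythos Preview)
Your argument is correct and follows the same core idea as the paper: compose the pointwise Poisson function with the bounded linear operator $\Lambda_\kappa$ and control the remainder via one-variable Taylor expansion. The paper differentiates each $P_\kappa^k=M_k\circ\Lambda_\kappa$ separately (with $M_k(x)=x^k e^{-x}/k!$), obtains $DP_\kappa^k f[\cdot]=-\Lambda_\kappa[\cdot](P_\kappa^k f-P_\kappa^{k-1}f)$, and then asserts that \eqref{eq:derivative} follows ``immediately'' from the definition of $\Psi_\kappa$. Your repackaging via the survival function $F_k$ is algebraically equivalent but has a concrete advantage: the bound $|F_k''|\leq 2$ is manifestly uniform in $k$, so after multiplying by $\eta_k$ and summing you get the remainder $(\Lambda_\kappa[h])^2$ directly, without having to track how the Taylor constant for each individual $M_k$ depends on $k$. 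The paper's proof is a bit casual at exactly this point (its constant $C$ is not shown to be uniform in $k$ before summing), so your version is in fact the more complete of the two. You also explicitly verify that $h\mapsto\Lambda_\kappa[h]V[f]$ belongs to $L^+(\cF_b)$, which the paper leaves implicit.
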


\subsubsection{Final fraction of infected vertices.} \label{sec:final_infec}

We are now ready to state our first main result about the final number of infected vertices.
We start with graphs with $\nu (0, \cS) > 0$, which ensures that as $n\rightarrow \infty$ there exists a positive fraction of initially infected vertices. Later in Section \ref{sec:res}, we will turn to graphs without initial infections, and then study the impact of imposing a small fraction of infections. This will lead to the idea of resilience of the random graph. Let $\xrightarrow{p}$ denote convergence in probability. In the following we denote by $\mathbb{D}(G(n, \cV))$ the final set of infected vertices in the random graph $G(n, \cV)$.

\begin{theorem} [Final fraction of infected vertices]\label{thm:fp}
Let $\cV$ be a vertex sequence with resulting random graph $G(n,\cV)$ and assume that a least fixed point $\hat{f}$ of $\Psi_{\kappa}$ exists. Then the following holds:
\begin{enumerate}
\item For every $\varepsilon > 0$ it holds that 
\begin{equation}
 \lim_{n\rightarrow \infty} \Pb \left(n^{-1}|\mathbb{D}(G(n, \cV))| \geq \int_{\cS} \hat{f} \dd \mu -\varepsilon \right)=1.  
\end{equation}
\item If, in addition, there exists some non-negative and continuous function $h \in \cF_b$ and $\epsilon > 0$ such that
\begin{align} \label{eq:gd_cond_N}
D \Psi_{\kappa} \hat{f} [h] - h 
    < - \epsilon \mathbf{1},
\end{align}
then
\begin{align} \label{eq:fp_fz_N}
n^{-1}|\mathbb{D}(G(n, \cV))| \xrightarrow[n\rightarrow \infty]{p} \int_{\cS} \hat{f} \dd \mu.
\end{align}
\end{enumerate}
\end{theorem}

In particular for the case of $\kappa$ Lipschitz, we know by Lemma~\ref{prop:fp_exist} that a least fixed point exists. Except for some pathological cases, then indeed also the derivative condition \eqref{eq:gd_cond_N} holds. In fact, in most cases one may simply choose $h=\mathbf{1}$. This is because the first fixed point of the operator $\Psi_{\kappa}$ is a first joint zero for the function $f\mapsto \Psi_{\kappa}[f]-f$. Because $\Psi_{\kappa}$ is monotonically increasing by Lemma~\ref{prop:psi_mono}, it follows that $D \Psi_{\kappa} \hat{f} [\mathbf{1}]\geq \mathbf{0}$ and because $\hat{f}$ is the least fixed-point that $D \Psi_{\kappa} \hat{f} [\mathbf{1}] - \mathbf{1}\leq \mathbf{0}$. If the densities $\eta_k$ are continuous, then $D \Psi_{\kappa} \hat{f} [\mathbf{1}]$ is continuous and attains a maximum on $\mathcal{S}$. The only way that no $\epsilon>0$ exists such that $D \Psi_{\kappa} \hat{f} [\mathbf{1}] - \mathbf{1}\leq -\epsilon \mathbf{1}$ and thus possibly condition \eqref{eq:gd_cond_N} fails, is then that $D \Psi_{\kappa} \hat{f} [\mathbf{1}](s)=1$ for some $s\in \mathcal{S}$, which is a knife-edge tangency case vanishing with minor modifications of the system specifications. Indeed, in case of rank 1 models, it was shown in Remark~\ref{rank1:remark} that one obtains an one-dimensional fixed-point equation. The situation where \eqref{eq:gd_cond_N} fails would correspond to the first zero of $f\mapsto \Psi_{\kappa}[f]-f$ being a saddle point. Therefore, in most cases, Theorem~\ref{thm:fp} allows us to determine the final fraction of infected vertices at the end of the bootstrap percolation process resulting from an initial infection $\nu (0, \cS)>0$.

\subsubsection{Resilience.} \label{sec:res}

In the previous section, we studied the final set of infected vertices for a random graph with a positive fraction of initially infected vertices, i.e. $\nu (0,\cS) > 0$. However, for a graph without any initial infection we now ask the following important question: If we impose infection on a certain proportion of vertices to start the percolation, to what extent does the final fraction of infected vertices depend on the proportion of imposed infection? Does it exceed a certain level $\Delta$, no matter how small the infection is that we impose? This leads us to the question of resilience of the graph. To specify, we use the tilde notation to refer to the initially uninfected graph, starting from $\wtilde{\nu}(0, \cS) = 0$. We transform the graph from $\wtilde{\nu}(0, \cS) = 0$ to $\nu(0, \cS) > 0$ by setting the thresholds of some vertices to zero to mark them as infected while keeping the thresholds of all other vertices unchanged. If there exists a lower bound $\Delta$ such that any proportion of imposed infection leads to at least $\Delta n$ infected vertices at the end of the process, we call the graph \textit{non-resilient}. On the contrary, we call it \textit{resilient} if a reduction of the fraction of imposed infected vertices towards $0$ ensures that also the final fraction of infected vertices approaches $0$.

To formalize the setup, let $\wtilde{\cV}:=(\kappa,\wtilde{\nu},(\bm{s} (n),\wtilde{\bm{r}} (n))_{n\in \mathbb{N}}$ be a vertex sequence that fulfills Assumption~\ref{ass:regularity} and that is such that $\wtilde{\nu} (0,\cS)=0$. For each $n\in \mathbb{N}$, we now define a new threshold sequence $\bm{r} (n)= (r_1(n),\dots,r_n (n))$ which is such that for each $n\in \mathbb{N},1 \leq i\leq n$ either $r_i(n)=\wtilde{r}_i(n)$  or $r_i(n)=0$. Note that this change does not affect the probabilistic structure of the random graph, which remains totally unchanged, but only marks some vertices as infected. For $k\in \mathbb{N}$ and Borel set $A\subset \cS$, define
$$\nu^{(n)}(k,A) := n^{-1} \sum_{i \in [n]} I_{\{k\}}(r_i)I_{A}(s_i),$$
where $I_{B}$ is the indicator function of set $B$.
We further assume that for the new sequence there exists again a measure $\nu$ on $\mathbb{N}_0 \times \cS$ such that for each fixed $k$, the measure $\nu(k,\cdot)$ is a Borel measure on $\cS$ and that for every $k\in \mathbb{N}_0$ and Borel set $A$ it holds that:
\begin{align}
\nu^{(n)}(k,A) \xrightarrow[n\rightarrow \infty]{} \nu (k,A).
\end{align} 
Moreover, we assume that $\nu (0,\cS)>0$. This means that as a result of the threshold changes, a fraction of vertices is infected, and these infected vertices start the percolation process. Let as before $\cV:=(\kappa,\nu, \bm{s} (n),\bm{r} (n))_{n\in \mathbb{N}}$.

By construction of $\cV$, where we impose infection on $\wtilde{\cV}$ by changing the thresholds of some vertices to zero, we have the following relations for all Borel sets $A \subset \cS$:
\begin{align} \label{eq:resl_nu0k}
\wtilde{\nu}(k, A) - \nu (k, A) \geq 0\text{ for } k \geq 1, &&
    \nu(0, A)  =\sum_{k=1}^\infty \( \wtilde{\nu}(k, A) - \nu (k, A) \).
\end{align}
In the same fashion as in \eqref{eq:margin}, for subset $A \subset \cS$, we denote the marginal distribution and the Radon-Nikodym derivative by
\begin{align}
\wtilde{\mu}(A) := \sum_{k=0}^{\infty} \wtilde{\nu}(k, A),
&& \wtilde{\nu}(k, A) = \int_{A} \wtilde{\eta}_k(s) \dd \wtilde{\mu}(s).
\end{align}
We now define resilience and non-resilience as follows:
\begin{definition}
Let $\wtilde{\cV}$ be the initial sequence of uninfected vertices  and $\cV$ the sequence of vertices with imposed infections. Denote by $G(n, \wtilde{\cV})$ and $G(n, \cV)$ the corresponding random graphs that have the same distribution and differ in thresholds. 

We call the random graph $G(n, \wtilde{\cV})$
\begin{enumerate}
\item \textit{non-resilient}, if there exists $\Delta > 0 $ only depending on $\wtilde{\nu}$ such that $$\lim_{n\rightarrow \infty} \Pb \left(|\mathbb{D}(G(n, \cV))|/n \ge \Delta\right)=1.$$
\item  \textit{resilient}, if for all $\alpha >0$, there exists $\delta >0$ such that $$\lim_{n\rightarrow \infty} \Pb \left(|\mathbb{D}(G(n, \cV))|/n < \alpha \right)=1$$ for $0< \nu(0,\cS) < \delta$.
\end{enumerate}
\end{definition}
In other words, for a \textit{non-resilient} graph, no matter how small the initial infection is, the final fraction of infected vertices will always exceed a certain level, while, for a \textit{resilient} graph, the final fraction of infected vertices is small as long as the initial infection is small. In the following, we provide a condition that allows us to determine whether a random graph is resilient or non-resilient. 
Introduce the two operators $\wtilde{\Psi}: \cF_b \rightarrow \cF_b$ and $\Psi: \cF_b \rightarrow \cF_b$ corresponding to the uninfected $G(n,\wtilde{\cV})$ and the infected graph $G(n,\cV)$:
\begin{align}
\wtilde{\Psi} [f](\cdot) := \sum_{k=1}^\infty \wtilde{\eta}_k(\cdot)\(1- \sum_{k'=0}^{k-1}P_{\kappa}^{k'} [f](\cdot)\), &&
\Psi [f](\cdot) := \sum_{k=0}^\infty \eta_k(\cdot)\(1- \sum_{k'=0}^{k-1}P_{\kappa}^{k'} [f](\cdot)\),
\end{align}
where $P_{\kappa}^{k}$ is as defined in equation \eqref{eq:operator}. Note that the summation of $\wtilde{\Psi}$ starts from $k=1$ as $\wtilde{\nu}(0, \cS) =0$. 
For the operator $\wtilde{\Psi}$ we can calculate the Fr\'echet derivative $D \wtilde{\Psi}f$ at $f$ as in equation \eqref{eq:derivative}. The following theorem provides the resilience condition. Recall that $D \wtilde{\Psi} \mathbf{0}[h]$ is the Fr\'echet derivative of the operator $\wtilde{\Psi}$ at the point $\mathbf{0}$, applied to $h$.
\begin{theorem}[Resilience] \label{thm:resilience}
If there exists a continuous and non-negative function $h \in \cF_b$ such that
\begin{enumerate}
\item $D \wtilde{\Psi} \mathbf{0}[h] > h$. Then, the graph is non-resilient.
\item $D \wtilde{\Psi} \mathbf{0}[h] < h$. Then, the graph is resilient.
\end{enumerate}
Moreover, the graph cannot be both non-resilient and resilient, i.e. there is no pair of functions $h, g \in \cF_b$ such that $D \wtilde{\Psi}\mathbf{0}[h] > h$ and $D \wtilde{\Psi}\mathbf{0}[g] < g$ hold at the same time. 
\end{theorem}
\begin{remark}
It is worth noting that our notion of resilience is in some cases related to the existence of a giant component. In \cite[Thm. 3.1]{bollobas_phase_2007} it is shown that a giant component exists if the $\mathcal{L}^2$ norm of the operator $\Lambda_{\kappa}$ is larger than $1$. In our model, if all the vertices have a threshold equal to one, then our condition 1. implies that the $\mathcal{L}^2$ norm of $\Lambda_{\kappa}$ is larger than $1$ and the network must have a giant component.
\end{remark}

\section{Main proofs} \label{sec:main_proof}
In this section, for our two main results, Theorem~\ref{thm:fp} and Theorem~\ref{thm:resilience}, we provide the proof strategy, auxiliary results, and then the formal proofs. Proofs of auxiliary results are collected in Appendix~\ref{sec:aux}.

\begin{figure}[h]
\begin{center}
\begin{adjustbox}{width=0.8\linewidth}
\begin{tikzpicture}[shorten >=1pt,node distance=5cm,on grid,auto]
\tikzstyle{rrec} = [
draw, rectangle, rounded corners, align=center,
minimum width=1.5cm, 
minimum height=1cm,
]
\tikzstyle{arrow} = [->,>=stealth]
\tikzstyle{double_arrow} = [<->,>=stealth]
\node (vdlk_p) [rrec] {$G(n, \cV^{d+}_{L,K})$};
\node (vlk_p) [rrec, right = 5cm of vdlk_p] {$G(n, \cV^+_{L,K})$};
\node (vk) [rrec, below right = 1.5cm and 4cm of vlk_p] {$G(n, \cV_{K})$};
\node (vlk_m) [rrec, below left = 1.5cm and 4cm of vk] {$G(n, \cV^-_{L,K})$};
\node (vdlk_m) [rrec, left of = vlk_m] {$G(n, \cV^{d-}_{L,K})$};
\node (v) [rrec, right = 3cm of vk] {$G(n, \cV)$};
\node (eq) [align=left, below left = 1.5cm and 2cm of vdlk_p] {
    \footnotesize Solve $L$-dim fixed point eq.\\
    \footnotesize to determine $|\mathbb{D}(G(n, \cV_{L, K}^{d\pm}))|$
};
\draw [double_arrow] (vdlk_p) -- (vlk_p)
    node[above, pos=0.5] {
        \footnotesize Embedding
    }
    node[below, align=left, pos=0.5]{
        \footnotesize$|\mathbb{D}(G(n, \cV_{L, K}^{d+}))| = $ \\
        \footnotesize$|\mathbb{D}(G(n, \cV_{L, K}^+))|$
    };
\draw [arrow] (vlk_p) -| (vk)
    node[above, pos=0.25] {
        \footnotesize Coupling, \footnotesize $L \to \infty$
    } 
    node[below, align=left, pos=0.25]{
        \footnotesize $|\mathbb{D}(G(n, \cV_{L, K}^+))| \ge$ \\
        \footnotesize $|\mathbb{D}(G(n, \cV_{L, K}))|$
    };
\draw [double_arrow] (vdlk_m) -- (vlk_m)
    node[below, pos=0.5] {
      \footnotesize Embedding
    }
    node[above, align=left, pos=0.5]{
        \footnotesize $|\mathbb{D}(G(n, \cV_{L, K}^{d-}))|=$ \\
        \footnotesize $|\mathbb{D}(G(n, \cV_{L, K}^-))|$
    };
\draw [arrow] (vlk_m) -| (vk)
    node[below, pos=0.25] {
        \footnotesize Coupling, \footnotesize $L \to \infty$
    } 
    node[above, align=left, pos=0.25]{
        \footnotesize $|\mathbb{D}(G(n, \cV_{L, K}^-))| \le$ \\
        \footnotesize $|\mathbb{D}(G(n, \cV_{L, K}))|$
    } ;
\draw [arrow] (vk) -- (v) 
    node[above, pos=0.5] {
        \footnotesize $K \to \infty$
    };
\draw (vdlk_p) -| (eq);
\draw (vdlk_m) -| (eq);
\end{tikzpicture}
\end{adjustbox}
\caption{
Road map to prove Theorem~\ref{thm:fp}.
}
\label{fig:proof}
\end{center}
\end{figure}

\subsection{Final fraction of infected vertices}\label{proof:strat:final:frac}
The proof strategy for Theorem~\ref{thm:fp} is illustrated in Figure~\ref{fig:proof}. We first consider random graphs with a \textit{finite number of vertex types and a bounded threshold} (FTBT), denoted by $G(n, \cV^{d\pm}_{L,K})$, where $L$ denotes the number of types and $K$ denotes the maximal threshold. We then apply results from \cite{wormald_differential_1995} to show in Proposition~\ref{prop:fp_z} that the final number of infected vertices $|\mathbb{D}(G(n, \cV^{d\pm}_{L,K}))|$ in the FTBT random graph for large $n$ is with high probability close to a solution of an $L$-dimensional fixed point equation. Next, the FTBT graphs $G(n, \cV^{d\pm}_{L,K})$ are embedded into graphs with a \textit{compact type space $\cS$, a stepwise constant kernel, and bounded threshold} (SCBT), denoted by $G(n, \cV^\pm_{L,K})$ in such a way that they have the same final number of infected vertices ($|\mathbb{D}(G(n, \cV^{d\pm}_{L,K}))|=|\mathbb{D}(G(n, \cV^\pm_{L,K})|$).

The SCBT graphs serve as lower ($G(n, \cV^-_{L,K})$) and upper ($G(n, \cV^+_{L,K})$) bounds for a graph having the original \textit{compact type space $\cS$ and kernel with a bounded threshold $K$} (CSBT). These random graphs are denoted by $G(n, \cV_{K})$. We obtain the SCBT graphs for the upper and lower bounds from a sequence of nested partitions of the space $\cS$, indexed by $L \in \N$. Based on this sequence of partitions we can then define stepwise constant coupling kernels $\kappa_L^+$ and $\kappa_L^-$ for each $L \in \mathbb{N}$. The kernels $\kappa_L^+$ and $\kappa_L^-$ dominate the original $\kappa$ from above and below, and the least fixed points of $\Psi_{\kappa_L^-}$ and $\Psi_{\kappa_L^+}$ converge to the least fixed point of $\Psi_{\kappa}$. This allows us to couple the CSBT graph $G(n, \cV_{K})$ with two sequences of random graphs $G(n, \cV^{\pm}_{L,K})$, one which, for each random realization, has more edges than $G(n, \cV_{K})$, and another one with fewer edges. This leads to the bounds for the final fraction of infected vertices $|\mathbb{D}(G(n, \cV^{-}_{L,K}))| \le |\mathbb{D}(G(n, \cV_K))| \le |\mathbb{D}(G(n, \cV^{+}_{L,K}))|$.

In a next step we show that $$\lim_{L \to \infty}\abs{|\mathbb{D}(G(n, \cV^{-}_{L,K}))|-|\mathbb{D}(G(n, \cV^{+}_{L,K}))|}=0$$ and determine $|\mathbb{D}(G(n, \cV^{-}_{L,K}))|$ for large $n$ in Proposition~\ref{prop:fp_K}. This finally leads to the proof of Theorem~\ref{thm:fp} for $|\mathbb{D}(G(n, \cV))|$, the final infection in the random graph with the \textit{compact type space $\cS$} and unbounded threshold (CSUT) $G(n, \cV)$.
One particular challenge in these approximations is that the fixed point equations that determine $|\mathbb{D}(G(n, \cV^{\pm}_{L,K}))|$ are increasing in dimension with $L$ increasing. 

In the first step of the proof, in which we determine the final fraction of infected vertices in the finite type setting (FTBT) in Proposition~\ref{prop:fp_z}, we can follow ideas in \cite{articleAmini,Cont2016,Detering2018,Detering2016,BichuchDetering2021} to apply the concentration results in \cite{wormald_differential_1995} to derive the finite-dimensional fixed point equation that allow us to determine the final outcome of the process. All the remaining ideas of the proof, including the approximations, embeddings and operator results involved are entirely new.

\begin{definition}[Graph with finite vertex type and bounded threshold (FTBT)] \label{def:dis_sys}
For fixed $L,K \in \N$ we consider a random graph with vertices having types in the set $[L]=\{1, \dots, L\}$ and thresholds $m \in [K] \cup \{0\}=\{0, \dots, K\}$. The proportion of vertices with certain threshold and type is described by quantities $(\mathbf{\nu}^{l,(n)}_k)_{l \in [L], \ k \in [K] \cup \{0\}}$, where
\begin{equation}
\mathbf{\nu}^{l,(n)}_k = n^{-1}\sum_{i \in [n]} I_{\{k\}}(r_i)I_{\{l\}}(s_i).
\end{equation}
We assume that there exist constants $(\mathbf{\nu}^{l}_k )_{l \in [L], \ k \in [K] \cup \{0\}}$, to which the proportion of vertices with a certain type and threshold converges:
\begin{equation}
\mathbf{\nu}^{l,(n)}_k \xrightarrow[n\rightarrow \infty]{} \mathbf{\nu}^{l}_k.
\end{equation}
For the connection probability, we define a discrete kernel $\kappa^d_L: [L]^2 \rightarrow \mathbb{R}$. Note that we use a superscript $d$ to distinguish the kernel defined on $[L]^2$ from the one defined on $\cS^2$. Then, for vertices $i,j \in [n]$ with types $s_i, s_j \in [L]$, their connection probability is given by $\min \{ 1, \kappa^d_L (s_i, s_j) /n\}$. We define the vertex sequence $\cV_{L,K}^d =(\kappa^d_L,\nu, \bm{s} (n),\bm{r} (n))$ for this graph and denote by $G(n,\cV_{L,K}^d)$ the resulting random graph.
\end{definition}
For this finite-type vertex sequence, the percolation process can be fully described by the solution of a system of ordinary differential equations of dimension $L$. The final fraction of infected vertices $n^{-1}|\mathbb{D}(G(n, \cV_L^d))|$ is then, for large $n$, determined from the least joint zero of this system. For $l\in [L]$ and $k\in \mathbb{N}_0$, with slight abuse of notation, we define the functions $\nu_k^l: [0,1]^{L} \rightarrow \mathbb{R}$ by 
\begin{equation}\label{eq:ode_sol_z_1}
\begin{aligned}
\nu_0^l(\mathbf{z})
    &= - z^l + \nu_{0}^l(\mathbf{0}) + \sum_{k'=1}^K \nu_{k'}^l(\mathbf{0})\( 1 - \sum_{k''=0}^{k'-1} p(k'',\lambda^l(\mathbf{z})) \) \\
\nu_k^l(\mathbf{z})
    &= \sum_{k'=k}^K \nu_{k'}^l(\mathbf{0}) p(k' -k,\lambda^l(\mathbf{z})), \ \forall 1 \le k \le K-1, \\
\nu_K^l(\mathbf{z})
    &= \nu_{K}^l(\mathbf{0}) p(0,\lambda^l(\mathbf{z})),
\end{aligned}
\end{equation}
where  $\nu_{k}^l(\mathbf{0})$ is the initial proportion of vertices with type $l$ and threshold $k$, $p(k, \lambda) := \tfrac{\lambda^k}{k!}e^{-\lambda}$ is a Poisson probability function, $\mathbf{z}:=(z^l)_{l \in [L]} \in [0,1]^{L}$ is an $L$-dimensional vector, and $\lambda^l(\mathbf{z}) := \sum_{l' \in [L]} \kappa^d(l',l) z^{l'}$. Note that the quantities $\mathbf{z}$, $\lambda$, and $p(k, \lambda)$ are the discrete counterparts of the operators $f$, $\Lambda$ and $P$ defined in \eqref{eq:operator}. In a sequential formulation introduced in Appendix~\ref{sec:dis}, $\nu_{k}^l$ tracks the proportion of type $l$ vertices that need $k$ more infected neighbors to become infected. The argument vector contains for each type $l\in [L]$, the fraction of vertices of this type which are infected and whose impact on the system has already been explored. 

Define the \textit{least joint zero} as
\begin{align}\label{finite:fixed:point}
\what{\mathbf{z}}
    := \operatorname*{min} \left\{ \mathbf{z}\in [0,1]^{L} :\( \nu_0^l(\mathbf{z}) \)_{l\in [L]}= \mathbf{0} \right\},
\end{align}
where the minimum is with respect to the partial order obtained from the component-wise comparison $\le$. The existence of $\what{\mathbf{z}}$ follows from the Knaster-Tarski fixed point theorem. The following proposition determines the final fraction of infected vertices in $G(n,\cV^d_{L,K})$.
\begin{proposition}[Final fraction of infected vertices for graph with finite vertex type (FTBT)]\label{prop:fp_z}
Let $\nu (0,\mathcal{S})>0$ and let $\what{\mathbf{z}}$ be the least fixed point defined in \eqref{finite:fixed:point}. If for some vector $(w^l)_{l \in [L]} > \mathbf{0}$ and $\sum_{l\in[L]} w^l \le 1$, the condition
\begin{align} \label{eq:deriv_zhat}
\sum_{l' \in [L]} w^{l'} \frac{\d \nu_0^l(\what{\mathbf{z}})}{\d z^{l'}}  < 0, \ \forall l \in [L],
\end{align}
holds for the graph with finite vertex type $G(n,\cV_{L,K}^d)$, then it holds
\begin{align}
n^{-1}|\mathbb{D}(G(n,\cV_{L,K}^d))| \xrightarrow[n\rightarrow 
\infty]{p} \sum_{l\in [L]} \what{z}^l.
\end{align}
\end{proposition}

\begin{remark}
If condition \eqref{eq:deriv_zhat} does not hold, then still for every $\varepsilon>0$
\begin{align}
\lim_{n\rightarrow \infty} \mathbb{P}\left( n^{-1}|\mathbb{D}(G(n,\cV_{L,K}^d))|\ge \sum_{l\in[L]} \what{z}^l -\varepsilon \right)=1.
\end{align}
\end{remark}

For a given FTBT random graph sequence $\cV_{L,K}^d =(\kappa^d_L,\nu, \bm{s} (n),\bm{r} (n))$, we may define a partition $\{\cS_L^l\}_{l \in [L]}$ of the compact type space $\cS$ and a measure $\nu: [K]\cup \{0\} \times \cS \to \R_+$ such that $\nu(k, \cS_L^l) = \nu_k^l$ for all $k \in [K]\cup \{0\}$ and $l \in [L]$. Let $\mu(\cdot) = \sum_{k=0}^K \nu(k, \cdot)$. The finite-dimensional description of the graph $G(n,\cV^{d}_{L,K})$ can then be related to the operator $\Psi_{\kappa_L}$ of a SCBT graph $G(n,\cV_{L,K})$ that has a piece-wise constant kernel $\kappa_L: \cS \times \cS \to \R_+$. 

\begin{definition}[Embedding graph]
For a partition $\{\cS_L^l\}_{l \in [L]}$ define the stepwise kernel $\kappa_L(s, s'): \cS \times \cS \rightarrow \mathbb{R}$ and the vertex sequence $\cV_{L,K}$ by
\begin{align}
\kappa_L(s, s')
    :=\sum_{l,l' \in L} I_{\cS^l}(s) I_{\cS^{l'}}(s') \kappa^d_L(l, l'),
&&
\cV_{L,K} := (\kappa_L, \nu, (\bm{s}(n), \bm{r}(n))),
\end{align}
then the resulting graph $G(n, \cV_{L,K})$ is called the \textit{embedding graph} of $G(n, \cV^d_{L,K})$.
\end{definition}
By construction, $G(n, \cV^d_{L,K})$ and $G(n, \cV_{L,k})$ have the same distribution and by coupling the two random graphs, we may even ensure that $\abs{\mathbb{D}(G(n,\cV_{L, K}))}=\abs{\mathbb{D}(G(n, \cV^d_{L,K})}$. This allows us to extend the result of Proposition~\ref{prop:fp_z} for FTBT graphs to SCBT graphs. The functionals \eqref{eq:ode_sol_z_1} for $G(n, \cV^d_{L,K})$ and the functionals \eqref{eq:operator} for $G(n, \cV_{L,k})$ are related in the following sense: For the vector $\mathbf{z}_L=(z_L^1, \dots , z_L^L) \in [0,1]^{L}$, define a finite-dimensional function $f_L(s): \cS \rightarrow [0,1]$ by
\begin{align}
f_L(s)
    := \sum_{l \in [L]} \frac{1}{\mu(\cS_L^l)} z_L^l I_{\cS_L^l}(s),
\label{eq:step}
\end{align}
then by definition of $f_L$, it holds that $P_{\kappa}^k [f_L](s)=p(k,\lambda^l (\mathbf{z}_L))$ for $s\in \cS_L^l$ and any $k \in [K]\cup\{0\}$. This implies
\begin{equation}\label{embed:function}
\begin{aligned}
\nu_0^l(\mathbf{z}_L)
    &= - z_L^l + \sum_{k'=0}^K \nu_{k'}^l(\mathbf{0}) \( 1 - \sum_{k''=0}^{k'-1} p(k'',\lambda^l(\mathbf{z}_L))\) \\
    &=  - \int_{\cS_L^l} f_L(s) \dd \mu(s) +  \int_{\cS_L^l}\Psi_{\kappa_L} [f_L](s) \dd \mu(s), 
\end{aligned}
\end{equation}
for all $l\in [L]$ where $\nu_0^l(\mathbf{z}_L)$ and $\nu_k^l(\mathbf{0})$ are defined in Equation \eqref{eq:ode_sol_z_1}, and we also used that $\nu_{k}^l(\mathbf{0}) =\int_{\cS_L^l} \eta_k (s) \dd \mu(s)$. Because all fixed points of $\Psi_{\kappa_L}$ are step functions by Lemma \ref{lem:fp_conv}, the first zero $\what{\mathbf{z}}_L$ of $(\nu_0^l(\mathbf{z}))_{l\in [L]}$ and the first fixed point $\hat{f}_L$ of $\Psi_{\kappa_L}$ are related by 
\begin{align}
\hat{f}_L(s):= \sum_{l \in [L]} \frac{1}{\mu(\cS_L^l)} \what{z}_L^l I_{\cS_L^l}(s),
&&
\sum_{l \in [L]} \what{z}_L^l \label{eq:dtc_tau}
    &= \int_{\cS} \hat{f}_L (s) \dd \mu(s).
\end{align}
Moreover, for $0<w_l<1, \ \sum_{l \in [L]} w_j = 1$ and $h_L\in \cF_b$ defined by 
\begin{align}
h_L(s) := \sum_{l \in [L]} \tfrac{1}{\mu(\cS_L^l)}w^l I_{\cS_L^l}(s),
\end{align}
it holds that
\begin{equation}\label{eq:dtc_deriv}
\begin{aligned}
\sum_{l' \in [L]} w^{l'} \frac{\d \nu_0^l(\mathbf{z}_L)}{\d z^{l'}}
    &= - w^l +  \(\sum_{l' \in [L]} \kappa_L^d(l',l) w^{l'}\) \nu_1^l(\mathbf{z}_L) \\
    &= - \int_{\cS_L^l} h_L(s) \dd \mu(s) +  \int_{\cS_L^l} D \Psi_{\kappa_L} f_L [h_L] (s) \dd \mu(s).
\end{aligned}
\end{equation}
On the left hand side we have the directional derivative of the multivariate function $(\nu_0^l(\mathbf{z}))_{l\in [L]}$ while on the right hand side we have an expression that involves the Fr\'echet derivative in the direction $h_L$. The identity will allow us to derive stopping criteria for the percolation process in $G(n,\cV_{L,K})$ from the Fr\'echet derivative of $\Psi_{\kappa}$. 

Because of the equalities \eqref{embed:function}, \eqref{eq:dtc_tau}, and \eqref{eq:dtc_deriv} we say that $G(n,\cV^d_{L,K})$ is embedded into $G(n,\cV_{L, K})$. This embedding will allow us to describe the final default fraction $\abs{\mathbb{D}(G(n,\cV_{L, K}))}=\abs{\mathbb{D}(G(n, \cV^d_{L,K})}$ in terms of the functionals \eqref{eq:operator} for $G(n,\cV_{L, K})$. We next apply this observation to analyze the lower and upper bounds of the final fraction of infected vertices in CSBT graphs by constructing its \textit{coupling graphs}. For this we shall first specify a sequence of partitions for $\cS$ based on the following lemma. Let $d: \cS \times \cS \to \R_+$ be the metric of the type space $\cS$, and for subset $\mathcal{R} \subset \cS$, define $\mathrm{diam}(\mathcal{R}):= \sup_{x, y \in \mathcal{R}} d(x,y)$.
\begin{lemma}[Type space partition]
\label{prop:partition}
There exists a sequence, indexed by $m \in \N$, of partitions $\{\cS_{L(m)}^l\}_{l \in [L(m)]}$ of $\cS$ with $L(m)$ the finite number of subsets and with each set $\cS_{L(m)}^l  \subset \mathcal{S}$ Borel. Further, the sequence is such that 
\begin{enumerate}
\item $\{\cS_{L(m')}^l\}_{l \in [L(m')]}$ for each $m' > m$ is a refinement of $\{\cS_{L(m)}^l\}_{l \in [L(m)]}$, i.e. each  $\cS_{L(m)}^l$ is a union $\cup_{j\in J} \cS_{L(m')}^j$ for some index set $J\subset L(m')$.
\item As $m \rightarrow \infty$, $\mathrm{diam}(S_{L(m)}^l) \rightarrow 0$, uniformly for all $l \in [{L(m)}]$.
\end{enumerate}
\end{lemma}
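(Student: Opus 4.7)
I would construct iteratively a refining sequence of finite Borel partitions of $\cS$ whose cells shrink in diameter, and then derive the measure-smallness property from this, exploiting separability and total boundedness of the compact metric space $\cS$.

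First, fix a countable dense sequence $(x_k)_{k\in\N}\subset\cS$ and, for each $L\in\N$, extract from it a finite $2^{-L}$-net $\{x_{k_1},\dots,x_{k_{M(L)}}\}$, chosen incrementally so that the net at level $L+1$ extends the net at level $L$. Form the associated quasi-Voronoi cells
\begin{equation}
V_L^j \;:=\; B(x_{k_j},2^{-L}) \,\setminus\, \bigcup_{i<j} B(x_{k_i},2^{-L}),
\qquad j=1,\dots,M(L),
\end{equation}
which provide a disjoint Borel partition of $\cS$ with each cell of diameter at most $2^{1-L}$. To secure the refinement property in item 1, I would then define $P_L$ inductively: $P_1$ is the quasi-Voronoi partition at scale $2^{-1}$, and $P_{L+1}$ is the common refinement of $P_L$ with the quasi-Voronoi partition at scale $2^{-(L+1)}$, discarding empty intersections. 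This is finite at every level, refines the preceding level by construction, and preserves the diameter bound.

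For item 2, I would invoke continuity of $\mu$ from above. For any $s\in\cS$, the cells containing $s$ across successive levels form a nested decreasing sequence whose intersection lies in $\bigcap_L B(s,2^{1-L})=\{s\}$, so their $\mu$-measures converge to $\mu(\{s\})$. Hence $\max_{l}\mu(\cS_L^l)\to 0$ holds provided $\mu$ has no atoms; if $\mu$ does carry atoms, one may enumerate them as $a_1,a_2,\dots$ and modify the construction so that $\{a_1\},\dots,\{a_L\}$ appear as explicit cells at level $L$, ensuring that every non-singleton cell has vanishing measure in the limit.

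The main obstacle I anticipate is exactly item 2: smallness of diameter does not automatically yield smallness of $\mu$-measure. The structural part (a refining sequence of finite Borel partitions of vanishing mesh) is a standard consequence of separability and total boundedness, but the uniform measure bound hinges on the atomic structure of $\mu$ and requires either an implicit non-atomicity hypothesis or the atom-isolation modification sketched above.
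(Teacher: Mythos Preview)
Your construction is essentially the same as the paper's: cover $\cS$ by finitely many small balls (the paper uses compactness directly rather than first extracting a dense sequence, and scale $1/L$ rather than $2^{-L}$), disjointify by subtracting earlier balls, and obtain the refinement property by intersecting the level-$(L{+}1)$ cover with the level-$L$ partition. So on the structural side you and the paper proceed identically.

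Your worry about item~2 is well placed, and in fact the paper's own proof establishes only the diameter bound $\operatorname{diam}(\cS_L^l)\le 1/L$; it never derives $\mu(\cS_L^l)\to 0$ from this. Your atom-isolation modification does not rescue the literal statement either: once $\{a\}$ is a cell, $\mu(\{a\})>0$ persists for all $L$, so $\max_l \mu(\cS_L^l)\not\to 0$. The resolution is that the only place the lemma is invoked (the proof of the uniform-convergence Lemma for $\Psi_{\kappa_L^\pm}$) actually uses shrinking \emph{diameter} to exploit uniform continuity of $\kappa$; the passage ``$\mu(\cS_L^a)\to 0$ \dots\ so we can let $\delta\to 0$'' there should be read as ``$\operatorname{diam}(\cS_L^a)\to 0$''. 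So item~2 is best understood as a diameter statement, which both your argument and the paper's deliver, and your flagged obstacle is a genuine slip in the paper's formulation rather than a defect in your proof.
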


For a CSBT graph $G(n, \cV_K)$, we construct two sequences of coupling SCBT graphs, indexed by $L \in \mathbb{N}$, $G(n, \cV_{L, K}^-)$ and $G(n, \cV_{L, K}^+)$ as follows. In the following we often drop the dependence of $L$ on $m$ to simplify notation.

\begin{definition}[Coupling graphs] \label{def:couple}
Let $\{\cS_{L(m)}^l\}_{l \in [L(m)]}$ be a sequence of partitions for the type space $\cS$ according to Lemma \ref{prop:partition}. In the following we omit $m$ and just write $L$. For each $L$, define lower coupling graph $G(n, \cV_{L, K}^-)$ and an upper coupling graph $G(n, \cV_{L, K}^+)$ as random graphs with vertex sequence $\cV_{L, K}^-:=(\kappa_L^-,\nu, \bm{s} (n),\bm{r} (n))$ and $\cV_{L, K}^+:=(\kappa_L^+,\nu, \bm{s} (n),\bm{r} (n))$, where the stepwise kernels $\kappa_L^{\pm}(s, s')$ are given by
\begin{align} \label{eq:step_kernel}
\kappa_L^+(s, s')  
    &:= \sum_{l,l' \in [L]} I_{\cS_L^l}(s) I_{\cS_L^{l'}}(s') \sup_{x \in \cS_L^l, y \in \cS_L^{l'}} \kappa(x,y), \\
\kappa_L^-(s, s')  
    &:= \sum_{l,l' \in [L]} I_{\cS_L^l}(s) I_{\cS_L^{l'}}(s') \inf_{x \in \cS_L^l, y \in \cS_L^{l'}} \kappa(x,y).
\end{align}
Moreover, edges of $G(n, \cV_{L, K}^{\pm})$ are generated as follows. For any pair of vertices $i,j \in [n]$ in a graph $G$, let $E_{ij}(G)$ be the binary indicator that $i$ connects to $j$. For $i,j \in [n]$,$i\neq j$, let $U_{ij} \sim \text{Uniform([0,1])}$, let $E_{ij}(G(n, \cV_{L, K}^-)) = I_{\{U_{ij} \le \kappa_L^- (s_i, s_j)\}}$, $E_{ij}(G(n, \cV)) = I_{\{U_{ij} \le \kappa(s_i, s_j)\}}$, and $E_{ij}(G(n, \cV_{L, K}^+)) = I_{\{U_{ij} \le \kappa_L^+ (s_i, s_j)\}}$.
\end{definition}
By construction of the coupling graphs it holds by $\kappa_L^- \le \kappa \le \kappa_L^+$ that 
$$\mathbb{P}(E_{ij}(G(n, \cV_{L, K}^-)) \le E_{ij}(G(n, \cV_K)) \le E_{ij}(G(n, \cV_{L, K}^+)))=1$$ as desired, and it follows that
\begin{align} \label{eq:D_ineq}
\mathbb{P}(|\mathbb{D}(G(n, \cV_{L, K}^-))| \le |\mathbb{D}(G(n, \cV_K))| \le |\mathbb{D}(G(n, \cV_{L, K}^+))|)=1.
\end{align}
Note that $G(n, \cV_{L, K}^{\pm})$ embeds $G(n, \cV_{L, K}^{d, \pm})$ and we have $|\mathbb{D}(G(n, \cV_{L, K}^\pm))|=|\mathbb{D}(G(n, \cV_{L, K}^{d,\pm}))|$. Then $|\mathbb{D}(G(n, \cV_L^{d,\pm}))|$ can be determined based on the least joint zeros of the functions in \eqref{eq:ode_sol_z_1}. The embedding relations in \eqref{eq:dtc_tau} and \eqref{eq:dtc_deriv} allows us to connect the finite dimensional discrete description to the operators $\Psi_{\kappa_L^\pm}$. We expect that $$\lim_{L \to \infty}\abs{|\mathbb{D}(G(n, \cV_{L, K}^-))| - |\mathbb{D}(G(n, \cV_{L, K}^+))|}=0.$$ This can be shown with the convergence of the operators $\Psi_{\kappa_L^\pm}$ to $\Psi_{\kappa}$ on $\cF_b$ as $L \rightarrow \infty$, as well as the convergence of their fixed points, provided in the following two lemmas.

\begin{lemma}[Uniform convergence] \label{lem:converge}
For $f, h \in \cF_b$ and a monotone sequence $\{f_L\}_{L\in \mathbb{N}} \in \cF_b$ such that $f_L \rightarrow f$ pointwise as $L \rightarrow \infty$, it holds 
\begin{align}
\Psi_{\kappa_L^\pm} f_L \rightarrow \Psi_{\kappa} f,
&& D \Psi_{\kappa_L^\pm} f_L[h] \rightarrow D \Psi_{\kappa} f[h],
\end{align}
uniformly in $\cS$.
\end{lemma}

\begin{lemma}[Uniform convergence of fixed points] \label{lem:fp_conv}
Assume that a least fixed point $\hat{f}$ of $\Psi_{\kappa}$ exists and that there exists a non-negative continuous function $h \in \cF_b$ and small $\epsilon > 0$ such that $D \Psi_{\kappa} \hat{f} [h] - h < - \epsilon \mathbf{1}$. Then each of the $\Psi_{\kappa^\pm_L}$ has a least fixed point, denoted by $\hat{f}_L^\pm$. The fixed point $\hat{f}_L^\pm$ is a step function that is constant on the sets $\{\cS^l_L \}_{l\in L}$. Moreover, as $L \rightarrow \infty$, the following convergences hold:
\begin{align}
\hat{f}_L^\pm \rightarrow \hat{f}  \text{ uniformly}, &&
D \Psi_{\kappa^\pm_L} \hat{f}_L^\pm [h]\rightarrow  D \Psi_{\kappa} \hat{f} [h] \text{ uniformly}, &&
\int_{\cS}\hat{f}_L^\pm \dd \mu \rightarrow \int_{\cS} \hat{f} \dd \mu.
\label{eq:fz_con}
\end{align}
\end{lemma}
\begin{remark}
Lemma~\ref{lem:fp_conv} shows that the least fixed points of the coupling operators $\Psi_{\kappa_L^{\pm}}$ converge to the least fixed points of $\Psi_{\kappa}$. 
While Lemma \ref{lem:converge} follows relatively straightforward from assumptions on $\kappa$ and the definition of $\Psi$, showing fixed point convergence is more involved. In our case an additional complication arises due to the increasing dimension of the step functions which depend on $L$.
\end{remark}

The following proposition determines the final fraction of infected vertices of $G(n, \cV_K)$.

\begin{proposition}[Final fraction of infected vertices for graph with bounded threshold (CSBT)]\label{prop:fp_K}
Let $\hat{f}_K$ be the least fixed point of $\Psi_{\kappa}$. Assume that for some positive continuous function $h \in \cF_b$ and small $\epsilon>0$, the following derivative condition holds:
\begin{align} \label{eq:gd_cond}
D \Psi_{\kappa} \hat{f}_K [h] - h < - \epsilon \mathbf{1}.
\end{align}
Then, the final fraction of infected vertices in $G(n,\cV_K)$ converges:
\begin{align} \label{eq:fp_fz}
n^{-1}|\mathbb{D}(G(n,\cV_K))|\xrightarrow[n\rightarrow \infty]{p} \int_{\cS} \hat{f}_K \dd \mu.
\end{align}
\end{proposition}

\begin{proof}
For $G(n,\cV_{K})$, build the coupling graphs $G(n,\cV_{L,K}^\pm)$ as specified in Definition \ref{def:couple}. It then holds for the final fraction of infected vertices $|\mathbb{D}(G(n,\cV_{K}))|$ that
\begin{align} \label{eq:D_bound}
\mathbb{P}(|\mathbb{D}(G(n, \cV_{L,K}^-))| \le |\mathbb{D}(G(n, \cV_K))| \le |\mathbb{D}(G(n, \cV_{L,K}^+))|)=1.
\end{align}
Let $\hat{f}^\pm_{L,K}$ be the least fixed points of $G(n,\cV_{L,K}^\pm)$, which by Lemma~\ref{lem:fp_conv} are step functions. Construct the corresponding embedded finite-type graphs $G(n,\cV_{L,K}^{d,\pm})$ and let $\mathbf{\what{z}}^\pm_{L,K}$ be the corresponding \textit{least joint zeros} as defined in \eqref{finite:fixed:point}. The relation between the operator of $G(n,\cV_{L,K}^\pm)$ and that of $G(n,\cV_{L,K}^{d,\pm})$ is shown in equations \eqref{eq:step} - \eqref{eq:dtc_tau} when we replace ($\kappa_L$, $f_L$, $\kappa^d_L$, $\mathbf{z}_L$) with ($\kappa^\pm_L$, $\hat{f}^\pm_{L,K}$, $\kappa^{d, \pm}_L$, $\mathbf{\what{z}}^\pm_{L,K}$). We will use such replacement throughout this proof whenever we refer to those equations.

By Lemma~\ref{lem:fp_conv}, there exists a $L_0 \in \mathbb{N}$ such that $\norm{D\Psi_{\kappa^\pm_L}\hat{f}^\pm_{L,K}[h] - D\Psi_{\kappa} \hat{f}_K[h]}_\infty < \epsilon/2$ for $L \ge L_0$. Then with inequality \eqref{eq:gd_cond} it follows
\begin{align}\label{eq:gd_cond_couple}
D\Psi_{\kappa^\pm_L}\hat{f}^\pm_{L,K}[h] - h
    &= (D\Psi_{\kappa^\pm_L}\hat{f}^\pm_{L,K}[h] - D\Psi_{\kappa} \hat{f}_K [h]) + (D\Psi_{\kappa} \hat{f}_K[h] - h) \\
    &<  (\frac{\epsilon}{2} - \epsilon) \mathbf{1}
    = - \frac{\epsilon}{2} \mathbf{1}.
\end{align}
Let $L \ge L_0$, combining equation \eqref{eq:dtc_deriv} with equation \eqref{eq:gd_cond_couple} yields
\begin{align}
\sum_{l' \in [L]} w^{l'} \frac{\d \nu_0^l(\mathbf{\what{z}}^\pm_{L,K})}{\d z^{l'}} <  - \frac{\epsilon}{2}, \quad \forall l \in [L],
\end{align}
which is the condition that allows us to apply Proposition~\ref{prop:fp_z} to $G(n,\cV_{L,K}^{d,\pm})$. Therefore, the final fraction of infected vertices of the embedded graph with finite vertex types converges
\begin{align} \label{eq:D_couple_conv}
n^{-1}|\mathbb{D}(G(n,\cV^\pm_{L,K}))| \xrightarrow[n\rightarrow \infty]{p} \sum_{l\in[L]} \what{z}^{\pm, l}_{L,K} = \int_{\cS} \hat{f}^\pm_{L,K} \dd \mu,
\end{align}
where we use equation \eqref{eq:dtc_tau} on the right hand side.
Finally, we recall inequality \eqref{eq:D_bound}, and apply the convergence Lemma~\ref{lem:fp_conv} to \eqref{eq:D_couple_conv} to obtain
\begin{align}
 n^{-1}|\mathbb{D}(G(n,\cV_{L,K}^+))| 
    \xrightarrow[n\rightarrow \infty]{p}
    \int_{\cS} \hat{f}^+_{L,K} \dd \mu
    \xrightarrow[L \rightarrow \infty]{}
    \int_{\cS} \hat{f}_K \dd \mu, \\
 n^{-1}|\mathbb{D}(G(n,\cV_{L,K}^-))|
    \xrightarrow[n\rightarrow \infty]{p}
    \int_{\cS} \hat{f}^-_{L,K} \dd \mu
    \xrightarrow[L \rightarrow \infty]{}
    \int_{\cS} \hat{f}_K \dd \mu,
\end{align}
and hence the result follows.
\end{proof} 

Finally, given the above results, we are now able to prove the main Theorem~\ref{thm:fp}.
\begin{proof}[Proof of Theorem~\ref{thm:fp}]
Note that $\sum_{k=1}^{\infty} \nu(k,\cS) = 1$ implies that for all $\epsilon > 0$, there is a $K_0 \in \N$ such that for $K \ge K_0$ we have $\sum_{k=K+1}^{\infty} \nu(k, \cS) < \epsilon$.
Define $\epsilon_{K}(\cdot): \cS \rightarrow [0,1]$ by
\begin{align}
\epsilon_{K}(\cdot)
    := \sum_{k=K+1}^{\infty} \nu (k, \cdot), &&
\epsilon_{K} \equiv \epsilon_{K}(\cS)
\end{align}
and note that $\lim_{K \rightarrow \infty }\epsilon_{K} =0$. We build an \textit{upper coupling graph} $G(n,\overline{\cV}_K)$ and a \textit{lower coupling graph} $G(\lfloor(1-\epsilon_K)n\rfloor,\underline{\cV}_K)$ with vertex sequence $\overline{\cV}_K := (\kappa,\overline{\nu}^K,(\bm{s} (n),\bm{r} (n))$ and $\underline{\cV}_K := (\kappa,\underline{\nu}^K,(\bm{s} (\lfloor(1-\epsilon_K)n\rfloor),\bm{r} (\lfloor(1-\epsilon_K)n\rfloor)$. Their vertex distributions are
\begin{align}
\overline{\nu}^K(k,\cdot)
    =  \begin{cases}
        \nu(k,\cdot), &  0\le k \le K-1 \\
        \sum_{k'=K}^{\infty} \nu(k',\cdot), & k = K
        \end{cases}, &&
\underline{\nu}^K(k,\cdot)
    = \frac{\nu(k, \cdot)}{1 - \epsilon_K}, \quad & 0 \le k \le K.
\end{align}
For $G(n,\overline{\cV}_K)$, we clip the largest thresholds to $K$ by mapping all vertices in $G(n,\cV)$ with threshold $k \ge K$ into vertices with only threshold $K$ in $G(n,\overline{\cV}_K)$. Hence, $G(n,\overline{\cV}_K)$ is more vulnerable to infection than $G(n,\cV)$. For the sequence $\underline{\cV}_K$, we exclude vertices with thresholds larger than $K$. Their proportion is $\epsilon_K$. We then consider the graph $G(\lfloor(1-\epsilon_K)n\rfloor,\underline{\cV}_K)$ with only $(1- \epsilon_K)n$ vertices. This is equivalent to considering a graph with $n$ vertices in which the vertices with large threshold values are considered non-infectable. Hence, in absolute numbers $G(\lfloor(1-\epsilon_K)n\rfloor,\underline{\cV}_K)$ has fewer  infected vertices than $G(n,\cV)$. Consequently, by such a construction and again a coupling argument, we have the following inequality:
\begin{align} \label{eq:D_K_ineq}
    \mathbb{P}(n^{-1}|\mathbb{D}(G(\lfloor(1-\epsilon_K)n\rfloor,\underline{\cV}_K))| \le  n^{-1}|\mathbb{D}(G(n,\cV))| \le  n^{-1}|\mathbb{D}(G(n,\overline{\cV}_K))|)=1.
\end{align}
The operators for $G(n,\overline{\cV}_K)$ and $G(\lfloor(1-\epsilon_K)n\rfloor,\underline{\cV}_K)$ are given by
\begin{align}
\overline{\Psi}_K [f](\cdot) 
    &:= \sum_{k=0}^K \frac{\dd \overline{\nu}^K(k, \cdot)}{\dd \overline{\mu}^K(\cdot)}\(1- \sum_{k'=0}^{k-1}P^{k'} [f](\cdot)\), \\
\underline{\Psi}_K [f](\cdot) 
    &:= \sum_{k=0}^K \frac{\dd \underline{\nu}^K(k, \cdot)}{\dd \underline{\mu}^K(\cdot)}\(1- \sum_{k'=0}^{k-1}P^{k'} [f](\cdot)\).
\end{align}

The graphs $G(n,\overline{\cV}_K)$ and $G(\lfloor(1-\epsilon_K)n\rfloor,\underline{\cV}_K)$ are CSBT random graphs and we can therefore determine $|\mathbb{D}(G(\lfloor(1-\epsilon_K)n\rfloor,\underline{\cV}_K))|$ and $|\mathbb{D}(G(n,\overline{\cV}_K))|$ with Proposition \ref{prop:fp_K}. Working through essentially the same techniques as used in Lemma~\ref{lem:converge} and Lemma~\ref{lem:fp_conv}, we can easily show the existence of the least fixed points $\overline{f}_K$ (resp. $\underline{f}_K$) for $\overline{\Psi}_K$ (resp. $\underline{\Psi}_K$), and further prove the following convergence for $K\rightarrow \infty$:
\begin{equation} \label{eq:fz_con_K}
\begin{aligned}
\overline{f}_K, \underline{f}_K
    \rightarrow \hat{f}  \text{ uniformly}, &&
D \overline{\Psi}_{K} \overline{f}_K[h], D \underline{\Psi}_{K} \underline{f}_K[h]  
    \rightarrow  D \Psi_\kappa \hat{f} [h] \text{ uniformly},
\end{aligned}    
\end{equation}
and 
$$\int_{\cS}\overline{f}_K, \int_{\cS}\underline{f}_K  
    \rightarrow \int_{\cS} \hat{f}.$$
Finally, by condition \eqref{eq:gd_cond_N} on the derivative $D \Psi_\kappa$, the second convergence in \eqref{eq:fz_con_K} implies that $D \overline{\Psi}_{K}$ and $D \underline{\Psi}_{K}$ satisfy such derivative condition for $K$ large. Therefore, applying Proposition~\ref{prop:fp_K}, we obtain
\begin{align}
\frac{|\mathbb{D}(G(\lfloor(1-\epsilon_K)n\rfloor ,\underline{\cV}_K))|}{(1-\epsilon_K)n} 
    \xrightarrow[n\rightarrow \infty]{p} \int_{\cS} \underline{f}_K
    \xrightarrow[K\rightarrow \infty]{} \int_{\cS} \hat{f}
    \xleftarrow[K\rightarrow \infty]{} \int_{\cS} \overline{f}_K
    \xleftarrow[n\rightarrow \infty]{p}
    \frac{|\mathbb{D}(G(n,\overline{\cV}_K))|}{n}.\nonumber 
\end{align}
Then, together with the inequality \eqref{eq:D_K_ineq} and $\epsilon_K \rightarrow 0$, \eqref{eq:fp_fz_N} follows.
\end{proof}

\subsection{Resilience}\label{proofs:explained:resilience}
For the proof of non-resilience, one first notes that the existence of a function $h\in \cF_b$ such that $D \wtilde{\Psi} \mathbf{0}[h] > h$ implies by continuity of the derivative, that there exists $\alpha > 0$ such that $\wtilde{\Psi}[a h] >0$ for $a \in (0, \alpha ]$. We will then see that for the operator $\Psi $ of the infected network it actually  holds that $\Psi [a h] \geq \wtilde{\Psi}[a h] >0$ for $a \in (0, \alpha ]$.The lemma below ensures that a fixed point $\hat{f}$ of $\Psi$ (if any) is such that $\hat{f} \geq \alpha h$, which will provide the lower bound. Recall the definition of  $\cF_1^c$ in Equation~\eqref{eq:Fc_def}.
\begin{lemma} \label{prop:no_fix}
If there exists an $h \in \cF_1^c$ and $a_0 \in [0,1]$ such that $\Psi [a h] > a h$ for all $a \in [0, a_0]$, then, for any $g\in \cF_b$ such that $\Psi [g] = g$, it holds that $g\geq a_0 h$.
\end{lemma}

To show resilience, we use Lemma~\ref{lem:converge} together with the embedding \eqref{eq:dtc_deriv} to derive upper approximating kernels $\kappa_L^+$ and coupling graphs $G(n,\cV_L^+)$. The derivative condition then allows one to show that the expected infection  across all generations of the process is bounded by $C \nu(0, \cS)$, where $C>0$ results from a geometric series. A simple Markov estimate then provides the bound for $|\mathbb{D}(G(n,\cV_L^+))|$ (see Proposition~\ref{prop:remain}).

\begin{proof}[Proof of Theorem~\ref{thm:resilience}]
Note that $\mathbf{0}$ is a fixed point of $\wtilde{\Psi}$. First, for part (1), we will show that, given the inequality condition, there exists another fixed point $\wtilde{f} \in \cF_b$ of $\wtilde{\Psi}$ with $\wtilde{f}> \mathbf{0}$, which provides the lower bound of the total infected proportion for $G(n,\cV)$. For such $h$, we can take a small $\epsilon > 0$ such that
\begin{align} \label{eq:d0_cond}
D \wtilde{\Psi}\mathbf{0}h(\cdot)
    &=\Lambda_{\kappa} [h]  (\cdot)\sum_{k=1}^\infty \wtilde{\eta}_k(\cdot) P_{\kappa}^{k-1}[\mathbf{0}] (\cdot) \\
    &= \Lambda_{\kappa} [h] (\cdot)\wtilde{\eta}_1(\cdot) 
    > (1-\epsilon) \Lambda_{\kappa} [h] (\cdot) \wtilde{\eta}_1(\cdot)
    > h(\cdot),
\end{align}
where $\eta_k$ is the Radon-Nikodym derivative defined in \eqref{eq:margin}. Then, take $a \in (0,1)$, by the mean value theorem, there exists a $\xi \in (0,1)$ such that
\begin{align}
\wtilde{\Psi} [\mathbf{0} + ah] - \wtilde{\Psi} \mathbf{0}
    &= D\wtilde{\Psi} [\xi ah] [ah] = \Lambda_{\kappa} [ah] \sum_{k=1}^\infty\wtilde{\eta}_k  P_{\kappa}^{k-1}[\xi ah]\\
    &= a\Lambda_{\kappa}[h]  \wtilde{\eta}_1    \sum_{k=1}^\infty \frac{\wtilde{\eta}_k }{\wtilde{\eta}_1 } P_{\kappa}^{k-1}[\xi ah] \\
    &\ge a\Lambda_{\kappa}[h] \wtilde{\eta}_1 P_{\kappa}^{0}[\xi ah] = a\Lambda_{\kappa}[h]  \wtilde{\eta}_1 e^{- \xi a \Lambda_\kappa h}.
\end{align}
Since $e^{- \xi 0 \Lambda_\kappa h} = \mathbf{1}$ and $e^{- \xi a \Lambda_\kappa h}$ is continuous in $a$ and $h$ is bounded, there exists a small neighborhood $(0, a_\epsilon] \subset (0, 1)$ such that $e^{- \xi a \Lambda_\kappa h(s)} \ge 1-\epsilon$ for all $a \in (0, a_\epsilon]$ and $s \in \cS$. Therefore, with inequality \eqref{eq:d0_cond}, we obtain 
\begin{align} \label{eq:dh_cond}
\wtilde{\Psi}[ah]
    > a\Lambda_{\kappa}[h] \wtilde{\eta}_1(\cdot) (1- \epsilon)
    > a h, \quad \forall a \in (0, a_\epsilon].
\end{align}

Then, expanding the expression of $\Psi[ah]$ and using the relation of $\eta_k$ and $\wtilde{\eta}_k$ indicated by \eqref{eq:resl_nu0k}, we derive the following inequality:
\begin{align}
\Psi[ah]
    &=  \sum_{k=0}^\infty \eta_k\(1- \sum_{k'=0}^{k-1}P_{\kappa}^{k'} [ah]\) \\
    &=  \sum_{k=0}^\infty (\eta_k - \wtilde{\eta}_k)\(1- \sum_{k'=0}^{k-1}P_{\kappa}^{k'} [ah]\) +  \sum_{k=0}^\infty \wtilde{\eta}_k\(1- \sum_{k'=0}^{k-1}P_{\kappa}^{k'} [ah]\)\\
    &= \eta_0 + \sum_{k=1}^\infty (\eta_k - \wtilde{\eta}_k)\(1- \sum_{k'=0}^{k-1}P_{\kappa}^{k'} [ah]\) +  \wtilde{\Psi}[ah]\\
    &= \sum_{k=1}^\infty (\wtilde{\eta}_k - \eta_k) + \sum_{k=1}^\infty (\eta_k - \wtilde{\eta}_k)\(1- \sum_{k'=0}^{k-1}P_{\kappa}^{k'} [ah]\) +  \wtilde{\Psi}[ah]\\
    &= \sum_{k=1}^\infty (\wtilde{\eta}_k - \eta_k)\sum_{k'=0}^{k-1}P_{\kappa}^{k'} [ah] + \wtilde{\Psi}[ah] \ge \wtilde{\Psi}[ah] > ah.
\end{align}
By Lemma~\ref{prop:no_fix}, we know that $\Psi$ does not have a fixed point between $\mathbf{0}$ and $a_\epsilon h$. Therefore, by Theorem~\ref{thm:fp}, the final fraction of infected vertices of $G(n, \cV)$ must be greater than $\alpha := \int_{\cS} a_{\epsilon} h \dd \mu$, where $\alpha$ is independent of the imposed infection $\nu_0$. Hence, we have shown the non-resilient case (1).

For the resilient case (2), we first observe that 
\begin{align}
D \wtilde{\Psi}\mathbf{0} [h]
    = \Lambda_{\kappa} [h] \wtilde{\eta}_1 \ge \Lambda_{\kappa} [h] \eta_1 =D \Psi\mathbf{0} [h],
\end{align}
which implies that $D \Psi\mathbf{0} [h] < h $ by the assumption in (2). Because $D\Psi \mathbf{0} [h]$ and $h$ are continuous, it follows that $D\Psi \mathbf{0} [h] - h$ is continuous, and therefore attains a maximum. Thus, we can find $\epsilon^*$ sufficiently small such that 
\begin{align} 
D\Psi \mathbf{0} [h] - h < -\epsilon^*.
\end{align}
Now following the approximation in Section~\ref {proof:strat:final:frac}, we may find a sequence $\{\cS_{L}^l\}_{l \in [L]}$ of partitions for the type space $\cS$, and coupling kernels $\kappa^+_L\geq \kappa $, such that  $$\norm{D\Psi_{\kappa^+_L} \mathbf{0} [h] - D\Psi \mathbf{0} [h]}_\infty < \frac{\epsilon^*}{2}$$ for $L\geq L_0$. For each $L$, let $h_L (s) = \sum_{l \in [L]} I_{\cS_L^l}(s)\sup_{x \in \cS_L^l} h(x)$. By continuity of the function $h\mapsto D \Psi\mathbf{0} [h]$ there exists $L_1>0$ such that  $\norm{D\Psi_{\kappa^+_L} \mathbf{0} [h] - D\Psi_{\kappa^+_L} \mathbf{0} [h_L]}_\infty <\frac{\epsilon^*}{4}$ for $L\geq L_1$. It follows that $\norm{D\Psi_{\kappa^+_L} \mathbf{0} [h_L] - D\Psi \mathbf{0} [h]}_\infty < \frac{3\epsilon^*}{4}$ for $L \geq \max \{L_0,L_1 \}$. This implies that 
\begin{align} 
D\Psi_{\kappa^+_L} \mathbf{0} [h_L] - h_L < -\epsilon^*/4,
\end{align}
Now by the embedding results in the previous section, it follows from \eqref{eq:dtc_deriv} for the discrete counterpart $\kappa^{d+}_L$ of the kernel $\kappa^{+}_L$ that
\begin{align}
\sum_{l' \in [L]} w^{l'} \frac{\d \nu_0^l(\mathbf{0}_L)}{\d z^{l'}}
    &= - w^l +  \(\sum_{l' \in [L]} \kappa^+_L(l',l) w^{l'}\) \nu_1^l(\mathbf{z}_L) \\
    &= - \int_{\cS_L^l} h_L(s) \dd \mu(s) +  \int_{\cS_L^l} D \Psi_{\kappa_L} \mathbf{0} [ h_L ] (s) \dd \mu(s).
\end{align}
with $w^l=h_L(s) \mu (\mathcal{S}_L^l)$ for $s\in \mathcal{S}_L^l$ and where $\mathbf{0}_L=(0,\dots,0)$. Now along the lines of the proof of Proposition~\ref{prop:remain}, there exists C>0 s.t. 
\begin{align}
n^{-1}|\mathbb{D}(G(n,\cV_L^+))|  \xrightarrow[\nu(0, \cS) \rightarrow 0]{p} 0,
\end{align} which finishes the proof since 
\begin{align}
\mathbb{P}(|\mathbb{D}(G(n,\cV))| \le |\mathbb{D}(G(n,\cV_L^+))|)=1.    
\end{align}
\end{proof}

\section{Applications}\label{case:study}
In Section \ref{sec:con}, we have established results that allow us to determine for $n\rightarrow \infty$ the final fraction of infected vertices at the end of the percolation process based on the least fixed point $\hat{f}$ of $\Psi_{\kappa}$. In this section, we will provide a numerical case study and compare the theoretical result of Theorem~\ref{thm:fp} with the outcome of simulations for random graphs of moderate size. 

We first propose an algorithm to approximate the least fixed point $\hat{f}$ with neural networks. For the simulations, we provide an additional algorithm that for each random sample of the graph allows us to determine the result of the percolation process in a computationally very efficient way. Then we specify a continuous kernel that satisfies Assumption~\ref{ass:kernel} and a vertex sequence that satisfies the regularity Assumption~\ref{ass:regularity}. For each sample from the random graph, we determine the exact result of the contagion process and compare it with the asymptotic $n\rightarrow \infty$ results that we obtain in this paper. For numerical convenience, in this section, we consider the type space $\cS$ to be a compact subset of the real line $\R$. All the codes are available in the GitHub repository (\href{https://github.com/jmlinx/BPRG}{https://github.com/jmlinx/BPRG}).

\subsection{Neural network fixed point approximation}
An $M$-layer ($M\in \N$) neural network $f^{NN}:\R^{d_x} \rightarrow \R^{d_y}$ maps $d_x$ dimensional input to $d_y$ dimensional output and has the form 
\begin{align}
f^{NN} =  \sigma_M \circ W_M \dots \circ \sigma_1 \circ W_1,
\end{align}
where $\circ$ denotes component-wise composition, for $1 \le m \le M$, $d_0 = d_x$, and $d_M=d_y$, $W_m: \R^{d_{m-1}} \rightarrow \R^{d_m}$ are composable affine maps such that
\begin{align}
W_m(\mathbf{x}) = \bs{\theta}_m \mathbf{x} + \mathbf{b}_m,  &&
\bs{\theta}_m \in \R^{d_{m} \times d_{m-1}}, &&
\mathbf{b}_m \in \R^{d_{m}},
\end{align}
and $\sigma_m: \R^{d_m} \to [0,1]^{d_m}$ are some activation functions. The width $d_m$ for $1 \le m \le M$ of the $m$-th layer of the neural network is a hyperparameter to be specified.

Let $\bs{\theta} = \{\bs{\theta}_m, \mathbf{b}_m\}_{m=1}^M$ collect the parameters, and we denote by $f^{NN}(\cdot \ ; \bs{\theta})$ the neural network with parameters $\bs{\theta}$.
The following proposition ensures that the fixed point $\hat{f}$ can be approximated by a neural network.

\begin{proposition}
Let $\cS$ be a compact subset of $\R$. Assume that the least fixed point $\hat{f}$ of $\Psi_{\kappa}$ exists. Then for any $\epsilon >0$, there exists a neural network $f^{NN}(\cdot;\bs{\theta}): \cS \rightarrow [0,1]$, such that
\begin{align}
    \sup_{s\in\cS} \norm{f^{NN}(s; \bs{\theta}) - \hat{f}(s)} < \epsilon, &&  \sup_{s\in\cS} \norm{f^{NN}(s; \bs{\theta}) - \Psi_{\kappa}f^{NN}(s; \bs{\theta})} < \epsilon.
\end{align}
\end{proposition}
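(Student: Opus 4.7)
The plan is to combine two ingredients: (i) the universal approximation property of feed-forward neural networks on compact subsets of $\R$, and (ii) a sup-norm continuity (in fact, Lipschitz) property of the operator $\Psi_{\kappa}$ itself. Step (i) will produce an $f^{NN}$ close to $\hat{f}$, and step (ii) will transfer that closeness to $\Psi_{\kappa} f^{NN}$ via $\hat{f} = \Psi_{\kappa}\hat{f}$, giving both desired bounds by the triangle inequality.

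First I would record that $\hat{f}$ is itself a continuous function on the compact set $\cS \subset \R$. This is immediate from the fixed-point identity $\hat{f} = \Psi_{\kappa}[\hat{f}]$ together with Lemma \ref{prop:psi_lipschitz}, which guarantees $\Psi_{\kappa}[f] \in \cF_1^c$ for every $f \in \cF_b$. Applying a standard universal approximation theorem (e.g., Cybenko/Hornik) to the continuous function $\hat{f}: \cS \to [0,1]$, for any prescribed $\delta>0$ I obtain a neural network $f^{NN}(\cdot; \bs{\theta})$ of the form in the definition, with values in $[0,1]$ (by composing the output with the sigmoid activation $\sigma_1$), satisfying $\sup_{s\in\cS} |f^{NN}(s;\bs{\theta}) - \hat{f}(s)| < \delta$.

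Next I would establish the sup-norm Lipschitz continuity of $\Psi_{\kappa}$. For $f,g \in \cF_b$ one has
\begin{equation}
|\Lambda_{\kappa}[f](s) - \Lambda_{\kappa}[g](s)| \le M_{\kappa}\, \mu(\cS)\, \|f-g\|_{\infty},
\end{equation}
while for each $k\ge 1$ the Poisson tail $\lambda \mapsto 1-\sum_{k'=0}^{k-1} \tfrac{\lambda^{k'}}{k'!} e^{-\lambda}$ has derivative $\tfrac{\lambda^{k-1}}{(k-1)!} e^{-\lambda} \le 1$, hence is $1$-Lipschitz in $\lambda \ge 0$. Summing against the densities $\eta_k$, which satisfy $\sum_k \eta_k \le 1$ as densities of the decomposition of $\mu$ in Assumption \ref{ass:regularity}, yields a constant $C := M_{\kappa}\, \mu(\cS)$ with
\begin{equation}
\|\Psi_{\kappa}[f] - \Psi_{\kappa}[g]\|_{\infty} \le C\, \|f-g\|_{\infty}.
\end{equation}
This is the main (and only non-trivial) technical step; the care needed is the uniform bound on $\sum_k \eta_k P_{\kappa}^{k-1}$ that underlies the Lipschitz estimate.

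Finally I would assemble the conclusion. Choose $\delta := \epsilon/(2+C)$ and pick $f^{NN}$ as above. Then $\|f^{NN} - \hat{f}\|_{\infty} < \delta < \epsilon$, giving the first bound. Using $\hat{f} = \Psi_{\kappa}\hat{f}$ and the Lipschitz property,
\begin{equation}
\|f^{NN} - \Psi_{\kappa} f^{NN}\|_{\infty} \le \|f^{NN} - \hat{f}\|_{\infty} + \|\Psi_{\kappa}\hat{f} - \Psi_{\kappa} f^{NN}\|_{\infty} \le (1+C)\,\delta < \epsilon,
\end{equation}
which yields the second bound. The primary obstacle is verifying the uniform Lipschitz property of $\Psi_{\kappa}$ on $\cF_b$ in sup norm; once that is in place, the rest is a direct application of universal approximation.
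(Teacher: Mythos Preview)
Your proposal is correct and follows essentially the same route as the paper: continuity of $\hat f$ via Lemma~\ref{prop:psi_lipschitz}, then Cybenko's universal approximation theorem for the first bound, then sup-norm continuity of $\Psi_\kappa$ plus the fixed-point identity and the triangle inequality for the second. The only difference is that the paper dispatches the second inequality by pointing to Lemma~\ref{prop:converge} (operator continuity under approximating arguments), whereas you work out an explicit Lipschitz constant $C=M_\kappa\,\mu(\cS)$ for $f\mapsto\Psi_\kappa f$; your version is in fact the more self-contained of the two.
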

\begin{proof}
Recall that the functions $f \in \cF_1^c$ are continuous and that $\hat{f} \in \cF_1^c$. Then the existence of $f^{NN}$ such that the first inequality holds is a direct consequence of the universal approximation theorem by \cite{cybenko1989approximation}. The second inequality can be ensured by possibly further reducing $\sup_{s\in\cS} \norm{f^{NN}(s; \bs{\theta}) - \hat{f}(s)}$, the fixed point property of $\hat{f}$, and by the continuity of $\Psi_{\kappa}$.
\end{proof}

\noindent We propose to approximate the least fixed point by the following procedure. The corresponding pseudocode is provided in Algorithm~\ref{alg:nn}.
\begin{enumerate}
    \item For $j=0$, initialize a neural network $f^{NN}(\cdot;\bs{\theta}_0)$ with parameter $\bs{\theta}_0$.
    \item
    Evaluate the objective function:
    \begin{align}\label{eq:obj}
        J(\bs{\theta}_j) = \frac{1}{N}\sum_{i=1}^N \abs{f^{NN}(x_i; \bs{\theta}_j) - \Psi_{\kappa}^{NN}f_j^{NN}(x_i; \bs{\theta}_j)} + \gamma \int_\cS f^{NN}(s;\bs{\theta}_j)\dd \mu(s),
    \end{align}
    where $0 < \gamma < 1$ is a parameter and $\{x_0, \dots, x_N\}$ are data points sampled from $\cS$. The first term is the mean absolute error between the neural network $f^{NN}$ and $\Psi_{\kappa}^{NN}f^{NN}$ (to be defined below) over the data points.  For small $\gamma$, minimizing the first term ensures convergence to a fixed point. The second term approximates the integrated value of the fixed point. Adding it to the loss function penalizes large values and steers the algorithm to converge towards the least fixed point. The coefficient $\gamma$ should be chosen small enough such that the algorithm will prioritize the convergence to zero of the first term.
    \item Evaluate the gradient $\nabla_{\bs{\theta}} J(\bs{\theta}_j)$ and obtain the new parameter $\bs{\theta}_{j+1}$ with gradient descent
    \begin{align}\label{eq:descent}
        \bs{\theta}_{j+1} = \bs{\theta}_j - \alpha_j \nabla_{\bs{\theta}} J(\bs{\theta}_j), && j \leftarrow j+1,
    \end{align}
    where $\alpha_j$ is the learning rate, and $\nabla_{\bs{\theta}} J(\bs{\theta}_j)$ is calculated by automatic differentiation (\cite{baydin2018automatic}).
    \item Repeat step 2 and step 3 until the stopping criterion $J(\bs{\theta}_{\cdot}) < \epsilon$ is reached for some $\epsilon > 0$. We obtain an approximation $\hat{f}_{\epsilon}^{NN}$ to the least fixed point $\hat{f}$.
\end{enumerate}

\begin{algorithm}[t]
\caption{Neural network approximation of the least fixed point}
\begin{algorithmic}[1]
\Require{Type space $\cS$, kernel $\kappa$, measures $\nu$ and $\mu$.}
\State{Initialization: Initialize neural network $f^{NN}$ with parameter $\bs{\theta}_0$, and sample $N$ points $x_1, \cdots, x_N\ \in \cS$ uniformly. Choose a small stopping criterion $\epsilon>0$.}
\For{$j = 0, 1, \dots$}
    \State{Calculate the objective function $J(\bs{\theta}_j)$ as defined in equation~\eqref{eq:obj}.}
    \State{\textbf{Update} parameters with gradient descent $\bs{\theta}_{j+1} = \bs{\theta}_j - \alpha_j \nabla_{\bs{\theta}} J(\bs{\theta}_j)$ as in equation~\eqref{eq:descent}.}
    \If{$J(\bs{\theta}_j) \le \epsilon$}
        \State{Break.}
    \EndIf
\EndFor
\end{algorithmic}
\label{alg:nn}
\end{algorithm}

\begin{remark}
In the above proposition and algorithm, we explained how a single-layer neural network approximates the least fixed point with a simple gradient descent algorithm. In practice, many specifications can be adapted to speed up the learning process, such as using neural networks multiple layers rather than a single layer, and updating the parameters with more advanced methods such as the Adam algorithm (\cite{kingma2014adam}) instead of standard gradient descent.
\end{remark}
\begin{remark}
Calculating $\Psi_{\kappa}f^{NN}(\cdot)$ requires $\Lambda_{\kappa}f^{NN}(\cdot) = \int_{s \in \cS} \kappa(s, \cdot) f^{NN}(s) \dd \mu(s)$. The integral can be approximated numerically with Riemann sums in the following manner. Consider a data grid of $\cS$ with $M_\cS$ points, denoted by $\cX_\cS = \{s_0, s_1, \dots, s_{M_\cS}\}$. Then calculate
\begin{align}
\Lambda_{\kappa}^{NN} f^{NN}(\cdot; \bs{\theta})
    := \sum_{m=0}^{M_\cS-1} \kappa(s_m, \cdot) f^{NN}(s_m; \bs{\theta})  \mu([s_m, s_{m+1}]) 
    \approx \Lambda_{\kappa} f^{NN}(\cdot; \bs{\theta}),
\end{align}
where the $\Lambda_{\kappa}^{NN}$ operator approximates the integral operator $\Lambda_{\kappa}$ in equation \eqref{eq:operator} by a Riemann sum. We denote by $\Psi_{\kappa}^{NN}$ the operator obtained by replacing the operator $\Lambda_{\kappa}$ in the definition of $\Psi_{\kappa}$ by $\Lambda_{\kappa}^{NN}$. For a sufficiently fine grid it follows that $\Psi_{\kappa}^{NN}f^{NN}(\cdot; \bs{\theta}) \approx \Psi{\kappa}f^{NN}(\cdot; \bs{\theta})$. The integral $\int_\cS \hat{f}_{\epsilon}^{NN}(s;\bs{\theta})\dd \mu(s)$ approximates the $n\rightarrow \infty$ limit of the final fraction of infected vertices and can be calculated via a simple Riemann sum.
\end{remark}

\subsection{Simulations}

To illustrate numerically the convergence of the simulation results to the theoretical results of Theorem~\ref{thm:fp}, we generate random graph realizations, compute the final fraction of infected vertices in each realization, and compare the resulting empirical distribution with the fixed point $\hat{f}$ and its integral $\int_{\cS} \hat{f} \dd \mu$. To this end, we first introduce a new Monte Carlo algorithm for simulating random graphs and computing the outcome of the percolation process. The algorithm is based on straightforward matrix operations and can be implemented efficiently. The details are given in Algorithm~\ref{alg:mc}.

\begin{algorithm}[t]
\caption{Simulation of the bootstrap percolation process}
\begin{algorithmic}[1]
\Require{Type space $\cS$, kernel $\kappa$, maximum threshold $K$, vertex number $n$, measures $\nu$.}
\State{Initialize the type vector $\bL = \(\bL_i\)_{i\in[n]} \in \cS$ and the threshold vector $\bK^0 = \(\bK_i^0\)_{i\in[n]} \in [K]$ where $(\bK_i, \bL_i) \sim \nu$.}
\State{Initialize the newly infected indicator vector $\bI^0 = \(\bI^0_i\)_{i\in[n]} = \(I_{\{0\}}(\bK^0_i)\)_{i\in[n]}$}
\State{Initialize the explored infection indicator vector $\bE^0 = \(\bE^0_i
\)_{i\in[n]} = \(0\)_{i\in[n]} $}
\For{$m = 1, \cdots, M$, where $M$ is the number of realizations,}
    \State{Sample the binary adjacency matrix $\bA = \(\bA_{ij}\)_{i,j\in[n]} \in \{0,1\}$ by $\bA_{ij} = I_{\{U_{ij} \le \kappa(\bL_i, \bL_j)/n\}}$, where $U_{ij} \sim \text{Uniform}(0,1)$ and $I$ denotes the indicator.}
    \For{$p = 0, 1, \dots$} 
        \State{Update the thresholds: $\bK^{p+1} = \(\(\bK^{p} - \bA^T \bI^{p}\)_i\vee 0 \)_{i \in [n]}$.}
        \State{Update the explored infection: $\bE^{p+1} = \bE^{p} + \bI^{p}$.}
        \State{Update the new infection: $\bI^{p+1} = \(I_{\{0\}}(\bK^{p+1}_i)\)_{i\in[n]} - \bE^{p+1}$.}
        \If{no new infection, $\bI^{p+1} = \boldsymbol{0}$, }
            \State{Break.}
        \EndIf
    \EndFor
\State{Compute the final number of infected vertices of type $s$: $\mathbb{D}_m^p(s)=\sum_{i\in[n]}I_{\{(0, s)\}}(\bK^p_i, \bL_i)$.}
\EndFor
\end{algorithmic}
\label{alg:mc}
\end{algorithm}

Let $\mathbb{D}_m(\cS)/n$ be the final fraction of infected vertices for simulation $m$. By the results in Theorem~\ref{thm:fp} we expect this fraction to be close to $\int_{\cS} \hat{f}(s)\dd\mu(s)$ for all $m$. In addition, the empirical result in the next section shows that, for a subset $A \subset \cS$, the simulated final fraction of infected vertices with type in $A$ is approximately $\int_A \hat{f}(s)\dd\mu(s)$. Based on this observation, we believe that Theorem~\ref{thm:fp} can be extended to determine the final fraction of infected vertices with type in a certain subset $A \subset \cS$.

\subsection{Numerical results}
For the numerical experiments, we consider the following three kernel functions defined on the type space $\cS=[0,1]$, with $\mu$ being the uniform measure on $\cS$:
\begin{equation} \label{eq:kernels}
\begin{aligned} 
\kappa_1(x,y) &= \frac{10\sqrt{x^2+y^2}}{1+\sqrt{\abs{x-y}}}, \\
\kappa_2(x, y) &= \frac{5(e^{x+0.5y} - 1)}{1 + \sqrt{\abs{x-y}}}, \\
\kappa_3(x, y) &= \frac{x+y}{\abs{x-y} + \abs{x-1} + \abs{y-1}}.
\end{aligned}
\end{equation}
Figure~\ref{fig:kernels} visualizes the three kernels. All three kernels, $\kappa_1$, $\kappa_2$, and $\kappa_3$, exhibit the desired properties of \emph{heterogeneity} and \emph{blocking}. Heterogeneity arises from the fact that the connection probability varies across vertex types, as illustrated by the non-flat surfaces in the figures. Blocking exhibits the phenomena that vertices of similar types have higher connection probabilities, indicated by the ridge structure in each surface. In particular, $\kappa_2$ exhibits stronger heterogeneity than $\kappa_1$ due to its exponentially increasing numerator and also introduces asymmetry through the term $x + 0.5y$. The kernel $\kappa_3$ behaves even more extreme, exploding at the point $(1,1)$.
\begin{figure}[h]
\begin{center}
\begin{subfigure}{0.32\linewidth}
  \includegraphics[trim=4.5cm 3cm 4.5cm 1cm, clip, width=\linewidth]{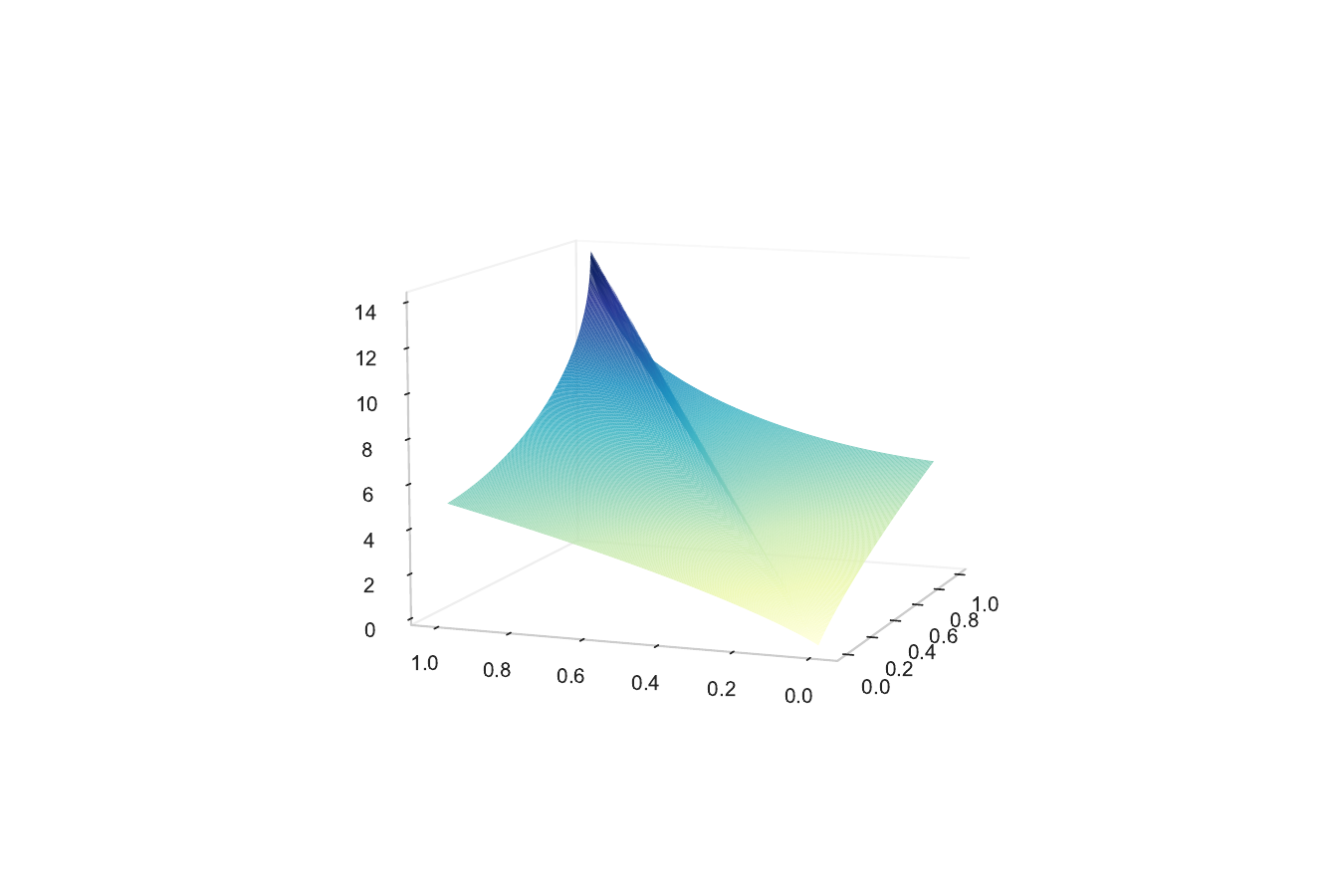}
  \caption{}
\end{subfigure}
\begin{subfigure}{0.32\linewidth}
  \includegraphics[trim=4.5cm 3cm 4.5cm 1cm, clip, width=\linewidth]{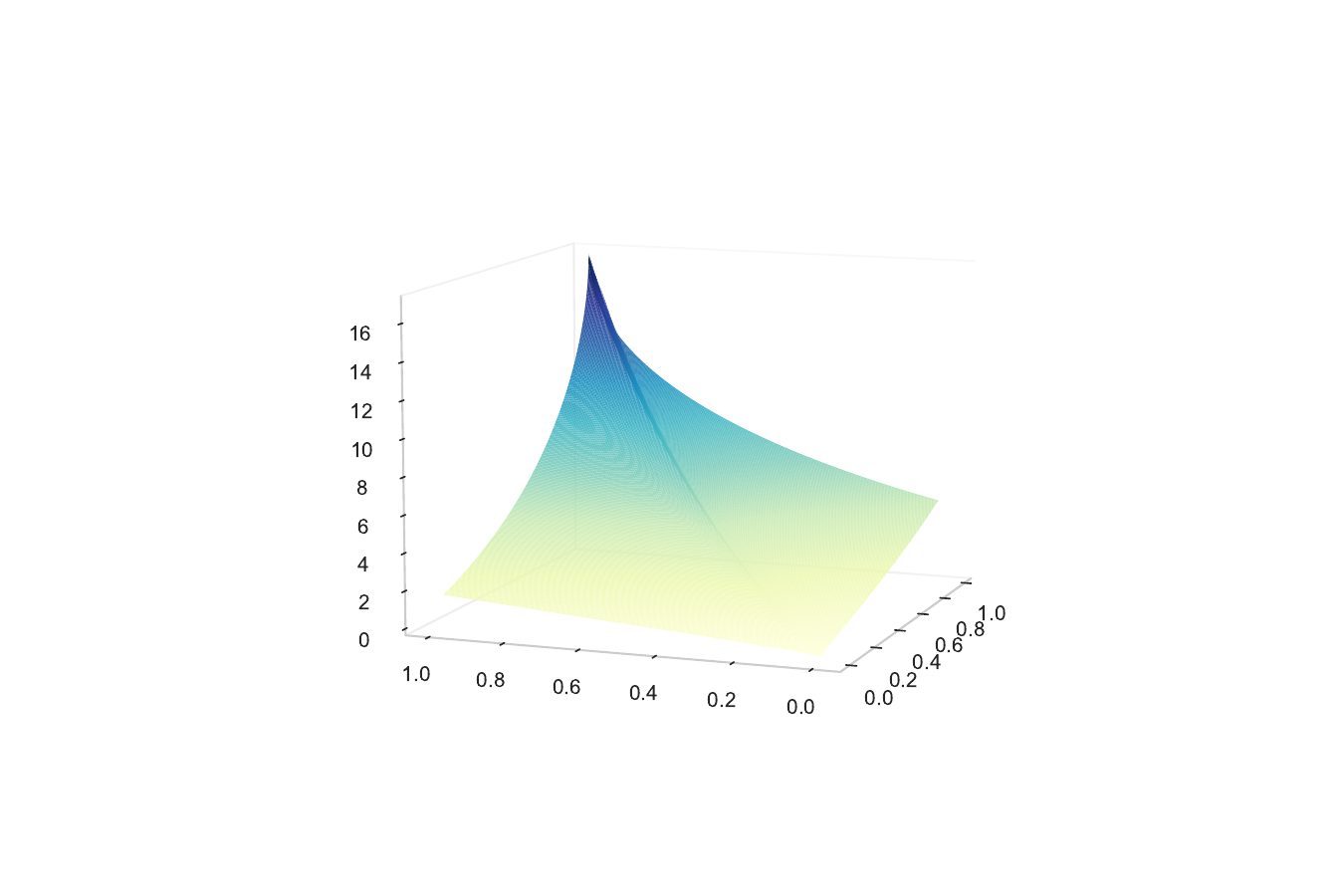}
  \caption{}
\end{subfigure}
\begin{subfigure}{0.32\linewidth}
  \includegraphics[trim=4.5cm 3cm 4.5cm 1cm, clip, width=\linewidth]{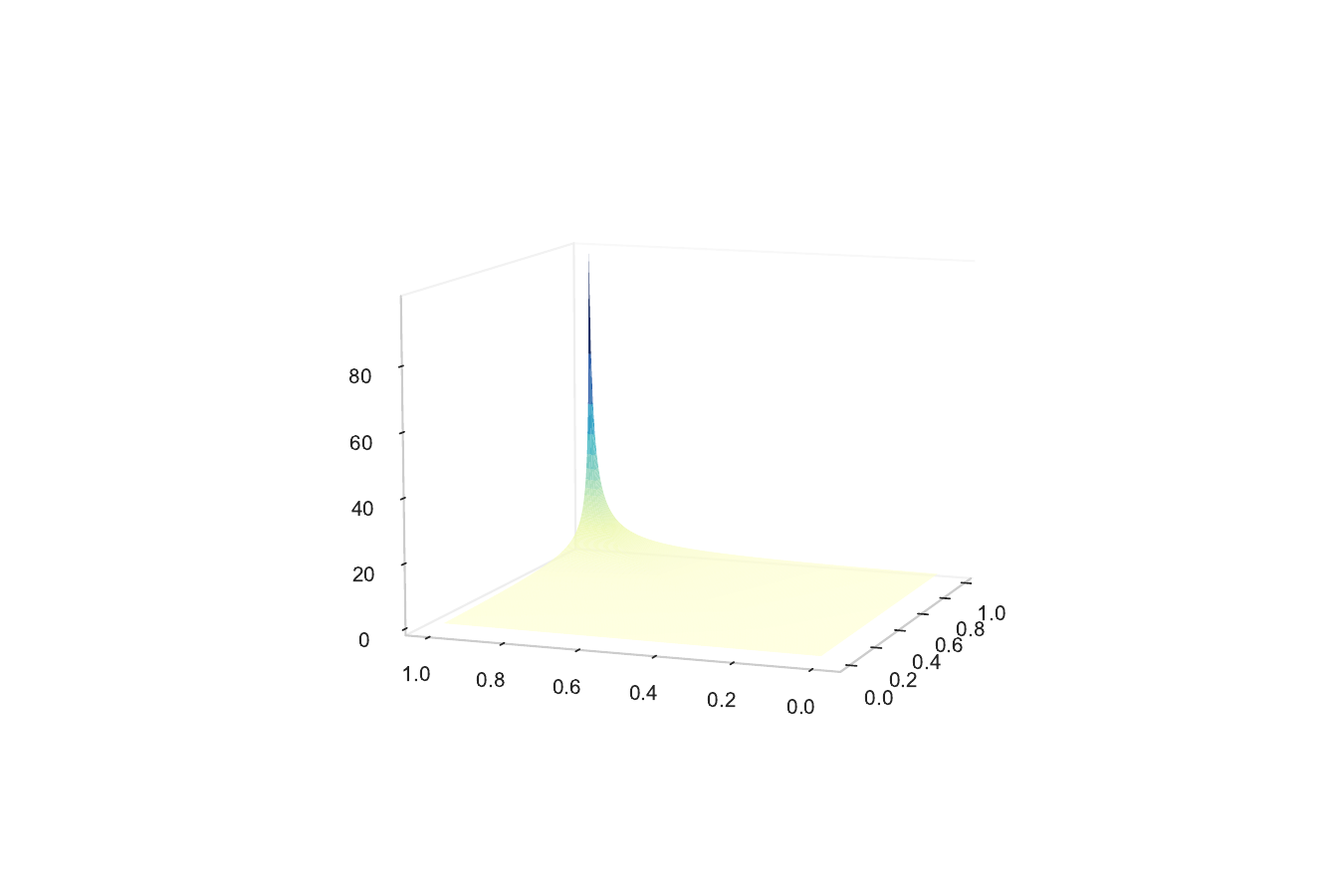}
  \caption{}
\end{subfigure}
\caption{
Kernel functions in Equation~\eqref{eq:kernels}: (a) $\kappa_1$, (b) $\kappa_2$, and (c) $\kappa_3$.
}
\label{fig:kernels}
\end{center}
\end{figure}
The bootstrap percolation is considered in a setting where $10\%$ of vertices are initially infected and all remaining vertices have threshold $2$. The initial infection is uniformly distributed among the vertices and independent of the type, which corresponds to
\begin{align}
\nu(k, [a,b]) = I_{\{k=0\}}\frac{1}{10}\abs{b-a} + I_{\{k=2\}}\frac{9}{10}\abs{b-a}.
\end{align}

We first report the numerical results for $\kappa_1$. The fixed point is computed using Algorithm~\ref{alg:nn}. The neural network consists of two hidden layers with $20$ nodes each, followed by hyperbolic tangent activation function. The parameters are optimized with Adam algorithm. In Figure~\ref{fig:func_lips}(a), the overlap of the functions $f^{NN}$ and $\Psi_{\kappa}^{NN} f^{NN}$ indicates that the fixed point has been accurately identified. We compare the fraction of infected vertices obtained from the fixed point with the fraction obtained by Monte Carlo simulation via Algorithm~\ref{alg:mc}. We generate $M=1000$ random graphs with $n=3000$ vertices and determine for each sample the result of the bootstrap percolation process. The type space is discretized into $L=1000$ equal bins, and the final fraction of infected vertices is calculated for each type. Figure~\ref{fig:func_lips}(b) compares the scatter plot of simulated fractions $f^{MC}$ with the fixed point function $f^{NN}$. Each point corresponds to the infected fraction (scaled by the width of the bin) of one of the $L$ types of one of the $M$ simulations. The simulated $f^{MC}$ scatters align with the function $f^{NN}$, indicating a strong agreement between the simulations and the theoretical asymptotic result given by Theorem~\ref{thm:fp}.

\begin{figure}[h]
\begin{center}
\begin{subfigure}{0.48\linewidth}
  \includegraphics[clip, width=\linewidth]{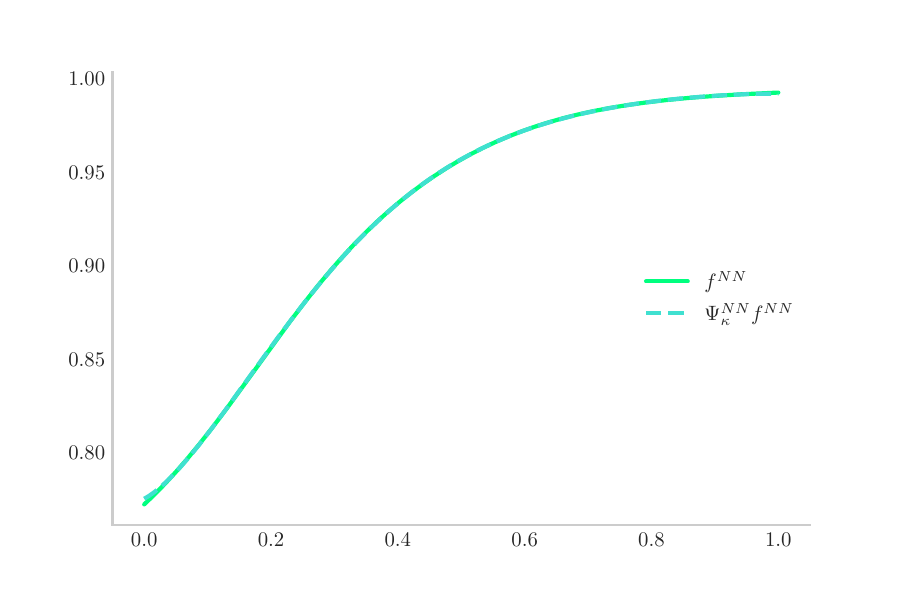}
  \caption{}
\end{subfigure}
\begin{subfigure}{0.48\linewidth}
  \includegraphics[clip, width=\linewidth]{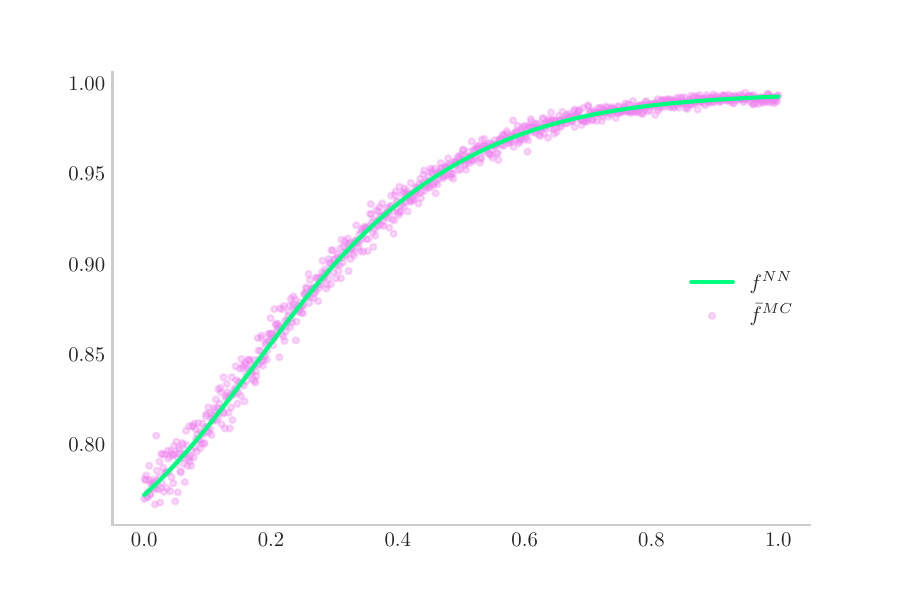}
  \caption{}
\end{subfigure}
\caption{
Fixed point, its image, and simulated final fractions for $\kappa_1$. (a) Fixed point $f^{NN}$ approximated with Algorithm~\ref{alg:mc} and its image $\Psi_{\kappa}^{NN} f^{NN}$. (b) $f^{NN}$ and simulated final fractions $f^{MC}$. Horizontal axis: vertex type; vertical axis: function value or infected fraction.
}
\label{fig:func_lips}
\end{center}
\end{figure}

Figures~\ref{fig:func23}(a) and (b), respectively, report the corresponding results for $\kappa_2$ and $\kappa_3$, following the same procedure as used to generate Figure~\ref{fig:func_lips}. For conciseness, we overlay the neural fixed point, its image, and the simulated fractions in each plot. In both cases, the theoretical fixed point aligns well with the simulation results. At first glance, the scatter in Figure~\ref{fig:func23}(b) appears more dispersed than in Figure~\ref{fig:func_lips}(b) and Figure~\ref{fig:func23}(a). This effect is due to lower connection probabilities and the resulting smaller final infected fraction for the kernel $\kappa_3$. Taking into account the scale of the vertical axis, we observe that the absolute deviation of the random outcomes from the $n\rightarrow \infty$ result is less than 2\% across all three kernels. This highlights that the asymptotic results derived in Theorem~\ref{thm:fp} approximate the outcome of the infection process very well even for random graphs of moderate size.

\begin{figure}[h]
\begin{center}
\begin{subfigure}{0.48\linewidth}
  \includegraphics[clip, width=\linewidth]{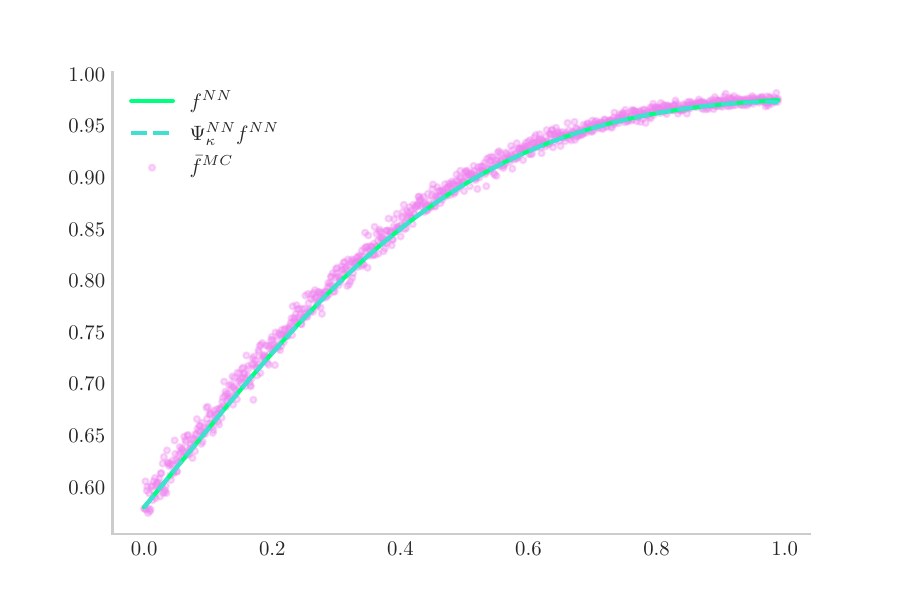}
  \caption{}
\end{subfigure}
\begin{subfigure}{0.48\linewidth}
  \includegraphics[clip, width=\linewidth]{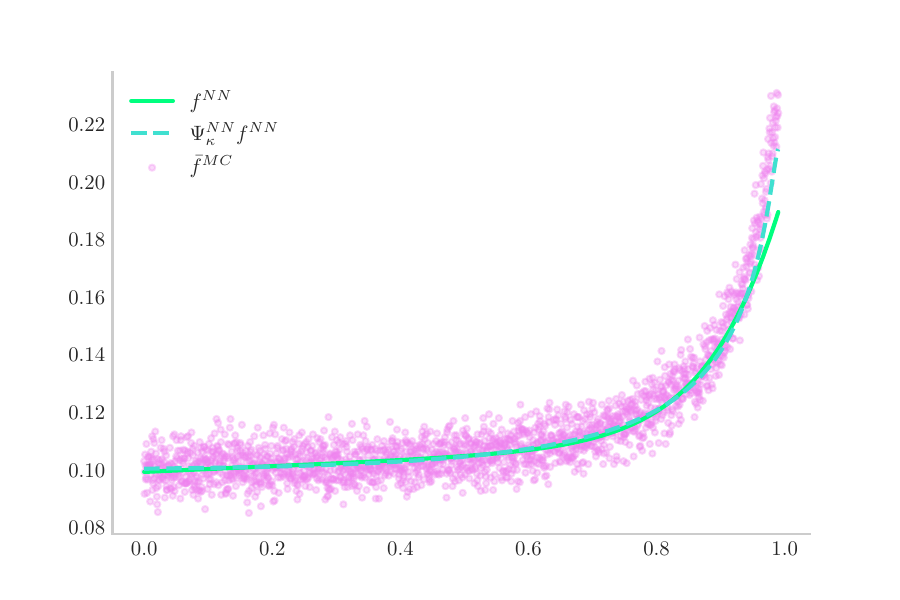}
  \caption{}
\end{subfigure}
\caption{
Fixed point, its image, and simulated final fractions for (a) $\kappa_2$ and (b) $\kappa_3$. Horizontal axis: vertex type; vertical axis: function value or infected fraction.
}
\label{fig:func23}
\end{center}
\end{figure}

Finally, we report the distribution of simulated final fractions of infected vertices for $\kappa_1$; the results for $\kappa_2$ and $\kappa_3$ are similar and thus omitted. We compute the integral of the fixed point as $\hat{\tau} := \int_{\cS} \hat{f}\dd\mu \approx \tau^{NN} :=\int_\cS f^{NN} \dd \mu=0.9273$. By Theorem~\ref{thm:fp}, the final fraction of infected vertices in a large graph is therefore expected to be $92.73\%$. Figure~\ref{fig:ci1}(a) shows the distribution of final fractions obtained from the $M=1000$ simulations with $n=3000$ vertices. The results tightly concentrate between $90\%$ and $94\%$ and are centered near the theoretical value $92.73\%$. The average simulated final fraction is $92.70\%$.

Since $\hat{\tau}$ quantifies the asymptotic limit as $n \rightarrow \infty$, we further perform simulations with graph size $n=200, 400, \dots, 10000$, using $M=1000$ realizations for each size. Figure~\ref{fig:ci1}(b) reports the average simulated final fractions of infected vertices $\tau^{MC}$ together with its $95\%$ confidence band. As expected, the confidence band narrows as the graph size increases.

\begin{figure}[h]
\begin{center}
\begin{subfigure}{0.48\linewidth}
  \includegraphics[clip, width=\linewidth]{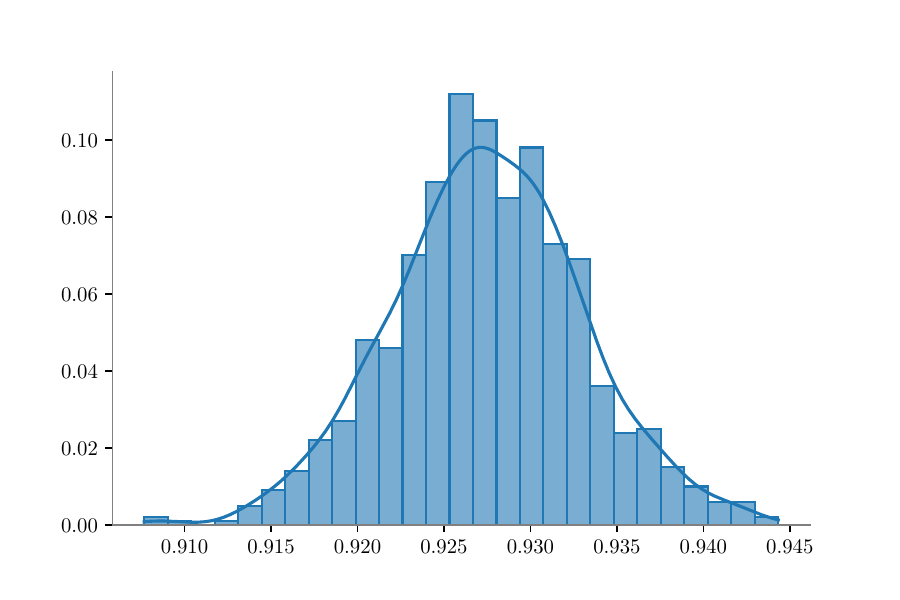}
  \caption{}
\end{subfigure}
\begin{subfigure}{0.48\linewidth}
  \includegraphics[clip, width=\linewidth]{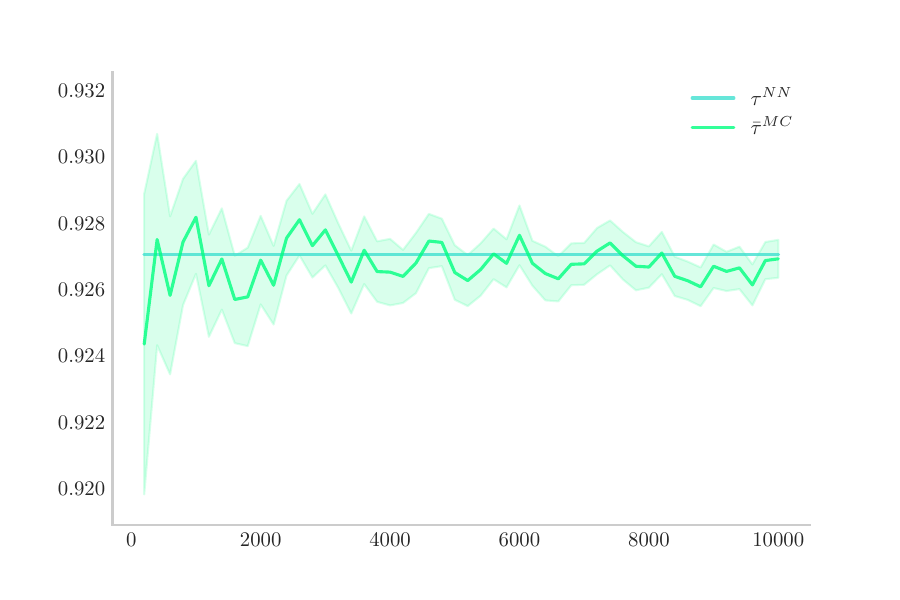}
  \caption{}
\end{subfigure}
\caption{(a) Histogram and kernel density estimation of $M=1000$ simulated final fractions of infected vertices for graph of size $n=3000$. Horizontal axis: final fraction; vertical axis: frequency. (b) average simulated final fractions of infected vertices and $95\%$ confidence band for graphs with sizes $n = 200, 400, \dots, 10000$, based on $M=1000$ simulations per size. Horizontal axis: graph size; vertical axis: final fraction.
}
\label{fig:ci1}
\end{center}
\end{figure}

In conclusion, through numerical experiments with the three kernel functions, we validate that the final fraction of infected vertices formulated by the fixed point in Theorem~\ref{thm:fp} and computed via Algorithm~\ref{alg:nn} agrees closely with Monte Carlo simulation results obtained using Algorithm~\ref{alg:mc} already for random graphs of moderate size. Moreover, the fixed point approach is computationally more efficient for obtaining an approximation of the default fraction compared to a Monte Carlo simulation and avoids the sampling error.

\appendix
\section{Random graphs with a finite number of vertex types} \label{sec:dis}
In this section, we study the FTBT graph with a finite number of vertex types and finite thresholds introduced in Definition \ref{def:dis_sys} and we prove Proposition \ref{prop:fp_z}. Let $K \in \N$ and we assume that $r_i(n) \in [K] \cup \{0\}$ for all $i\in [n]$. We denote the vertex sequence by $\cV_{L,K}^d:=(\kappa^d_L,\nu, \bm{s} (n),\bm{r} (n))$ with $K$ explicit in the subscript and the corresponding graph by $G(n,\cV_{L,K}^d)$. For a neater notation, across this section we use $\kappa$ instead of $\kappa^d$ but it should be kept in mind that we have a discrete kernel.

Instead of exploring the final set of infected vertices $\mathbb{D} (G(n,\cV_{L,K}^d))$ by generations as described in Section~\ref{sec:con}, we use a \textit{sequential process} that results in the same set of infected vertices. The idea is that in each step we only explore the effect on the system triggered by one infected vertex. For this, at the beginning of each iteration, we uniformly select an infected vertex from the set of unexplored infected vertices. Denote this vertex by $i$. We then explore all edges that vertex $i$ sends to uninfected vertices and reduce the threshold value of each receiving vertex by $1$. If the threshold of a receiving vertex reaches $0$, then this vertex is infected and we include it in the set of unexplored infected vertices. Then, after we have updated the threshold of all vertices that received an edge from vertex $i$, we remove the vertex $i$ from the set of unexplored infected vertices. The effect of this vertex has now been explored. We repeat the iteration until there are no more unexplored infected vertices left. 

{\em Sequential exploration process:} To formulate the sequential process, we introduce $t \in \mathbb{N}_0$ to index the $t$-th iteration. We first define for $l\in [L]$ and $k\in [K]\cup \{0\}$ the sets 
\begin{align}
U_k^l(0) := \{i \in \[n\] | s_i = l, r_i(0) = k\},  &&
u_k^l(0) := |U_k^l(0)|,
\end{align}
where $U_k^l(0)$ is the (initial) set of type $l$ vertices with threshold $k$. Note that there is a slight abuse of notation in relation to the sets $U^{(n)}_k(A)$ defined in Assumption~\ref{ass:regularity}. We further denote by $u_k^l(0)$ their size. Moreover, by Definition \ref{def:dis_sys}, it holds that the initial proportion $u_k^l(0)/n$ converges to a limit $\nu_k^{l}$:
\begin{align}\label{conv:finite:type}
u_k^l(0)/n \xrightarrow[n\rightarrow \infty]{} \nu_k^{l},
\end{align}
where $\sum_{l \in [L]}\sum_{k=0}^K\nu_k^{l}=1$.

We keep track of the evolution of the thresholds of the vertices throughout the iterations. For any vertex $i \in [n]$ at iteration $t$, its threshold $0 \le r_i(t) \le r_i(0)$ equals to $r_i(0)$ minus the number of edges it has received from explored infected vertices. For each $t\geq 1$ we therefore group vertices by their types and (current) thresholds:
\begin{align}
U_k^l(t) := \{i \in \[n\] | s_i = l, r_i(t) = k\},  &&
u_k^l(t) := |U_k^l(t)|,
\end{align}
We also use $U_k(t)$ to denote all the vertices with threshold $k$ at iteration $t$, and $u_k(t)$ to denote their size:
\begin{align}
U_k(t) = \bigcup_{l \in [L]} U_k^l(t),  &&
u_k(t) = \sum_{l\in[L]}u_k^l(t).
\end{align}
We stress that for $t>0$, the sets $U_k^l(t), U_k(t)$ and their size $u_k^l(t)$ and $u_k(t)$ are actually random quantities. Note that after each iteration, we drop the selected and explored infected vertex from the set $U_0(t)$, i.e. if $i\in U_0(t)$ is selected, then $i \notin U_0(t')$ for all $t'>t$. It holds that $u_0(t+1) \ge  u_0(t) - 1$.

Additionally, we track the random sets
\begin{align}
\mathbb{D}(t, G(n,\cV_{L,K}^d))
    &:= \{\text{infected vertices explored up to iteration } t\}, \\
\mathbb{D}(G(n,\cV_{L,K}^d))
    &:= \{\text{infected vertices at the end of the process}\}.
\end{align}
By the nature of the sequential process we know that $u_0(0) \le |\mathbb{D}(G(n,\cV_{L,K}^d))|\le n$.

As explained above, at iteration $t$, we uniformly select an unexplored infected $i\in U_0(t)$. Its threshold is $r_i(t)=0$. The probability that the type $s_i$ of vertex $i$ is equal to $l$ is given by $\tfrac{u_0^l(t)}{u_0(t)}$. Then, we reveal all the vertices that receive an edge from $i$. If a vertex $j$ with type $s_j$ and threshold $r_j(t)=k>0$ receives an edge from $i$, which happens with probability $\kappa(s_i, s_j)/n$, then we set $r_j(t+1) := k - 1$. Henceforth in the next iteration $t+1$, $j$ has moved to the set $U_{r_j(t+1)}^{s_j}(t+1)=U_{k-1}^{s_j}(t+1)$. After all other vertices receiving an edge from $i$ are examined, we consider the vertex $i$ as explored and add it to the set $\mathbb{D}(t, G(n,\cV_{L,K}^d)) = \mathbb{D}(t-1, G(n,\cV_{L,K}^d)) \cup \{i\}$ and remove it for future explorations: $i \notin U_0(t+1)$.

Let $h(t):= (u_k^l(t))_{k \in [K]\cup \{0\}, l \in [L]}$ describe the state of the entire percolation process at iteration $t$. According to the algorithm discussed above, for each type $l \in [L]$, we can write down the expected change of the sizes of the vertex sets by the following equations
\begin{equation}\label{eq:sys_all}
\begin{aligned} 
\Eb[u_0^l(t+1) - u_0^l(t) | h(t)]
    &= - \frac{u_0^l(t)}{u_0(t)} + \(\sum_{l' \in [L]} \frac{u_0^{l'}(t)}{u_0(t)}  \frac{\kappa(l',l)}{n}\)u_1^{l}(t), \\
 \Eb[u_k^l(t+1) - u_k^l(t) | h(t)]
    &=  \(\sum_{l'\in[L]} \frac{u_0^{l'}(t)}{u_0(t)} \frac{\kappa(l',l)}{n}\) \(u_{k+1}^{l}(t) - u_k^{l}(t)\), \ \forall k \in [K-1]\\
\Eb[u_K^l(t+1) - u_K^l(t) | h(t)]
    &= - \(\sum_{l'\in[L]} \frac{u_0^{l'}(t)}{u_0(t)} \frac{\kappa(l',l)}{n}\) u_K^{l}(t).
\end{aligned}
\end{equation}
Consequently, the expected change of the sizes of the entire vertex sets across each threshold can be summarized by
\begin{equation}
\begin{aligned}
\Eb [u_0(t+1) - u_0(t) | h(t)]
    &= \sum_{l \in [L]} \Eb[u_0^l(t+1) - u_0^l(t) | h(t)], \\
\Eb [u_k(t+1) - u_k(t) | h(t)]
    &= \sum_{l \in [L]} \Eb[u_k^l(t+1) - u_k^l(t) | h(t)], \ \forall k \in [K-1], \\
\Eb [u_K(t+1) - u_K(t) | h(t)]
    &= \sum_{l \in [L]} \Eb[u_K^l(t+1) - u_K^l(t) | h(t)].
\end{aligned}
\end{equation}

To understand the system of equations \eqref{eq:sys_all}, let us focus on vertices of type $l \in [L]$ and the step from iteration $t$ to $t+1$. For the first equation, the expected change of the number of infected vertices of type $l$ consists of two parts. First, one already infected vertex is picked uniformly at $t$ and this vertex is of type $l$ with probability $\frac{u_0^l(t)}{u_0(t)}$. This vertex is explored and then excluded from the set $U_0^l(t+1)$, which corresponds to the term $-\frac{u_0^l(t)}{u_0(t)}$. The second positive term accounts for the newly infected vertices of type $l$ in iteration $t$, which are those vertices in $U_1^l(t)$ that get infected by receiving an edge from the currently explored vertex. As the sequential process reduces thresholds by at most one in each iteration, only the set $U_1^l(t)$ contributes to new infections. The probability to select a vertex of type $l'$ for exploration in iteration $t$ is $\frac{u_0^{l'}(t)}{u_0(t)}$. Conditioning on the type of the vertex selected being of type $l'$, the probability for it to connect to a vertex in the set $U_1^l(t)$ is $\frac{\kappa(l', l)}{n}$. Hence, by summing over all vertices in $U_1^l(t)$ we obtain the second term. For threshold $k \in [K-1]$, the change of the set $U_k^l(t)$ results from the vertices in $U_{k+1}^l(t)$, which receive an edge from the explored vertex and are therefore added to $U_k^l(t+1)$. The negative term comes from those vertices in $U_k^l(t)$ that are added to $U_{k-1}^l(t+1)$ because they receive an edge from the currently explored vertex. Lastly, the number of vertices in $U_K(t)$ decreases by the number of vertices in $U_K(t)$ that receive an edge from the vertex currently being explored.

{\em Approximation with ODEs:}
We will approximate the components of $h(t)/n$ in the system \eqref{eq:sys_all} using the method proposed in \cite{wormald_differential_1995}. For this, let the vector $(\rho_k^l(t/n))_{k \in [K]\cup \{0\}, l \in [L]}$ of functions, where $\rho_k^l: [0,1] \rightarrow [0,1]$ solves the following system of ordinary differential equations:
\begin{equation} \label{eq:ode}
\begin{aligned}
\frac{\dd \rho_0^l(\tau)}{\dd \tau}
    &= - \frac{\rho_0^l(\tau)}{\rho_0(\tau)} + \(\sum_{l' \in [L]}  \frac{\rho_0^{l'}(\tau)}{\rho_0(\tau)} \kappa(l',l)\) \rho_1^l(\tau),\\
\frac{\dd \rho_k^l(\tau)}{\dd \tau}
    &=  \(\sum_{l'\in[L]} \frac{\rho_0^{l'}(\tau)}{\rho_0(\tau)} \kappa(l',l)\) \(\rho_{k+1}^{l}(\tau) - \rho_k^{l}(\tau)\), \ \forall k \in [K-1], \\
\frac{\dd \rho_K^l(\tau)}{\dd \tau}
    &= - \(\sum_{l'\in[L]} \frac{\rho_0^{l'}(\tau)}{\rho_0(\tau)} \kappa(l',l)\) \rho_K^{l}(\tau),
\end{aligned}
\end{equation}
with the initial condition $\rho_k^l(0) = u_k^l(0)/n$ for $k \in [K]\cup\{0\}$.

The solution for the system \eqref{eq:ode} is given by
\begin{equation} \label{eq:ode_sol}
\begin{aligned}
\rho_0^l(\tau)
    &= - \beta^l(\tau) + \rho_{0}^l(0) + \sum_{k' \in [K]} \rho_{k'}^l(0) \( 1 - \sum_{k''=0}^{k'-1} p(k'',\phi^l(\tau)) \),\\
\rho_k^l(\tau)
    &= \sum_{k'=k}^K \rho_{k'}^l(0) p(k' -k,\phi^l(\tau)), \ \forall k \in [K-1], \\
\rho_K^l(\tau)
    &= \rho_K^l(0) p(0, \phi^l(\tau)),
\end{aligned}
\end{equation}
where
\begin{align}
p(k, \phi) := \frac{\phi^k}{k!}e^{-\phi}, &&
\beta^l(\tau) := \int_0^{\tau} \frac{\rho_0^{l}(s)}{\rho_0(s)} \dd s, &&
\phi^l (\tau) := \sum_{l' \in [L]} \kappa(l',l) \beta^{l'}(\tau).
\end{align}
The following lemma asserts the condition to apply \cite[Theorem~2]{wormald_differential_1995}, which, after some additional steps, will allow us to approximate the quantities $(u_k^l(t))_{k \in [K]\cup \{0\}, l \in [L]}$ of the sequential exploration process for large $n$ with high probability by the solution of the differential equation system \eqref{eq:ode}.
\begin{lemma} \label{prop:wormald_cond}
Let $\rho (0,\mathcal{S})>0$ and $f_k^l := f_k^l(\tau, (\rho_k^l)_{k \in [K]\cup \{0\}, l \in [L]}):= \dd \rho_k^l / \dd \tau$ for $l\in [L],k \in [K]\cup\{0\}$. The following holds for $n$ large enough:
\begin{enumerate}
\item For $\delta>0$, the functions $f_k^l,l\in [L],k \in [K]\cup\{0\}$ defining the right hand side of \eqref{eq:ode} fulfill a Lipschitz condition on the domain
\begin{align} \label{eq:lipschitz}
\mathcal{D}_{\delta} = \left\{\( \tau, \(\rho_k^l\)_{\substack{k \in [K]\cup \{0\} \\ l \in [L]}} \)\in \R^{(K+1)L + 1}: 0<\tau<1, 0< \rho_k^l(\tau) < 1,  \sum_{l \in [L]} \rho_0^{l}(\tau) > \delta  \right\}.
\end{align}
\item There exist functions $\omega = \omega(n)$ and $\gamma=\gamma(n)$ with $\gamma \rightarrow \infty$ as $n \rightarrow \infty$ and $\gamma^4 \log n < \omega < n^{2/3}/\gamma$ such that
\begin{align}
\Pb\(\abs{u_k^l(t+1) - u_k^l(t)} > \frac{\sqrt{\omega}}{\gamma^2 \sqrt{\log n}} \mid h(t) \) = o(n^{-3})
\end{align}
for all $k \in [K]\cup\{0\}$ and $l \in [L]$.
\end{enumerate}
\end{lemma}
\begin{proof}
Note that 1. follows directly from \eqref{eq:ode} and the fact that $\kappa$ is bounded and $\rho_0 > \delta$ for $n$ large by the assumption $\rho (0,\mathcal{S})>0$.

For 2. As in \cite{meyer-brandis_bootstrap_2019}, choose $\omega(n) = B^2 n^{25/48}$ with $B > 0$ constant and $\gamma(n) = n^{1/8}$, then 
\begin{align}
\frac{\sqrt{\omega}}{\gamma^2 \sqrt{\log n}} = \frac{B n^{1/96}}{\sqrt{\log n}}.
\end{align}
A rough bound can be derived from the maximal degree of all vertices in the graph. This gives
\begin{align}
\Pb\(\abs{u_k^l(t+1) - u_k^l(t)} \ge d \mid h(t)\) \le n \binom{n-1}{d} \(\frac{M}{n}\)^d \le n \frac{M^{2d}}{d!}
\end{align}
where $M:= \sup_{l\in[L]} \sum_{l' \in [L]} \kappa(l', l)$. Noting that $d! \ge n^5 M^{2d}$ for $d \ge n^{1/100}$ and large $n$ completes the proof.
\end{proof}

Therefore by \cite{wormald_differential_1995}, it holds for the system \eqref{eq:sys_all} that
\begin{align}
u_k^l(t) = n \rho_k^l(t/n) + o(n), \ \forall k \in [K]\cup\{0\}, \ l \in [L],
\label{eq:approx}
\end{align}
with probability $1-o(1)$ within the domain $\mathcal{D}_{\delta}$ where $0 <t/n < 1$ and $\sum_{l \in [L]} \rho_0^{l}(t/n) > \delta$.

A heuristic interpretation of the solution \eqref{eq:ode_sol} is as follows: At the iteration $\tau$ of the percolation, for a vertex of type $l \in [L]$, its degree to the set of infected vertices is approximately Poisson distributed with parameter $\phi^l(\tau)$. Note that $p(k, \phi)$ is exactly defined in the form of the probability mass function of a Poisson random variable with parameter $\phi$. In other words, for $0 \le k \le k' \le K$,  a vertex of type $l$ and initial threshold of $k$ has probability $p(k' - k, \phi^l(\tau))$ to be updated to a threshold of $k'$ at iteration $\tau$. Therefore, at iteration $\tau$, the fraction of vertices of type $l$ and threshold $k$ is the sum over the fractions of all other vertices with initially threshold $k' \ge k$ that receive exactly $k'-k$ infectious connections.

We are interested in the iteration $\hat{t}$ when the sequential exploration stops, i.e. $U(\hat{t})=\emptyset $ and $U(t)\neq \emptyset $ for $t<\hat{t}$. It is clear that $\hat{t} = |\mathbb{D}(G(n,\cV_{L,K}^d))|$ is the nature of the exploration process and we again stress that $\hat{t}$ is actually a random time. Because the approximation only holds in $\mathcal{D}_\delta$ and thus stops to hold before the sequential exploration comes to the end, we shall first study the iteration index when the number of unexplored infected vertices reaches a small proportion $\delta > 0$. Then, we derive a condition that ensures that when the fraction of unexplored infected vertices approaches zero, the process actually ends - it cannot rebound and trigger new infections. That is, with $\hat{t}^\delta$ the first time when $u_0(\hat{t}^\delta)/n \leq \delta$, we need to ensure that $\hat{t}^\delta/n\xrightarrow[]{p}\hat{t}/n$ as $\delta \rightarrow 0$.

We denote by $\what{\tau}$ the first time when all the $(\rho_0^l)_{l\in [L]}$ in the solution to the ODE system \eqref{eq:ode} reach zero. In addition we denote by $\what{\tau}^\delta$ the first time when $\sum_{l \in [L]} \rho_0^l$ reaches a given positive $\delta$:
\begin{align} \label{eq:stop}
\what{\tau} :=  \inf \left\{\tau:  \( \rho_0^l(\tau) \)_{l \in [L]} = \mathbf{0} \right\},
&& \what{\tau}^{\delta} := \inf \left\{ \tau: \sum_{l \in [L]} \rho_0^l(\tau) = \delta \right\}.
\end{align}

Note that the solution \eqref{eq:ode_sol} is not explicit due to the integral term $\beta^l(\tau)$. Observe that $\beta^l(\tau) \in \[0,1\]$ and $\beta^l$ is monotonically increasing in $\tau \in [0, \what{\tau}]$ and that $\sum_{l \in [L]} \beta^l(\tau) = \tau$, which we will use later. 

Now, define the functions $\nu: \R^L \rightarrow [0,1]$ and $\lambda: \R^L \rightarrow \R$ by
\begin{equation}\label{eq:ode_sol_z}
\begin{aligned}
\nu_0^l(\mathbf{z})
    &= - z^l + \sum_{k'=0}^K \nu_{k'}^l(\mathbf{0})\( 1 - \sum_{k''=0}^{k'-1} p(k'',\lambda^l(\mathbf{z})) \), \\
\nu_k^l(\mathbf{z})
    &= \sum_{k'=k}^K \nu_{k'}^l(\mathbf{0})  p(k'-k,\lambda^l(\mathbf{z})) , \ \forall k \in [K-1], \\
\nu_K^l(\mathbf{z})
    &= \nu_K^l(\mathbf{0})  p(0, \lambda^l(\mathbf{z})),
\end{aligned}
\end{equation}
where
\begin{align}
\mathbf{z}:= \(z^l\)_{l \in [L]} \in \[0,1\]^L, &&
\lambda^l(\mathbf{z}) := \sum_{l' \in [L]} \kappa(l',l) z^{l'}.
\end{align}
We remark that $\nu_k^l(\boldsymbol{z})=\rho_k^l(\tau)$ for $\boldsymbol{z}=(\boldsymbol{\beta}^1(\tau), ..., \boldsymbol{\beta}^L(\tau))$ for $k\in [L]$ and $l\in [L]$.
The partial derivative of $\nu_0^l(\mathbf{z})$ is given by
\begin{align} \label{eq:deriv_z}
\frac{\d \nu_0^l(\mathbf{z})}{\d z^{l'}}
    &= -\delta_{l,l'} + \kappa(l',l) \nu_1^l(\mathbf{z})
\end{align}
where $\delta_{l,l'}$ is the Kronecker delta.

Define now
\begin{equation}\label{eq:stop_z}
\begin{aligned} 
\mathcal{Z}_0 := \left\{ \mathbf{z}: \boldsymbol{\nu_0}(\mathbf{z}) = \mathbf{0} \right\},
&&
\mathcal{Z}_\delta := \left\{ \mathbf{z}: \sum_{l\in[L]} \nu_0^l(\mathbf{z}) = \delta \right\}, \\
\what{\mathbf{z}} := \min \left\{\mathcal{Z}_0 \right\},
&&
\what{\mathbf{z}}^\delta := \min \left\{ \mathcal{Z}_\delta \right\}.
\end{aligned}
\end{equation}
The existence of the last two component-wise minimal $\what{\mathbf{z}}$ and $\what{\mathbf{z}}^\delta$ can be verified by rewriting $\boldsymbol{\nu_0}(\mathbf{z}) = \mathbf{0}$ and $\nu_0^l(\mathbf{z}) = \delta$ as a fixed point problem using the definitions in \eqref{eq:ode_sol_z}. Then by properties of the functions $\nu^l_0$ similar to those of \eqref{eq:operator} used in the proof of Lemma~\ref{prop:fp_exist}, we can apply the Knaster-Tarski fixed point theorem to show existence. Therefore, it holds that
\begin{align}
\sum_{l\in[L]} \what{z}^l = \min_{\mathbf{z} \in \mathcal{Z}_0 }\left\{ \sum_{l\in[L]} z^l\right\}, &&
\sum_{l\in[L]} \what{z}^{\delta,l} = \min_{\mathbf{z} \in \mathcal{Z}_\delta }\left\{ \sum_{l\in[L]} z^l\right\}.
\end{align}
\begin{proposition}[Least joint zeros] \label{prop:first_zero}
For $\what{\tau}$ and $\what{\tau}^\delta$ as defined in equation \eqref{eq:stop}, and $\what{\mathbf{z}}$ and $\what{\mathbf{z}}^{\delta}$ defined in equation \eqref{eq:stop_z} the following holds true:
\begin{align}
\boldsymbol{\beta}(\what{\tau}) = 
\what{\mathbf{z}},
&& \what{\tau} = \sum_{l\in[L]} \what{z}^l,
&& \boldsymbol{\beta}(\what{\tau}^\delta) = \what{\mathbf{z}}^\delta,
&& \what{\tau}^\delta = \sum_{l\in[L]}  \what{z}^{\delta,l}.
\end{align}
\end{proposition}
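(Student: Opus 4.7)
The plan is to exploit the direct correspondence between the ODE solution \eqref{eq:ode_sol} and the algebraic expressions \eqref{eq:ode_sol_z}. Because $\lambda^l(\tau) = \sum_{l'}\kappa(l',l)\beta^{l'}(\tau) = \lambda^l(\boldsymbol{\beta}(\tau))$ and the initial values $\nu_{k'}^l(0)=\nu_{k'}^l(\mathbf{0})$ agree by construction, comparing the two closed forms yields the identity $\nu_0^l(\tau) = \nu_0^l(\boldsymbol{\beta}(\tau))$ for every $l\in[L]$ and every $\tau\in[0,\what{\tau}]$. Summing the defining ODEs for the $\beta^l$ also delivers $\sum_l \beta^l(\tau) = \tau$.

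Three of the four assertions then follow immediately. By definition $\nu_0^l(\what{\tau}) = 0$ for every $l$, so $\boldsymbol{\beta}(\what{\tau}) \in \mathcal{Z}_0$; similarly $\boldsymbol{\beta}(\what{\tau}^\delta) \in \mathcal{Z}_\delta$. The componentwise minimality of $\what{\mathbf{z}}$ and $\what{\mathbf{z}}^\delta$ forces $\boldsymbol{\beta}(\what{\tau}) \geq \what{\mathbf{z}}$ and $\boldsymbol{\beta}(\what{\tau}^\delta) \geq \what{\mathbf{z}}^\delta$, and summing coordinates yields $\what{\tau}^\delta \geq \sum_l \what{z}^{\delta,l}$. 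The only delicate statement is upgrading the first of these to the equality $\boldsymbol{\beta}(\what{\tau}) = \what{\mathbf{z}}$, which then automatically produces $\what{\tau} = \sum_l \what{z}^l$.

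To close this gap, I would show that the region $C := \{\mathbf{z}\in[0,1]^L : \mathbf{z} \leq \what{\mathbf{z}}\}$ is positively invariant under the ODE flow. Writing $\nu_0^l(\mathbf{z}) = -z^l + g^l(\mathbf{z})$ with $g^l(\mathbf{z}) := \sum_{k'=0}^K \nu_{k'}^l(\mathbf{0})(1 - \sum_{k''=0}^{k'-1} p(k'', \lambda^l(\mathbf{z})))$, each $g^l$ is componentwise nondecreasing since the Poisson tail $1-\sum_{k''=0}^{k'-1} p(k'',\cdot)$ is increasing and $\lambda^l(\cdot)$ is linear with nonnegative coefficients. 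Hence at a boundary point $\mathbf{z}\in\partial C$ with $z^{l_*} = \what{z}^{l_*}$, monotonicity together with the fixed-point identity $g^{l_*}(\what{\mathbf{z}}) = \what{z}^{l_*}$ gives $\nu_0^{l_*}(\mathbf{z}) = -\what{z}^{l_*} + g^{l_*}(\mathbf{z}) \leq 0$, so the $l_*$-component of the velocity $F^{l_*}(\mathbf{z}) = \nu_0^{l_*}(\mathbf{z})/\nu_0(\mathbf{z})$ is nonpositive whenever $\nu_0(\mathbf{z}) > 0$; this is the Nagumo tangency condition on the face $\{z^{l_*}=\what{z}^{l_*}\}$. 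If the trajectory reaches a boundary point with $\nu_0(\mathbf{z}) = 0$, then using $\nu_0^l(\tau) \geq 0$ along trajectories (preserved by the ODE because $d\nu_0^l/d\tau \geq 0$ whenever $\nu_0^l = 0$), every $\nu_0^l(\mathbf{z})$ vanishes, so $\mathbf{z}\in\mathcal{Z}_0\cap C = \{\what{\mathbf{z}}\}$ by minimality and the flow has just terminated at $\what{\mathbf{z}}$. Starting from $\boldsymbol{\beta}(0) = \mathbf{0}\in C$, the trajectory therefore stays in $C$ throughout $[0,\what{\tau}]$, giving $\boldsymbol{\beta}(\what{\tau})\leq\what{\mathbf{z}}$ and thus the desired equality.

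The principal obstacle is making this invariance fully rigorous: the tangency $F^{l_*}\leq 0$ on the relevant face is nonstrict, so one must exclude a tangential re-entry into the exterior of $C$ driven by higher-order effects in the remaining coordinates (where $\kappa(l_*,l_*)>0$ is a potential issue). A clean fallback, should the direct Nagumo argument require further justification, is to compare $\boldsymbol{\beta}(\tau)$ with the Knaster--Tarski iteration $\mathbf{z}^{(n+1)} := \mathbf{g}(\mathbf{z}^{(n)})$, $\mathbf{z}^{(0)} := \mathbf{0}$, which satisfies $\mathbf{z}^{(n)} \uparrow \what{\mathbf{z}}$; monotonicity of $\mathbf{g}$ together with the non-negativity and monotonicity of each $\beta^l$ then allows an inductive componentwise bound $\boldsymbol{\beta}(\tau)\leq\what{\mathbf{z}}$.
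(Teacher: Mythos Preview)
Your argument for the two inequalities $\boldsymbol{\beta}(\what{\tau}^\delta)\ge\what{\mathbf{z}}^\delta$ and $\what{\tau}^\delta\ge\sum_l\what{z}^{\delta,l}$, and for the preliminary inequality $\boldsymbol{\beta}(\what{\tau})\ge\what{\mathbf{z}}$, is identical to the paper's. For the upgrade to equality you also head in the same direction as the paper---the region $C=\{\mathbf{z}\le\what{\mathbf{z}}\}$ is not exited---but you frame this as a Nagumo forward-invariance statement and then worry about the nonstrict tangency $\nu_0^{l_*}(\mathbf{z})\le 0$ on a face. The paper sidesteps this entirely by arguing by contradiction at the \emph{first} exit time $\bar{\tau}$: if some coordinate $l$ has $\beta^l(\bar{\tau})=\what{z}^l$ while another coordinate satisfies $\beta^{l'}(\bar{\tau})<\what{z}^{l'}$, then the \emph{strict} off-diagonal monotonicity $\partial\nu_0^l/\partial z^{l'}=\kappa(l',l)\nu_1^l(\mathbf{z})>0$ forces $\nu_0^l(\boldsymbol{\beta}(\bar{\tau}))<\nu_0^l(\what{\mathbf{z}})=0$, which directly contradicts $\nu_0^l(\tau)\ge 0$ along the trajectory; if instead all coordinates coincide with $\what{\mathbf{z}}$ at $\bar{\tau}$, then $\bar{\tau}<\what{\tau}$ contradicts minimality of $\what{\tau}$. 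In other words, by using the strict inequality in the off-diagonal derivative you get an immediate contradiction and never need to analyze tangential re-entry or invoke a Knaster--Tarski comparison. Your approach is correct in outline, just more elaborate than necessary; the missing sharpening is exactly the strict monotonicity you already have available from $g^{l_*}$.
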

\begin{proof}

We show the first two equalities; the others can be shown with the same arguments. First, we have $\boldsymbol{\beta}(\what{\tau}) \ge \what{\mathbf{z}}$ as $\boldsymbol{\beta}(\what{\tau}) \in \mathcal{Z}_0$ and by the definition of $\what{\mathbf{z}}$. To show the equality, suppose $\boldsymbol{\beta}(\what{\tau}) > \what{\mathbf{z}}$. Since the curve $\boldsymbol{\beta}$ is continuous and non-decreasing in $[0, \what{\tau}]$ and $\boldsymbol{\beta}(0)=\boldsymbol{0}$,
 there exists $\bar{\tau} < \what{\tau}$ at which point for the first time $\beta^l(\bar{\tau}) = \what{z}^l$ for some  $l \in [L]$. It holds that $\boldsymbol{\beta}(\bar{\tau}) \le \mathbf{\what{z}}$. If actually $\beta^{l'}(\bar{\tau}) = \what{z}^{l'}$ for all $l' \in [L]$, then $\boldsymbol{\nu_0}(\boldsymbol{\beta}(\bar{\tau})) =  \boldsymbol{\nu_0}(\mathbf{\what{z}}) = \mathbf{0}$, which contradicts with the definition of $\what{\tau}$. Hence, there should exist at least one $l' \in [L], \ l' \neq l$ such that $\beta^{l'}(\bar{\tau}) < \what{z}^{l'}$. However, with $\tfrac{\d \nu_0^l(\mathbf{z})}{\d z^{l'}} > 0$ for all $l'\neq l$, it holds that $\nu_0^l(\boldsymbol{\beta}(\bar{\tau})) < \nu_0^l (\mathbf{\what{z}}) = 0$, which contradicts with $\rho_0^l(\tau) \ge 0$ for $0 < \tau < \what{\tau}$. Therefore, it follows $\boldsymbol{\beta}(\what{\tau}) = \what{\mathbf{z}}$ and $\what{\tau} = \sum_{l\in[L]} \beta^l(\what{\tau}) =\sum_{l\in[L]} \what{z}^l$.
\end{proof}

{\em Final phase of the process:}
The approximation of the process with the functions \eqref{eq:ode_sol_z} works only in the domain $\mathcal{D}_{\delta}$ before the process reaches $\what{\tau}^{\delta}$. At this time, a proportion $\delta+o(1)$ of infected vertices remains, which still need to be explored. We need to know whether the percolation arising from these remaining $\delta n+o(n)$ unexplored vertices is negligible as $\delta \rightarrow 0$, or whether they still trigger a large number of additional infections. Thus, we study how the process triggered by the remaining small proportion $\delta$ of infected vertices evolves. We explore the final phase of the process starting from the iteration $\lfloor \what{\tau}^{\delta} n \rfloor$ by generations instead of the sequential procedure used so far. Then, we show that, under an appropriate condition, the proportion of infected vertices in the final phase will converge to zero as $\delta \rightarrow 0$. 

For this we group the remaining uninfected vertices at iteration $\lfloor \what{\tau}^{\delta} n \rfloor$ into \textit{weak} vertices and \textit{strong} vertices by their threshold as follows:
Let
\begin{align}
    W^l :=  U_1^l(\lfloor \what{\tau}^{\delta} n\rfloor),
    && W :=  \bigcup_{l\in[L]} W^l
\end{align}
denote sets of weak type $l$ vertices, which have threshold equal to $1$, and their union set of all  weak vertices; and
\begin{align}
    S^l := \bigcup_{k = 2}^K U_k^l(\lfloor \what{\tau}^{\delta} n\rfloor),
    && S := \bigcup_{l\in[L]} S^l
\end{align}
denote sets of strong type $l$ vertices that have threshold greater or equal to $2$, and their union set of all strong vertices.
To track the iterative exploration, we use subscript $j \in \mathbb{N}$ for the $j$-th iteration of our exploration process. Let $U_0^l := U_0^l(\lfloor \what{\tau}^{\delta} n\rfloor)$ denote the initial set of infected vertices of type $l \in [L]$ in the final phase of the process; $W_j^l$ and $S_j^l$ denote the type $l$ weak vertices and strong vertices in the $j$-th iteration.

The final phase is explored as follows: in its first round, we examine all the weak vertices and strong vertices that are infected by vertices in $U_0$, hence we obtain corresponding sets $W_1^l$, $S_1^l$, $W_1 \cup S_1$; in the second round, we examine the newly infected vertices  $W_2^l$, $S_2^l$, $W_2 \cup S_2$, which have been infected through edges from $W_1^l$ and $S_1^l$; and so on. Iterating via such exploration, we obtain the set of all the infected vertices in the final phase $\bigcup_{m=1}^{\infty}W_m \cup S_m$. 

\begin{proposition} [Convergence of final phase of the process] \label{prop:remain}
Let there exist a vector $(w^l)_{l \in [L]} > \mathbf{0}$ and $\sum_{l\in[L]} w^l \le 1$, such that
\begin{align} \label{eq:deriv_zhat_supp}
\sum_{l' \in [L]} w^{l'} \frac{\d \nu_0^l(\what{\mathbf{z}})}{\d z^{l'}}  < 0, \ \forall l \in [L].
\end{align}
Then
\begin{align}
 n^{-1}\abs{\bigcup_{m=1}^\infty W_m \cup S_m}\xrightarrow[\delta \rightarrow 0]{p} 0.
\label{eq:remain}
\end{align}
\end{proposition}
\begin{proof}
For the fixed $\mathbf{w}=(w^l)_{l \in [L]}$ such that the condition \eqref{eq:deriv_zhat_supp} holds,
it follows by the continuity of $\d \nu_0^l$ that there exists $\delta' >0$ and $\epsilon > 0$ such that for all $0< \delta$ small enough it holds that
\begin{align}  \label{eq:deriv_zdelta}
\sum_{l' \in [L]} w^{l'} \frac{\d \nu_0^l(\what{\mathbf{z}}^{\delta})}{\d z^{l'}}  < -\epsilon, \ \forall l \in [L].
\end{align}
where $\what{\mathbf{z}}^{\delta}$ is as defined in \eqref{eq:stop_z}.

Consequently, it follows by \eqref{eq:deriv_z} that
\begin{equation} \label{eq:deriv_ineq}
\begin{aligned}
&\sum_{l' \in [L]} w^{l'} \kappa(l',l) \nu_1^l(\what{\mathbf{z}}^{\delta}) < c_1^l w^l < w^l - \epsilon, \\
&\sum_{l\in[L]}\sum_{l' \in [L]} w^{l'} \kappa(l',l) \nu_1^l(\what{\mathbf{z}}^{\delta}) < \sum_{l\in[L]} c_1^l w^l <  c_1 \sum_{l\in[L]} w^l \le c_1,
\end{aligned}    
\end{equation}
where $0 < c_1^l < 1$ for $l\in [L]$ and $c_1 = \max_{l\in [L]} c_1^l <1$ are constants depending on $\epsilon$ but not on $\delta$.
 With $\delta'=2\frac{\delta}{\bar{w}}, \bar{w}=\min \{ w^1,\dots. w^L\}$, it holds that  
\begin{align} \label{eq:delta_ineq}
    \nu_0^l(\what{\mathbf{z}}^{\delta}) < w^l \delta', \quad \forall l \in [L].
\end{align}
Recall that $\nu_k^l(\mathbf{z})=\rho_k^l(\tau)$ for $\mathbf{z}=(\boldsymbol{\beta}^1(\tau), ..., \boldsymbol{\beta}^L(\tau))$ for $k\in [L]$ and $l\in [L]$. By the approximation 
\eqref{eq:approx}, and because $\boldsymbol{\beta}(\what{\tau}^\delta) = \what{\mathbf{z}}^\delta$ by Proposition~\ref{prop:first_zero},  there exists a $\sigma (h(\lfloor \what{\tau}^{\delta} n\rfloor)$-measurable event $\mathcal{A}_n^\delta$ such that $\lim_{n\rightarrow \infty} \mathbb{P}(\mathcal{A}_n^\delta)=1$ and on which 
\begin{align}
&\sum_{l' \in [L]} w^{l'} \kappa(l',l) u_1^l(\lfloor \what{\tau}^{\delta} n\rfloor)/n < c_1^l w^l < w^l - \epsilon,\label{eq:deriv_true:sets:1} \\
&\sum_{l\in[L]}\sum_{l' \in [L]} w^{l'} \kappa(l',l) u_1^l(\lfloor \what{\tau}^{\delta} n\rfloor)/n < \sum_{l\in[L]} c_1^l w^l <  c_1 \sum_{l\in[L]} w^l \le c_1,\label{eq:deriv_true:sets:2}
\end{align}    
and 
\begin{equation}\label{approx:true:proc:1}
    u_0^l(\lfloor \what{\tau}^{\delta} n\rfloor)/n < w^l \delta', \quad \forall l \in [L].
    \end{equation}
holds.
We use induction to show that
\begin{align} 
I_{\mathcal{A}_n^\delta} \Eb\[ |W^l_m|\;\middle|\; \sigma (h(\lfloor \what{\tau}^{\delta} n\rfloor)) \]
    \le c_1 c^{m-1} w^l \delta' n,&&\label{eq:induction:1}\\  
I_{\mathcal{A}_n^\delta}\Eb\[|S^l_m|\;\middle|\; \sigma (h(\lfloor \what{\tau}^{\delta} n\rfloor)) \]
    \le c_2 c^{m-1} w^l \delta' n,&&\label{eq:induction:2}
\end{align}
where $0 < c_1 < c < 1$, $c_2 = c - c_1$. This will then imply that
\begin{equation} 
\sum_{l\in[L]}\sum_{m=1}^\infty I_{\mathcal{A}_n^\delta} \Eb\[ |W^l_m|+|S^l_m|\;\middle|\; \sigma (h(\lfloor \what{\tau}^{\delta} n\rfloor)) \]
    \le \frac{c \delta'}{1-c}  n\leq  \frac{c \delta}{(1-c)\bar{w}}  n,
\end{equation}
and we can then conclude with a simple Markov bound. 
First, for $m=1$, let $x \in U_0^{l'}$ be an infected vertex and $y \in W^l$ be a weak vertex, then the probability that $y$ gets infected by $x$ is $\kappa(l',l)/n$. The overall probability that $y \in W^l$ is infected by a vertex $x \in U_0$ is then
\begin{align}
p_w^l
    &:= \Pb \( \cup_{l' \in [L]} \cup_{x \in U_0^{l'}} \{E_{xy} = 1 \} \;\middle|\; y \in W^l , \sigma (h(\lfloor \what{\tau}^{\delta} n\rfloor)) \)
    \le \sum_{l'\in[L]}\frac{\kappa(l',l)}{n} |U_0^{l'}|.
\end{align}
Conditional on $\sigma (h(\lfloor \what{\tau}^{\delta} n\rfloor))$, on the set $\mathcal{A}_n^\delta$, the expected number of  weak vertices of type $l$ infected in the first round is bounded by:
\begin{align}
I_{\mathcal{A}_n^\delta} \Eb\[ |W_1^l|\;\middle|\; \sigma (h(\lfloor \what{\tau}^{\delta} n\rfloor) \]
    &\le  |W^l| p_w^l  \le |W^l|  \sum_{l' \in [L]} \frac{\kappa(l',l)}{n} |U_0^{l'}| \\
    &\le u_1^l(\lfloor \what{\tau}^{\delta} n\rfloor) \sum_{l' \in [L]} \kappa(l',l)  u_0^{l'}(\lfloor \what{\tau}^{\delta} n\rfloor)/n \le c_1 \delta' n  w^l,
\end{align}
where we use inequalities \eqref{eq:deriv_true:sets:1} and \eqref{approx:true:proc:1} in the last two lines. 

Conditional on $\sigma (h(\lfloor \what{\tau}^{\delta} n\rfloor)$, the probability that a strong vertex $y \in S^l$ is connected to two infected vertices in the first round is
\begin{align}
p_s^l
    &:= \Pb \( \cup_{l', l'' \in [L]} \cup_{x_1 \in U_0^{l'}, x_2 \in U_0^{l''}} \{E_{x_1y} = 1, E_{x_2y}=1 \} \;\middle|\; \sigma (h(\lfloor \what{\tau}^{\delta} n\rfloor) , y \in S^l \) \\
    &\le \sum_{l', l'' \in[L]} \frac{\kappa(l',l)\kappa(l'',l)}{n^2} |U_0^{l'}||U_0^{l''}|.
\end{align}
The expected number of type $l$ strong vertices that are infected in the first round is bounded on $\mathcal{A}_n^\delta$ by
\begin{align}
I_{\mathcal{A}_n^\delta}\Eb \[ |S_1^l|\;\middle|\; \sigma (h(\lfloor \what{\tau}^{\delta} n\rfloor) \]
    &\le  |S^l| p_s^l  \\
    &\le \sum_{l', l'' \in[L]}\frac{\kappa(l',l)\kappa(l'',l)}{n^2} |U_0^{l'}||U_0^{l''}| |S^l| \\
    &\le  \frac{\kappa_{max}^2}{n^2} |U_0|^2 |S^l| \le \kappa_{max}^2 (\sum_{l\in [L]}u_0^l(\lfloor \what{\tau}^{\delta} n\rfloor))^2 n \\
    &\le \kappa_{max}^2 (w^l\delta')^2 n \le  c_2 (w^l \delta') n 
\end{align}
for $\delta'$ small enough, and with $\kappa_{max} = \max_{l'}\{\sum_{l \in [L]} \kappa(l', l)\} < \infty$.

Next, by induction, assume inequality \eqref{eq:induction:1} holds for $m=M$, $M \ge 1$, then for round $m=M+1$, the expected number of remaining weak vertices from $W \backslash \cup_{m=1}^M W_m$ that are infected by infected vertices in $W_M \cup S_M$ from round $M$ is bounded by
\begin{align}
&I_{\mathcal{A}_n^\delta} \Eb \[|W_{M+1}^l|\;\middle|\; \sigma (h(\lfloor \what{\tau}^{\delta} n\rfloor) \]\\
    &\le I_{\mathcal{A}_n^\delta} \Eb\[|W^l \backslash \cup_{m=1}^M W^l_m| \sum_{l' \in [L]} \frac{\kappa(l',l)}{n}  |W^{l'}_M \cup S^{l'}_M|\;\middle|\; \sigma (h(\lfloor \what{\tau}^{\delta} n\rfloor)\] \\
    &\le |W^l| \sum_{l' \in [L]} \frac{\kappa(l',l)}{n}  I_{\mathcal{A}_n^\delta}  \Eb\[|W^{l'}_M \cup S^{l'}_M|\;\middle|\; \sigma (h(\lfloor \what{\tau}^{\delta} n\rfloor)\] \\
    &\le u_1^l(\lfloor \what{\tau}^{\delta} n\rfloor) \sum_{l' \in [L]} \kappa(l',l)  c^M \delta' w^{l'} \le  n c_1 c^M \delta' w^l,
\end{align}
where we used \eqref{approx:true:proc:1} in the last line. This shows inequality \eqref{eq:induction:1}.

For a strong vertex to become infected in round $M+1$, it needs at least one connection to $W_M \cup S_M$ and one to $\cup_{m=1}^M  (W_m \cup S_m)$. The expected number of strong vertices in $S \backslash \cup_{m=1}^M S_M$ that are infected is bounded on $I_{\mathcal{A}_n^\delta}$ by
\begin{align}
&I_{\mathcal{A}_n^\delta}\Eb\[|S^l_{M+1}|\;\middle|\; \sigma (h(\lfloor \what{\tau}^{\delta} n\rfloor) \]\\
    &\le  I_{\mathcal{A}_n^\delta}\Eb\[\sum_{l', l'' \in [L]} \frac{\kappa(l',l)\kappa(l'',l)}{n^2}  |W^{l'}_M \cup S^{l'}_M| |\cup_{m=1}^M  (W^{l''}_m \cup S^{l''}_m) |\;\middle|\; \sigma (h(\lfloor \what{\tau}^{\delta} n\rfloor)\] \\
    &\le  \kappa_{max}^2 \frac{c}{1-c} c^{M} \delta'^2 n \le c_2 c^M \delta' n w^l,
\end{align}
for $\delta'$ small enough. So the inequality \eqref{eq:induction:2} holds and with $\sum_{l\in[L]} w^l \le 1$ we obtain
\begin{align}
I_{\mathcal{A}_n^\delta}\Eb\[|W_m \cup S_m|\;\middle|\; \sigma (h(\lfloor \what{\tau}^{\delta} n\rfloor) \]
    \le \sum_{l \in [L]} I_{\mathcal{A}_n^\delta} \Eb\[|W^l_m \cup S^l_m|\;\middle|\; \sigma (h(\lfloor \what{\tau}^{\delta} n\rfloor) \]
    \le c^m \delta' n.
\end{align}
Consequently, for any $a > 0$ the Markov inequality yields
\begin{align}
&\Pb \(I_{\mathcal{A}_n^\delta}\frac{|\bigcup_{m=1}^\infty W_m \cup S_m|}{n} > a \;\middle|\; \sigma (h(\lfloor \what{\tau}^{\delta} n\rfloor) \)\\
    &\le a^{-1} \Eb\[I_{\mathcal{A}_n^\delta}\frac{|\bigcup_{m=1}^\infty W_m \cup S_m|}{n}\;\middle|\; \sigma (h(\lfloor \what{\tau}^{\delta} n\rfloor)\]   \\
    &\le a^{-1} \sum_{l\in[L]}\sum_{m=1}^\infty I_{\mathcal{A}_n^\delta} \Eb\[ |W^l_m|+|S^l_m|\;\middle|\; \sigma (h(\lfloor \what{\tau}^{\delta} n\rfloor)) \] \le  \frac{c\delta}{(1-c)\bar{w}a}   n,
\end{align}
which together with $\lim_{n\rightarrow \infty} \mathbb{P}(\mathcal{A}_n^\delta)=1$ leads to the result \eqref{eq:remain}.
\end{proof}

The final fraction of infected vertices for graph with finite vertex type is determined as Proposition~\ref{prop:fp_z}.
\begin{proof}[Proof of Proposition~\ref{prop:fp_z}]
The ODE approximation \eqref{eq:ode} holds in the domain $\mathcal{D}_{\delta}$ until $\what{\tau}^{\delta}$ is reached. At the time $\lfloor n\what{\tau}^{\delta} \rfloor$,  the number of infected vertices which have been explored is exactly $\lfloor n\what{\tau}^{\delta} \rfloor$. 
It holds that 
$$|\mathbb{D}(G(n,\cV_{L,K}^d))|=\lfloor n\what{\tau}^{\delta} \rfloor + \abs{\bigcup_{m=1}^\infty W_m \cup S_m}.$$
Note that $\nu_0(\tau)$ is decreasing in a neighborhood of  $ \what{\tau}$, which implies that $\what{\tau}^{\delta} \rightarrow \what{\tau}$. By Proposition~\ref{prop:remain} together with Proposition~\ref{prop:first_zero} the result follows.
\end{proof}

\section{Proofs of auxiliary results} \label{sec:aux}

\begin{proof}[Proof of Lemma \ref{prop:psi_mono}]
The statement becomes obvious once we rewrite \eqref{eq:operator} as 
$$\Psi_{\kappa} [\boldsymbol{\cdot}](\cdot) = \sum_{k=0}^\infty \eta_k(\cdot)\Pb\left\{\text{Poi}(\Lambda_{\kappa} [\boldsymbol{\cdot}](\cdot)) \ge k)\right\},$$ where $\text{Poi}(\lambda)$ is a Poisson random variable with intensity $\lambda$. For $f \le g$ we have $\Lambda_{\kappa} [f](\cdot) \le \Lambda_{\kappa} [g](\cdot)$, hence $\Pb\left\{\text{Poi}(\Lambda_{\kappa} [f](\cdot)) \ge k)\right\} \le \Pb\left\{\text{Poi}(\Lambda_{\kappa} [g](\cdot)) \ge k)\right\}$. The result follows.
\end{proof}

\begin{proof}[Proof of Lemma \ref{prop:psi_lipschitz}]
We prove the second case where $\kappa$ is Lipschitz. If $\kappa$ is only continuous, then by compactness of $\mathcal{S}\times \mathcal{S}$, it follows from the Heine–Cantor theorem that $\kappa$ is uniformly continuous on $\mathcal{S}\times \mathcal{S}$. It is then easy to see that one can adapt the following proof by replacing the Lipschitz constant $L_{\kappa}$ by an $\epsilon$-$\delta$ argument. Note that $\Lambda_{\kappa} [f] \le M_{\kappa}$ as $f$ is bounded, and that $ \mathbf{0} \le P_{\kappa}^k [f] \le \mathbf{1}$, $ \mathbf{0} \le \Psi_{\kappa} [f] \le \mathbf{1}$ as they are in the form of a Poisson probability. For the Lipschitz continuity, we examine each of the operators as follows:
\begin{align}
|\Lambda_{\kappa}f(x) - \Lambda_{\kappa}f(y)|
    &= \left|\int_{s \in \cS} \kappa(s, x) f(s) \dd \mu(s) - \int_{s \in \cS} \kappa(s, y) f(s) \dd \mu(s)\right| \\
    &\le \int_{s \in \cS} |\kappa(s, x) - \kappa(s, y)| f(s) \dd \mu(s) \\
    &\le L_{\kappa} |x-y|. \\
\end{align}
Next, for the $P_{\kappa}^k$ operators and $k \in \mathbb{N}_0$, it follows that
\begin{align}
|P_{\kappa}^k f(x) - P_{\kappa}^k f(y)|
    &= \left|\frac{\(\Lambda_{\kappa} f(x)\)^k}{k !} e^{-\Lambda_{\kappa}f(x)} - \frac{\(\Lambda_{\kappa} f(y)\)^k}{k !} e^{-\Lambda_{\kappa}f(y)}\right| \\
    &\le \frac{e^{-\Lambda_{\kappa}f(x)}}{k!} \left|\(\Lambda_{\kappa} f(x)\)^k - \(\Lambda_{\kappa} f(y)\)^k\right| + \frac{\(\Lambda_{\kappa} f(y)\)^k}{k !} \left|e^{-\Lambda_{\kappa}f(x)} - e^{-\Lambda_{\kappa}f(y)}\right| \\
    &\le \frac{e^{-\Lambda_{\kappa}f(x)} \sum_{i=0}^{k-1}\(\Lambda_{\kappa}f(x)\)^{i}\(\Lambda_{\kappa}f(y)\)^{k-1-i}}{k!} \left|\Lambda_{\kappa} f(x)- \Lambda_{\kappa} f(y)\right| \\
    &\quad +  \frac{\(\Lambda_{\kappa} f(y)\)^k}{k !} \left|\Lambda_{\kappa}f(x) -\Lambda_{\kappa}f(y)\right|,
\end{align}
where in the last line the first term applies the equality $1 - a^k = (1-a)(1+a+\dots+a^{k-1})$ with $a=\Lambda_{\kappa}f(x)/\Lambda_{\kappa}f(y)$, and the second term uses that $|e^{-x} - e^{-y}| \le |x - y|$ for $x, y \ge 0$. Then, recalling the Lipschitz continuity of $\Lambda_{\kappa}$ and the upper bound of $\kappa$, we have the following inequality for $k \ge 1$:
\begin{align}
|P_{\kappa}^k f(x) - P_{\kappa}^k f(y)|
    &\le \frac{M_{\kappa}^{k-1}}{(k-1)!}\left|\Lambda_{\kappa}f(x) - \Lambda_{\kappa}f(y)\right| + \frac{M_{\kappa}^k}{k !} \left|\Lambda_{\kappa}f(x) - \Lambda_{\kappa}f(y)\right| \\
    &= \frac{(1+M_{\kappa}/k)M_{\kappa}^{k-1}}{(k-1)!}\left|\Lambda_{\kappa}f(x) - \Lambda_{\kappa}f(y)\right| \\
    &\le \frac{L_{\kappa} (1+M_{\kappa}) M_{\kappa}^{k-1}}{(k-1)!} |x-y|.
\end{align}

We hitherto conclude the Lipschitz continuity of the operator $\Psi_{\kappa}$ by
\begin{align}
\left|\Psi_{\kappa} [f](x) - \Psi_{\kappa} [f](y) \right|
    &= \left|\sum_{k=0}^\infty \eta_k(x)\(1- \sum_{k'=0}^{k-1}P_{\kappa}^{k'} f(x)\)\right.   \left.- \sum_{k=0}^\infty \eta_k(y)\(1- \sum_{k'=0}^{k-1}P_{\kappa}^{k'} f(y)\) \right| \\
    &\le  \sum_{k=1}^\infty \sum_{k'=0}^{k-1} \left| P_{\kappa}^{k'}f(x) - P_{\kappa}^{k'} f(y)\right| \\
    &\le (1+M_{\kappa}) \sum_{k=1}^\infty \sum_{k'=0}^{k-1} \frac{L_{\kappa} M_{\kappa}^{k'-1}}{(k'-1)!} |x-y| \\
    &= L_{\kappa} (1+M_{\kappa}) e^{M_{\kappa}} |x-y| \sum_{k=1}^\infty \sum_{k'=k}^{\infty} \frac{M_{\kappa}^{k'}}{k'!} e^{-M_{\kappa}}\\
    &= L_{\kappa} (1+M_{\kappa}) e^{M_{\kappa}}|x-y| \sum_{k=1}^\infty \Pb\(\text{Poi}(M_{\kappa}) \ge k\) \\
    &= L_{\kappa} (1+M_{\kappa}) e^{M_{\kappa}}|x-y| \Eb[\text{Poi}(M_{\kappa})] \\
    &= L_{\kappa} (1+M_{\kappa})M_{\kappa} e^{M_{\kappa}}|x-y|,
\end{align}
where $\text{Poi}(M_{\kappa})$ is some Poisson random variable with parameter $M_{\kappa}$.
\end{proof}

\begin{proof}[Proof of Lemma~\ref{prop:fp_exist}]
 Because $\kappa$ is Lipschitz continuous on $\cS\times \cS$, it follows by Lemma~\ref{prop:psi_lipschitz} and the fixed point property that all fixed points, if any, are in $\mathcal{H}$.

It remains to show that $\mathcal{H}$ is not empty and the existence of a least fixed point. Notice that the set $\cF_1^{Lip}$ is a partially ordered set with the partial order $\le$ as it satisfies: reflexibility, $f \le f$ for $f \in \cF_1^{Lip}$; transitivity, $f_1 \le f_2$ and $f_2 \le f_3$ implies $f_1 \le f_3$ for $f_1, f_2, f_3 \in \cF_1^{Lip}$; and anti-symmetricity, $f_1 \le f_2$ and $f_2 \le f_1$ implies that $f_1 = f_2$ for $f_1, f_2 \in \cF_1^{Lip}$. Let $L_{\Psi, \kappa}$ be the Lipschitz constant of functions in $\cF_1^{Lip}$ as derived in Lemma \ref{prop:psi_lipschitz}. It holds for all functions $f\in \mathcal{H}$ that 
\begin{equation}\label{lip:H}
f(x) \le f(y) + L_{\Psi,\kappa} \abs{x - y}.
\end{equation}
We now define the pointwise supremum $\bar{f}$ by $\bar{f}(x)=\sup_{f\in \cF_1^{Lip}} f(x)$ for all $x\in \cS$. It then holds for all $x\in \cS$ that 
\begin{equation}\label{sup:right}
f(x) \leq \bar{f}(y)+L_{\Psi,\kappa} \abs{x-y}
\end{equation}
by taking the supremum over the right-hand side of \eqref{lip:H}. Then, by taking also the supremum over the left-hand side of \eqref{sup:right}, it follows that 
\begin{equation}\label{lip:ine:sided}
\bar{f}(x)\leq \bar{f}(y)+ L_{\Psi,\kappa} \abs{x-y}.
\end{equation}
Reversing the role of $x$ and $y$ in the arguments leads to 
$\bar{f}(y)\leq \bar{f}(x)+ L_{\Psi,\kappa} \abs{x-y}$, which together with \eqref{lip:ine:sided} implies Lipschitz continuity of the function $\bar{f}$. 

A similar argument shows that the infimum $\underline{f}$ defined by $\underline{f}(x) = \inf_{f\in \cF_1^{Lip}} f(x)$ is Lipschitz continuous as well. It follows that $\bar{f}$ and $\underline{f}$ are in $\mathcal{H}$. We have shown that $(\cF_1^{Lip}, \le)$ forms a \textit{complete lattice}. Since the operator $\Psi_{\kappa}: \cF_1^{Lip} \rightarrow \cF_1^{Lip}$ is order-preserving by Lemma \ref{prop:psi_mono}, it follows from the Knaster-Tarski theorem that $\Psi_{\kappa}$ has at least one fixed point and a least fixed point (with respect to the partial ordering) on $\cF_1^{Lip}$, which we denote by $\hat{f}$. The lower bound holds because $\mathbf{0} \le \eta_0 = \Psi_{\kappa} \mathbf{0}  \le \Psi_{\kappa} \hat{f} = \hat{f}$.
\end{proof}

\begin{proof}[Proof of Lemma \ref{prop:gd}]
We show that 
\begin{equation}\label{toshow}
D P_{\kappa}^k f [\boldsymbol{\cdot}] = - \Lambda_{\kappa} [\boldsymbol{\cdot}] \(P_{\kappa}^{k}f - P_{\kappa}^{k-1}f \),
\end{equation}
from which \eqref{eq:derivative} follows immediately by the definition of $\Psi_{\kappa}$. 

We need to find for every $f\in \cF_b$ an operator $A \in L^+ (\cF_b)$ such that 
$$
\lim_{\norm{h}_\infty\rightarrow 0} \frac{\norm{P_{\kappa}^k [f+h] - P_{\kappa}^k f -Ah}_\infty }{\norm{h}_\infty}=0.
$$
Without loss of generality, assume $f \in [0,1]$. First note that by linearity of the integral, the operator $\Lambda_{\kappa}$ is linear. Moreover, since $\norm{ \kappa }$ is bounded, it follows that 
$$
\norm{\int_{s \in \cS} \kappa(s, \cdot) f(s) \dd \mu(s)}_\infty \leq \norm{ \kappa }_\infty \norm{f}_\infty
$$
and the operator is bounded. It follows that $D \Lambda_{\kappa} f =\Lambda_{\kappa} $. Next define the function $M_k: \cF_b \rightarrow \cF_b$ by 
$M_k f =\frac{f^k}{k !} e^{-f}$. In the definition, all operations are to be understood pointwise, i.e. $M_k f (x) =m(f(x))$ with the function $m: \mathbb{R} \rightarrow \mathbb{R}$ defined by $m(y)= \frac{y^k}{k !} e^{-y}$. Formally differentiating the function $M_k$ leads to the function $N_k: \cF_b \rightarrow \cF_b$
defined by $N_k f = \frac{f^{k-1}}{(k-1)!} e^{-f} - \frac{f^k}{k !} e^{-f} $. This is again meant in a pointwise manner, meaning that $(N_k f)(x)=n(f(x))$ with $n: \mathbb{R} \rightarrow \mathbb{R}$ defined by $n(y)= \frac{y^{k-1}}{(k-1) !} e^{-y} - \frac{y^k}{k !} e^{-y}$. The multiplication operator defined by $h \mapsto (N_k f)\cdot h $ is an element of $L(\cF_b)$ and we will show that it is in fact the Fr\'echet derivative of $M_k$ at the point $f$. For this we calculate 
\begin{align}
\norm{M_k [f+h] - M_k f - (N_k f)\cdot h }
    &=\sup_{x\in [0,1]} \abs{  M_k [f+h](x) - M_kf(x) - N_k f(x) \cdot h(x) } \\
    &=\sup_{x\in [0,1]} \abs{  m(f(x)+h(x)) - m(f(x)) - n( f(x)) h(x) } \\
    &=\sup_{x\in [0,1]} \abs{  n'( \zeta_x) (h(x))^2/2 } ,
\end{align}
where the $0 \leq \zeta_x \leq h(x)$ and $n'( \zeta_x) (h(x))^2/2$ is the remainder term of the Taylor approximation. Since $h(x) \leq 1$ and $n'$ bounded on $[0,1]$, it follows that the term in the last line is bounded by $C \norm{h}^2$ for some $C$. Dividing by $\norm{h}$ shows that the operator defined by $h \mapsto (N_k f)\cdot h $ is in fact the Fr\'echet derivative of $M_k$. 

Since $P_{\kappa}^k =M_k \circ \Lambda_{\kappa}  $, it follows by the chain rule for the Fr\'echet derivative that
$$ 
D P_{\kappa}^k f =(D M_k \circ \Lambda_{\kappa} ) [f] \circ D \Lambda_{\kappa} [f],
$$
from which \eqref{toshow} follows immediately noting that $D \Lambda_{\kappa}f$ is constant in $f$.
\end{proof}

\begin{proof}[Proof of Lemma~\ref{prop:partition}]
We construct a sequence of partitions indexed by $m$ with the properties 1. and 2. and such that $\mathrm{diam}(\cS_{L(m)}^l)\leq 1/m$ for $l \in [L(m)]$. We start with $m=1$. For every point $s\in \cS$, let $B_1(s)$ be the open ball around $s$ with diameter $1$. Because $\cS$ is compact, there exists a finite number of sets $B_1(s^1_1),\dots ,B_1(s^{L(1)}_1)$ such that $\cS = \cup_i^{L(1)} B_1(s^{i}_1)$, and we set $\cS^1_{L(1)}=B_1(s^1_1)$ and $\cS^l_{L(1)}=B_1(s^l_1)\setminus \cup_{j<l} B_1(s^j_1)$ for $l=2, \dots, L(1)$. Clearly, then the sets $\{ \cS^l_1 \}_{l\in [L(1)]}$ form a partition of $\cS$ and their diameter is bounded by $1$.

Let us assume that we have already partitions $\{\cS_{L(k)}^l\}_{l \in [L(k)]}$ constructed for $k=1,\dots, m$. To generate the partition $\{\cS_{L(m+1)}^l\}_{l \in [L(m+1)]}$, we start again with open balls $B_{1/(m+1)}(s)$ with diameter $1/(m+1)$ and use compactness of $\cS$ to choose finitely many points $s^1_{m+1},\dots, s^{L'(m+1)}_{m+1}$ such that 
$$\cS=\bigcup_{i=1}^{L'(m+1)} B_{1/(m+1)}(s^i_{m+1}).$$ 
Now define again disjoint sets 
$$\widetilde{B}_{1/(m+1)}(s^i_{m+1})=B_{1/(m+1)}(s^i_{m+1})\setminus \bigcup_{j<i}B_{1/(m+1)}(s^j_{m+1})$$
for $i=1,\dots, L'(m+1)$. These sets form again a partition of $\cS$ and it holds that $\mathrm{diam}(\widetilde{B}_{1/(m+1)}(s^{m+1}_{i}))< 1/(m+1)$ for all $i=1,\dots, L'(m+1)$. 

Now let $\{\cS_{L(m+1)}^l\}_{l \in [L(m+1)]}$ consist of all the intersections of the sets $\widetilde{B}_{1/(m+1)}(s^i_{m+1})$ for $i=1,\dots, L'(m+1)$ with the sets $\cS_{L(m)}^k$ for $k \in [L(m)]$ where we define $L(m+1)$ to be the number of all resulting sets. In the entire process, we remove any empty set that appears. It is straightforward to see that all constructed sets are measurable as we start the construction with the open balls.  We have thus derived a sequence of partitions with the required properties. 
\end{proof}

\begin{proof}[Proof of Lemma \ref{lem:converge}]
We prove it for $\kappa_L^+$ and it is analogous for $\kappa_L^-$.
We first show the uniform convergence of $\kappa_L^+$ to $\kappa$. Recall by the construction of the partition $\{\cS_L^l\}_{l\in[L]}$ and $\kappa_L^+$, we have
\begin{align}
\norm{\kappa^+_L - \kappa}_\infty
    &= \sup_{s, s' \in \cS} \abs{\kappa^+_L(s, s') - \kappa(s,s')}\\
    &= \sup_{s, s' \in \cS}  \abs{\sum_{l,l' \in [L]} I_{\cS_L^l}(s) I_{\cS_L^{l'}}(s') \sup_{x \in \cS^l, y \in \cS^{l'}} \kappa(x,y) - \kappa(s, s')}.\\
\end{align}
Let $(s_1, s_2)$ be a point where the supremum is reached, and $l_1, l_2 \in [L]$ such that $s_1 \in \cS_L^{l_1}$ and $s_2 \in \cS_L^{l_2}$. It follows
\begin{align}
\norm{\kappa^+_L - \kappa}_\infty
    = \abs{ \sup_{x \in \cS_L^{l_1}, y \in \cS_L^{l_2}} \kappa(x,y) - \kappa(s_1, s_2)} 
    \le \sup_{x, x' \in \cS_L^{l_1}, y, y' \in \cS_L^{l_2}} \abs{ \kappa(x,y) - \kappa(x', y')}.
\end{align}
By Assumption~\ref{ass:kernel} on continuity of $\kappa$, for all $\epsilon >0$, there exists $\delta >0$ such that $\abs{\kappa(x,y) - \kappa(x', y')} < \epsilon$ for $d(x,x') < \delta$ and $d(y, y') < \delta$. According to Lemma~\ref{prop:partition}, it holds that $\mathrm{diam}(\cS_L^{l_1}),\mathrm{diam}(\cS_L^{l_2}) \leq \delta$ for $L$ large and the result follows. Therefore, as $L \rightarrow \infty$, we have the following inequalities and convergences:
\begin{align}
\norm{\Lambda_{\kappa^+_L} f_L - \Lambda_{\kappa}f}_\infty
    &= \norm{\Lambda_{\kappa^+_L} f_L - \Lambda_{\kappa} f_L + \Lambda_{\kappa} f_L - \Lambda_{\kappa}f}_\infty \\
    &\le \norm{\Lambda_{\kappa^+_L} f_L - \Lambda_{\kappa} f_L}_\infty + \norm{\Lambda_{\kappa} f_L - \Lambda_{\kappa}f}_\infty \\
    &=\norm{\int_{s \in \cS} \(\kappa^+_L(s, \cdot) - \kappa(s, \cdot)\)f_L(s) \dd \mu(s)}_\infty \\
    &\quad + \norm{\int_{s \in \cS} \kappa(s, \cdot) (f_L(s) - f(s)) \dd \mu(s)}_\infty \\
    &\le \int_{s \in \cS} \norm{\kappa^+_L - \kappa}_\infty f_L(s) \dd \mu(s) +  M_{\kappa} \abs{\int_{s \in \cS} (f_L(s) - f(s)) \dd \mu(s)}\\
    &\rightarrow 0, \\
\norm{P_{\kappa^+_L}^k f_L - P_{\kappa}^k f}_\infty
    &= \norm{\frac{(\Lambda_{\kappa^+_L} f_L)^k}{k !} e^{-\Lambda_{\kappa^+_L} f_L} - \frac{(\Lambda_{\kappa} f)^k}{k !} e^{-\Lambda_{\kappa} f}}_\infty \\
    &\le  e^{-\Lambda_{\kappa^+_L} f_L} \norm{\frac{(\Lambda_{\kappa^+_L} f_L)^k}{k !} - \frac{(\Lambda_{\kappa} f)^k}{k !}}_\infty + \frac{(\Lambda_{\kappa} f)^k}{k !} \norm{e^{-\Lambda_{\kappa^+_L} f_L} - e^{-\Lambda_{\kappa} f}}_\infty\\
    &\le \frac{(1+M_{\kappa})M_{\kappa}^{k-1}}{(k-1)!}\norm{\Lambda_{\kappa^+_L} f_L - \Lambda_{\kappa}f}_\infty\\
    &\rightarrow 0,
\end{align}
where the inequalities for $P_{\kappa}^k$ are constructed in the same fashion as in the proof of Lemma \ref{prop:psi_lipschitz}. Thus, the convergence of $\Psi_{\kappa}$ and its Fr\'echet derivative can be concluded as follows:
\begin{align}
\norm{\Psi_{\kappa_L^+} f_L - \Psi_{\kappa} f}_\infty
    &= \norm{\sum_{k=0}^\infty  \eta_k(\cdot) \sum_{k'=0}^{k-1}(P_{\kappa}^{k'}f - P_{\kappa^+_L}^{k'}f_L)}_\infty \\
    &\le \sum_{k=0}^\infty \sum_{k'=0}^{k-1} \norm{P_{\kappa}^{k'}f - P_{\kappa^+_L}^{k'}f_L}_\infty \\
    &\le (1+M_{\kappa})M_{\kappa} e^{M_{\kappa}}\norm{\Lambda_{\kappa^+_L} f_L - \Lambda_{\kappa}f}_\infty\\
    &\rightarrow 0, \\
\norm{D \Psi_{\kappa_L^+} f_L [h] - D\Psi_{\kappa} f [h]}_\infty 
    &= \norm{\Lambda_{\kappa^+_L} [h] V[f_L]  - \Lambda_{\kappa} [h] V[f]}_\infty\\
    &\le  V[f_L]\norm{\Lambda_{\kappa^+_L}[h] - \Lambda_{\kappa}[h]}_\infty + \Lambda_{\kappa} [h] \norm{V[f_L] - V[f]}_\infty\\
    &=  V[f_L]\norm{\Lambda_{\kappa^+_L}[h] - \Lambda_{\kappa}[h]}_\infty + \Lambda_{\kappa} [h] \norm{\sum_{k=0}^\infty P_{\kappa}^k f_L - P_{\kappa}^k f}_\infty\\
    &\le V[f_L]\norm{\Lambda_{\kappa^+_L} f_L - \Lambda_{\kappa}f}_\infty + \Lambda_{\kappa} [h] \norm{\Psi_{\kappa_L^+} f_L - \Psi_{\kappa} f}_\infty\\
    &\rightarrow 0.
\end{align}
\end{proof}

\begin{proof}[Proof of Lemma~\ref{lem:fp_conv}]
We carry out the proof for the fixed points $\{\hat{f}^-_L\}_{L \ge 1}$ but the same arguments hold true for $\{\hat{f}^+_L\}_{L \ge 1}$ as well. We first show the existence of least fixed points $\hat{f}_L^-$ of $\Psi_{\kappa^-_L}$. Let 
$$\mathcal{H}^L :=\left \{f: \cS \ni s \mapsto \sum_{l \in [L]} c^l I_{\cS_L^l}(s), c^l \in [0,1] \right \} ,$$ 
be the set of step functions based on the partition $\{\cS_L^l\}_{l \in [L]}$. It is easy to verify that $\Psi_{\kappa^-_L}$ actually maps functions from $\cF_b$ to $\mathcal{H}^L$. Thus, the fixed points, if any, are elements in $\mathcal{H}^L$. We now use a similar argument as in the proof of Lemma \ref{prop:fp_exist}. It is clear that $\mathcal{H}^L$ is partially ordered with the pointwise relation ''$\leq$''. For each subset of $\mathcal{H}^L$, by taking the infimum and the supremum of the constant steps $c^l$, one observes that each subset of $\mathcal{H}^L$ has an infimum and supremum defined, which is an element of $\mathcal{H}^L$. Thus, $(\mathcal{H}^L, \le)$ forms a complete lattice. Since the $\Psi_{\kappa^\pm_L}$ are order-preserving, by the Knaster-Tarski Theorem their least fixed point $\hat{f}_L^\pm$ exists in $\mathcal{H}^L$.

 For the convergence of the fixed points, we will first show that $\{\hat{f}^-_L\}_{L \ge 1}$ is an increasing sequence. Note that for $\kappa_1 \le \kappa_2$ it holds that $\Psi_{\kappa_1} \le \Psi_{\kappa_2}$ point wise, so it follows that $\{\Psi_{\kappa^-_L}\}_{L \ge 1}$ is increasing for the increasing kernel sequence $\{\kappa^-_L\}_{L \ge 1}$. 

We now show that for a function $g_{L+1} \in \mathcal{H}^{L+1}$ which is a fixed point of $\Psi_{\kappa^-_{L+1}}$ it holds that $g_{L+1} \geq \hat{f}_L^-$. Recall that $\{\cS_{L+1}^l\}_{l \in [L+1]}$ is a refinement of $\{\cS_L^l\}_{l \in [L]}$, so for each subset $\cS_L^l$ with $l \in [L]$, we can find an index set $J(l)$ such that  $\cup_{j \in J(l)} \cS_{L+1}^{j} = \cS_L^l$. Recall from above that the fixed point of $\Psi_{\kappa^-_L}$ is a step function and can therefore be expressed as $\hat{f}^-_L = \sum_{l \in [L]} c_f^l I_{\cS^l}(s)$ for constants $\{c_f^l\}_{l \in [L]}$. 
Similarly, 
$$ g_{L+1}(s) = \sum_{l \in [L+1]} c_g^l I_{\cS_{L+1}^l}(s),$$ with constants $\{c_g^l\}_{l \in [L+1]}$. Using the disjoint index sets $\{J(l) \}_{l\in L}$ this sum can be rewritten as
$$g_{L+1}(s) = \sum_{l \in [L]} \sum_{j \in J(l)} c_g^{j} I_{\cS_{L+1}^{j}}(s).$$
Define now the function $\underline{g} \in \mathcal{H}^{L}$ by $$\underline{g} := \sum_{l \in [L]} \min\{c_f^l, \min_{j \in J(l)} \{c_g^{ j}\}\} I_{\cS_L^{l}}(s).$$ 
It follows that $\underline{g} \le g_{L+1}$ and $\underline{g} \le \hat{f}^-_L$. 

Now let us assume that there exists $s^*\in \cS$ such that $g_{L+1}(s^*) < \hat{f}_L^- (s^*)$. It follows then that $\underline{g} < \hat{f}_L^-$ and because $\hat{f}_L^-$ is the first fixed point of $\Psi_{\kappa^-_L}$, a similar reasoning as used in \cite[Lem 3.5]{BichuchDetering2021}) allows to conclude that there must exist an $j^*\in [L]$ such that 
\begin{equation}
\label{pre:fixed:point:pos}
 \Psi_{\kappa^-_L} \underline{g}(s) - \underline{g}(s) >0   
\end{equation}
for $s\in \cS_L^{j^*}$. Note that not necessarily $s^*\in \cS_L^{j^*}$. In fact \eqref{pre:fixed:point:pos} implies that $\underline{g}(s) < \hat{f}_L^-(s)$ for $s\in \cS_L^{j^*}$. To see this, let $I$ be the (possibly empty) set of indices $i\in [L+1]$ such that $g_{L+1}(s) \geq \hat{f}_L^-(s)$ for $s \in \cS_L^i$. Let $\cS_L^I = \cup_{i \in I}\cS_L^i$. For all $s \in \cS_L^I$, it follows that $\underline{g}(s) = \hat{f}^-_L(s) \leq g_{L+1}(s)$ by construction of $\underline{g}$. It further holds that $\Psi_{\kappa^-_{L+1}}\underline{g}(s) - \underline{g}(s) \le 0$ as otherwise increasing the value of $\underline{g}(s)$ to $\hat{f}_L^-(s)$ on $\cS \backslash \cS_L^I$ leads to $\Psi_{\kappa^-_{L+1}}\hat{f}_L^-(s) -\hat{f}_L^-(s) > 0$ on $\cS_L^I$ by the monotonicity properties of $\Psi_{\kappa^-_{L+1}}$ (see Lemma \ref{prop:psi_mono}), which contradicts to that $\hat{f}_L^-$ is the fixed. Therefore $\underline{g}(s) \le g_{L+1}(s) < \hat{f}_L^-(s)$ for $s\in \cS_L^{j^*}$.

Now we can find an index $i^* \in J(j^*)$ such that $\underline{g}(s) = g_{L+1}(s)$ for $s\in \cS_{L+1}^{i^*}$. Combining this with the fact that $\{\Psi_{\kappa^-_L}\}_{L \ge 1}$ is increasing, we obtain
\begin{align}\label{eq:psi_lplus1}
\Psi_{\kappa^-_{L+1}}g_{L+1}(s) - g_{L+1}(s) \ge \Psi_{\kappa^-_L} \underline{g}(s) - \underline{g}(s) >0 \text{ for } s\in \cS_L^{j^*}.
\end{align}
Hence, $g_{L+1}$ can only be a fixed point of $\Psi_{\kappa^-_{L+1}}$ if $\hat{f}^-_L\leq  g_{L+1}$. 

We conclude that $\{\hat{f}^-_L\}_{L \ge 1}$ is a bounded and increasing sequence. By the monotone convergence theorem, for all $s\in\cS$ there exists the pointwise limit $\hat{f}^-(s) := \lim_{L\rightarrow \infty}\hat{f}^-_L(s)$. Then by Lemma \ref{lem:converge}, we have that $\Psi_{\kappa_L^\pm} \hat{f}_L^- \rightarrow \Psi_{\kappa} \hat{f}^-$  uniformly. 

Collecting what we have shown so far we get that 
$$\hat{f}^- = \lim_{L \rightarrow \infty} \hat{f}_L^- =  \lim_{L \rightarrow \infty}\Psi_{\kappa_L^\pm} \hat{f}_L^- = \Psi_{\kappa} \hat{f}^-$$ and hence $\hat{f}^-$ is a fixed point of $\Psi_{\kappa}$. Moreover, $\hat{f}^-$ is indeed the least fixed point, $\hat{f}^- \equiv \hat{f}$. It follows from the fact that the sequence $\{\hat{f}_L^-\}$ is bounded by $\hat{f}$. To see this, assume there exists an $L^*$ such that $\hat{f}_{L^*}^- > \hat{f}$. Then it follows in particular that $\hat{f}_{L}^- \geq \hat{f}_{L^*}^-  > \hat{f}$ for $L\geq L^*$. Let $g = \hat{f} + \varepsilon h < \hat{f}_{L^*}^-$ for a small enough $\varepsilon$. From the condition on the Fr\'echet derivative of $\Psi_{\kappa}$ it follows that $\Psi_{\kappa}g - g < 0$. As $\kappa^-_{L} \le \kappa$, we obtain that $\Psi_{\kappa^-_{L}}g - g < 0$. Approximating $g$ with step functions $g_{L}$ on $\{\cS^l_L \}_{l\in L}$, it follows for $L$ large that also $\Psi_{\kappa^-_{L}}g_L - g_L < 0$. This implies that $\hat{f}_{L}^-$ cannot be the least fixed point of $\Psi_{\kappa^-_{L}}$ (compare again \cite[Lem 3.5]{BichuchDetering2021}).

Recalling convergence of the operator, we see that $\lim_{L \rightarrow \infty} \hat{f}_L^- =  \lim_{L \rightarrow \infty}\Psi_{\kappa_L^-} \hat{f}_L^- = \Psi_{\kappa} \hat{f} = \hat{f}$, and we obtain the uniform convergence of the least fixed points. Finally, It follows directly that $\int_{\cS}\hat{f}^-_L \dd \mu \rightarrow \int_{\cS} \hat{f} \dd \mu$ and again by Lemma \ref{lem:converge} that $D \Psi_{\kappa^-_L} \hat{f}_L^- \rightarrow  D \Psi_{\kappa}\hat{f}$.
\end{proof}
\begin{proof}[Proof of Lemma \ref{prop:no_fix}]
We use contradiction. Assume there exists a fixed point $g$, $\Psi [g] = g$, such that $g<a_0 h$ holds in at least a nonempty subset of $\cS$. Consider $\hat{a} = \sup_{a\in[0, a_0]}\{a h(s) \le g(s) \ \forall s \in \cS \}$ and let $\hat{s} \in \cS$ be one point where $\hat{a} h(\hat{s}) = g(\hat{s})$. We obtain that $g(s) \ge \hat{a}h(s)$ for $s \in \cS\backslash \{\hat{s}\}$. Then by the order preserving property of $\Psi$ (Lemma \ref{prop:psi_mono}), $\Psi[g](\hat{s}) \ge \Psi[\hat{a}h](\hat{s})$. Given the condition $\Psi [a h] > a h$ for $a \in [0, a_0]$, we have $\Psi[g](\hat{s}) - g(\hat{s}) \ge \Psi[\hat{a}h](\hat{s}) - \hat{a}h(\hat{s}) > 0$. In other words, $\Psi [g] \neq g$ at the point $\hat{s}$, which is in contradiction with $g$ being a fixed point.
\end{proof}

\setcitestyle{numbers}
\bibliographystyle{chicago}
\bibliography{bib}

\end{document}